\numberwithin{equation}{section}
\newtheorem{thm}{Theorem}[section]
\newtheorem{prop}[thm]{Proposition}
\newtheorem{lem}[thm]{Lemma}
\newtheorem{cor}[thm]{Corollary}
\newtheorem{rem}[thm]{Remark}
\newtheorem{defn}[thm]{Definition}
\newcommand{\ap}[1]{\left\langle#1\right\rangle}
\def\grad{\nabla}
\DeclareMathOperator{\supp}{supp}
\def\der{\mathrm{d}}
\def\p{\partial}
\newcommand{\be}{\begin{equation}}
\newcommand{\ee}{\end{equation}}
\newcommand{\bes}{\begin{equation*}}
\newcommand{\ees}{\end{equation*}}
\newcommand{\norm}[1]{\left\Vert#1\right\Vert}
\def\st{\, \left|\right. \,}
\newcommand{\mt}[1]{\mathrm{#1}}
\def\R{\mathbb{R}}
\def\M{M}
\def\N{N}
\def\S{{\mathcal{D}_{\e}}}
\def\V{v}
\def\c{c}
\def\C{C}
\def\Lip{\Lambda} 
\def\PsiX{\Psi^t_X}
\def\PsiY{\Psi^t_Y}
\def\KU{K_{_U}}
\def\dRQ{\theta_{_{RQ}}}
\def\G{h}
\def\a{a}
\def\b{b}
\def\dim{n}
\def\tf{\phi}
\def\P{\mathcal{P}} 
\def\Cont{\mt{C}} 
\def\bd{\mathcal{W}}
\def\I{\mathcal{I}}
\def\U{\mathcal{U}}
\def\Q{\mathcal{Q}}
\def\calS{\mathcal{S}}
\def\calR{\mathcal{R}}
\newcommand{\bfv}{\mathbf{v}}
\def\:{\colon}
\def\d{\,\mathrm{d}}
\DeclareMathOperator{\Hess}{Hess}
\newcommand{\e}{\varepsilon}
\begin{document}
\title{An intrinsic aggregation model on the special orthogonal group $\mathrm{SO}(3)$: well-posedness and collective behaviours}

\author{Razvan C. Fetecau \thanks{Department of Mathematics, Simon Fraser University, Burnaby, BC V5A 1S6, Canada}
\and Seung-Yeal Ha \thanks{Department of Mathematical Sciences and Research Institute of Mathematics, Seoul National University, Seoul 08826 and Korea Institute for Advanced Study, Seoul 02455, Republic of Korea}
\and Hansol Park   \thanks{Department of Mathematical Sciences, Seoul National University, Seoul 08826, Republic of Korea}}
%\email{van@math.sfu.ca}
%\email{syha@snu.ac.kr}
%\email{hansol960612@snu.ac.kr}

\date{\today}

\maketitle \centerline{\date}

\begin{abstract}
We investigate an aggregation model with intrinsic interactions on the special orthogonal group $SO(3)$. We consider a smooth interaction potential that depends on the squared intrinsic distance, and establish local and global existence of measure-valued solutions to the model via optimal mass transport techniques. We also study the long-time behaviours of such solutions, where we present sufficient conditions for the formation of asymptotic consensus. The analytical results are illustrated with numerical experiments that exhibit various asymptotic patterns.
\end{abstract}

\textbf{Keywords}: asymptotic consensus, intrinsic interactions, measure solutions, particle methods,  swarming on manifolds.

\textbf{AMS Subject Classification}: 35A01, 35B40, 37C05, 58J90

%\tableofcontents

%%%%%%%%%%

\section{Introduction} We consider an aggregation model on a Riemannian manifold $\M$, given by:
\begin{equation}
\begin{cases}\label{eqn:model}
\displaystyle \partial_t\rho+\nabla_\M\cdot(\rho v)=0,\\
\displaystyle v=-\nabla_\M K*\rho,
\end{cases}
\end{equation}
where $K\: \M\times \M \to \R$ represents an interaction potential, and $\nabla_\M \cdot$ and $\nabla_\M $ denote the manifold divergence and gradient, respectively. The interaction potential $K$ is typically assumed to model short-range repulsive and long-range attractive interactions. Also, we use the symbol $\ast$ to denote a generalized convolution in the following sense: for a time-dependent measure $\rho_t$ on $\M$, the convolution $K \ast \rho_t$ is defined by
\begin{equation} \label{eqn:conv}
	K * \rho_t(x) = \int_\M K(x,y) \d \rho_t(y).
	%\int_M K(x,y) \rho(y,t)  \d \mu (y),
\end{equation}
In this paper we restrict $\rho_t$ to be a probability measure on $\M$, i.e., $\int_\M  \d \rho_t=1$ for all $t$.

Model \eqref{eqn:model} has been extensively studied in recent years, with the vast majority of these works concerning its Euclidean setup ($\M=\R^\dim$ with standard Euclidean metric). There exists a large body of literature on the mathematical analysis of solutions to model \eqref{eqn:model} in $\R^\dim$, which addresses the well-posedness of the initial-value problem  \cite{BertozziLaurent, Figalli_etal2011, BeLaRo2011}, the long time behaviour of solutions \cite{LeToBe2009, FeRa10, BertozziCarilloLaurent, FeHuKo11,FeHu13}, and the existence and characterization of minimizers for the associated interaction energy \cite{BaCaLaRa2013, Balague_etalARMA, ChFeTo2015}. At the same time a remarkable attention has been directed to the numerous applications of model \eqref{eqn:model}, e.g., biological swarms \cite{M&K}, material science and granular media \cite{CaMcVi2006}, self-assembly of nanoparticles \cite{HoPu2005}, robotics \cite{Gazi:Passino,JiEgerstedt2007} and opinion formation \cite{MotschTadmor2014}. Various qualitative features of swarm or self-organized behaviour have been captured with this class of models. Some inspiring collections of equilibria that can be obtained with this model can be found in \cite{KoSuUmBe2011, Brecht_etal2011} for instance, including aggregations on disks, annuli, rings, and soccer balls. 

Despite its remarkable potential for analysis and applications, literature on the aggregation model posed on surfaces or more general manifolds is far more limited. Before we review some of this literature, we want to distinguish two main classes of models that can be considered on manifolds. The first class consists of {\em extrinsic} models, which rely on a certain embedding of the manifold $\M$ in an ambient Euclidean space $\R^\dim$. In such case, the interaction potential $K(x,y)$ is taken to be of form $K(x,y) = K(|x- y|)$, where $|x-y|$ denotes the Euclidean distance in $\R^\dim$ between points $x$ and $y$ on $\M$. The other class is made of {\em intrinsic} models, which depend only on the intrinsic geometry of the manifold. For these models, the interaction potential is of the form $K(x,y) = K(d(x,y))$, where $d(x,y)$ is the geodesic distance on $\M$ between $x$ and $y$. In other words, models in the two classes consider extrinsic versus intrinsic interactions, respectively.

To elaborate on the type of interactions a little bit further, the interactions are encoded in the interaction potential $K$, more specifically in its gradient. For an extrinsic model, $\nabla_\M K(x,y) =K'(|x-y|) \nabla_{\M} |x-y|$, where $ \nabla_{\M} |x-y|$ can be found by projecting the Euclidean gradient $\nabla |x-y|$ on the tangent space of $\M$ at the point $x$ (note that these gradients are taken with respect to $x$ for $y$ fixed). On the other hand, for an intrinsic model, $\nabla_\M K(x,y)=K'(d(x, y)) \nabla_\M d(x, y)$, where $ \nabla_\M d(x, y) = -\frac{\log_x y}{d(x,y)}$ is expressed in terms of the Riemannian logarithm map on $\M$\cite{Petersen2006}. In particular, in intrinsic models, points $x$ and $y$ interact along the length minimizing geodesic curve between the two points, as opposed to interacting along the straight line connecting them in the ambient space, as for extrinsic models. In regions of high curvature, the distinction between the two types of interactions can be significant. Indeed, points that are close in Euclidean distance may be far apart in geodesic distance, and consequently, such points may repel each other in an extrinsic model (due to short-range repulsion), while they could attract themselves in an intrinsic model (by long-range attraction). The intrinsic approach appears more robust and more appropriate for applications. For example, consider applications in biology or engineering (robotics), where individuals/robots are restricted by environment or mobility constraints to remain on a certain manifold \cite{LiSpong2014,Li2015}. In such case, an efficient swarming must take into account inter-individual geodesic distances, and hence, intrinsic interactions.

Model \eqref{eqn:model} with extrinsic interactions has been studied in several works recently.  In \cite{WuSlepcev2015,CarrilloSlepcevWu2016}, the authors investigate the well-posedness of the aggregation model \eqref{eqn:model} on full-dimensional subsets of $\R^\dim$. Recently, several extrinsic Lohe-type models on the unit sphere, matrix manifolds and tensor spaces with the same rank and size were proposed in \cite{HaKiLeNo2019, HaKim2019, HaKoRy2017,HaKoRy2018,HaPark2020}. These models can be formulated as gradient flows for the square of the Frobenius norm of the average state. Note that the Frobenius metric is an extrinsic metric which can be obtained by embedding the given manifold into a larger Euclidean space. The emergent dynamics in such models has been studied extensively, and several sufficient frameworks for complete consensus and practical consensus were proposed. The proposed frameworks were formulated in terms of initial data and system parameters. 

The intrinsic model was investigated in \cite{FeZh2019}, with a focus on the emergent behaviour of its solutions on sphere and hyperbolic plane. It was shown there that solutions can approach asymptotically a diverse set of equilibria, such as constant density states, concentrations on geodesic circles, and aggregations on geodesic disks and annular regions. The well-posedness and asymptotic behaviour of  solutions to the intrinsic model on sphere was studied recently in \cite{FePaPa2020}. Applications that use the intrinsic properties of surfaces and manifolds have also been considered in the context of Cucker-Smale type models, another class of models for collective behaviours. Such models are second-order, as they are written in Newton's second law form. Cucker-Smale type models have been formulated and investigated recently on Riemannian manifolds, including  the unit sphere and hyperboloid, in \cite{AhnHaShim-1-2020, AhnHaShim-2-2020, HaKimSchloder2020}.
 
In this paper, we are exclusively concerned with the aggregation model set up on the $3$-dimensional special orthogonal group, that is, we take $\M = SO(3)$. The motivation for this choice lies in applications of the model in engineering, more specifically in robotics. Note that $SO(3)$ is the configuration space of a rigid body in $\R^3$ that undergoes rotations only (no translations). A group of robots engaged in self-organization by attractive/repulsive interactions can be modelled by the discrete/ODE analogues of \eqref{eqn:model} on the rotation group. An interesting engineering application for instance is to estimate the average pose of an object viewed by a network of cameras  \cite{TronVidalTerzis2008}. For the desired swarming behaviour, engineering works have focused mostly on two types of configurations: consensus (or synchronized) and anti-consensus (or balanced) states. The former type corresponds to a configuration where all agents occupy the same location (delta aggregation at a single point). Both extrinsic and intrinsic algorithms have been proposed and studied for achieving consensus on $SO(3)$ \cite{TronAfsariVidal2012, Markdahl2019}. The latter type of configurations corresponds to a group of robots well-distributed over a region/area, so that it achieves an optimal coverage needed for surveillance/tracking (the coverage problem) \cite{Sepulchre2011}. In this paper we will investigate in detail the first type of behaviour (consensus) in model \eqref{eqn:model} on $SO(3)$.
 
The goal of the present paper is two-fold. First, we establish the local and global well-posedness of solutions to model \eqref{eqn:model} on $SO(3)$. In this aim, we work with the geometric interpretation of model \eqref{eqn:model} as a continuity equation and consider weak, measure-valued solutions defined in the optimal mass transportation sense \cite{CanizoCarrilloRosado2011}. In geometric terms, model \eqref{eqn:model} represents the transport of the measure $\rho$ along the flow on $M$ generated by the tangent vector field $\V$, which depends on $\rho$ itself \cite{AGS2005}. This general framework enables us to include the discrete particle system as a particular case, and also study particle approximations and mean-field limits. The main result in this paper is Theorem \ref{thm:well-posedness}, which establishes the local well-posedness of solutions to model \eqref{eqn:model} on the rotation group. We also show in Theorem \ref{prop:inv-cont} that for purely attractive interaction potentials, solutions can be extended globally in time. A major aspect in this analysis lies in the regularity of the distance function, which is known to be non-smooth at the cut locus. For this reason we restrict the analysis to subsets of $SO(3)$ of diameter less than $\pi$, the injectivity radius of the rotation group. In particular, any pair of points in such subsets can be connected by a unique minimizing geodesic, ruling out ambiguities on how intrinsic interactions are defined. 

The second goal of the present work is to investigate the long-time behaviour of the solutions, in particular, the emergence of asymptotic consensus in model \eqref{eqn:model} on $SO(3)$. In literature, achieving such a state is also referred to as synchronization or rendezvous. As noted above, this represents an important problem in robotic control  \cite{Sepulchre2011,TronAfsariVidal2012, Markdahl2019}. Consensus states(or phase-locked states) have also been investigated for the Kuramoto oscillator and related models in \cite{ChiChoiHa2014, ChoiHaJungKim2012, ChopraSpong2009, HaKiPa2015, HaKimRyoo2016,HaKoRy2017,HaKim2019, Lohe2010, Lohe2009}. For surveys on related topics, we refer to \cite{DorflerBullo2014, HaKoRarkZhang2016} and references therein. For the applications of the model to opinion formation, we refer to \cite{MotschTadmor2014}. We will prove the formation of asymptotic consensus for the continuum model \eqref{eqn:model} on $SO(3)$ (Theorem \ref{thm:consensus-cont}), as well as refine the result for the specific case of the discrete model (Theorems \ref{thm:consensus-d1} and \ref{thm:consensus-d2}). We also present some numerical explorations of long-time behaviour and equilibrium solutions.
 
The rest of the paper is organized as follows. In Section \ref{sect:prelims}, we present some preliminaries, and set the notion of the solution and the assumptions on the interaction potential $K$. In Section \ref{sect:SO3}, we briefly discuss necessary background on the rotation group as a Riemannian manifold; in particular we present concepts such as geodesics and exponential/logarithm maps. In Section \ref{sect:well-posedness}, we establish the local well-posedness of solutions to model \eqref{eqn:model} on $SO(3)$, as well as their stability and mean-field approximation. In Section \ref{sect:global-cons}, we investigate the formation of asymptotic consensus for solutions to model \eqref{eqn:model} on the rotation group, for both the continuum and discrete formulations. In Section \ref{sect:numerics}, we present several numerical results. Finally, the Appendix is devoted to some concepts and results used in the main body of the paper to show the well-posedness and asymptotic behaviour.

%%%%%%%%%

\section{Preliminaries and general considerations}
\label{sect:prelims}
In this section, we present some background on flows on manifolds and Wasserstein distances, and then we introduce the notion of the measure-valued solution for model \eqref{eqn:model} and set up the assumptions on the interaction potential.

\paragraph{Flows on manifolds.} We briefly present  
some general facts for flows on manifolds. Although these facts hold for general manifolds, we restrict our discussion to $\M=SO(3)$. Denote by $\U$ a generic open subset of $SO(3)$, and consider a time-dependent vector field $X(R, t)$ on $\U\times [0, \a)$, for some $\a \in(0,\infty]$, i.e. $X_t(R):=X(R, t)\in T_R SO(3)$ for all  $(R,t)\in \U\times [0,\a)$. 

Given $\Sigma \subset \U$, a \emph{flow map} generated by $(X,\Sigma)$ is a function $\Psi_X\: \Sigma \times [0,\tau) \to \U$, for some $\tau\leq \a$, that satisfies:
\begin{equation} \label{eq:characteristics-general}
	\begin{cases} \dfrac{\der}{\der t} \Psi^t_X(R) = X_t(\Psi^t_X(R)),\\[5pt]
	\Psi^0_X(R) = R, \end{cases}
\end{equation}
for all $R\in\Sigma$ and $t\in[0,\tau)$, where we used the notation $\Psi^t_X$ for $\Psi_X(\cdot,t)$. A flow map is said to be \emph{maximal} if its time domain cannot be extended. Also, it is said to be \emph{global} if $\tau=\a=\infty$ and \emph{local} otherwise.
%Furthermore, a vector field is said to be {\em complete} if it generates a global flow, that is, its flow map is defined globally, for all $x \in \M $ and $t\in \R$. 

In the context of the present paper, the flow maps are generated by the velocity field $v[\rho]$ of the interaction equation (see \eqref{eqn:v-field} below), where $\Sigma$ is the support of the initial measure $\rho_0$. To simplify the terminology, unless there is potential for confusion, we will simply say that $v[\rho]$, instead of $(v[\rho],\supp(\rho_0))$, generates a flow map.

The local and global well-posedness of flow maps are covered by the standard theory of dynamical systems on manifolds; see \cite[Chapter~12]{Lee2013} or \cite[Chapter~4]{AMR1988} for instance. In the Appendix, we present briefly the results which we will need for our study. To establish the local well-posedness one needs to work in charts and make use of standard ODE theory in Euclidean spaces (see Theorem \ref{thm:Cauchy-Lip}). Note that here $\Sigma$ is assumed to be compact, as required for the maximal time of existence of the flow map to be strictly positive. We also present a global version of the Cauchy-Lipschitz theorem to be used in Section \ref{sect:global-cons}.
% \ref{subsect:A-wp} \ref{subsect:A-cont}

%%%

\paragraph{Notion of a solution.} In this paper we will interpret a solution $\rho_t$ of \eqref{eqn:model} as the push-forward of the initial density $\rho_0$ along the flow map generated by $\rho_t$ itself. To keep solutions as general as possible, we work with measure-valued densities; this framework will enable us to consider particle solutions and recover the discrete version of the model \eqref{eqn:model}. 

We denote by $\P(\U)$ the set of Borel probability measures on the metric space $(\U,d)$ and by $\Cont([0,T);\P(\U))$ the set of continuous curves from $[0,T)$ into $\P(\U)$ endowed with the narrow topology. Recall that a sequence $(\rho_n)_{n\geq 1} \subset \P(\U)$ \emph{converges narrowly} to $\rho \in \P(\U)$ if 
\bes
	\int_\U \tf(x) \d\rho_n(x) \to \int_\U \tf(x) \d\rho(x), \qquad \mbox{as $n\to\infty$, for all $\tf \in\Cont_\mt{b}(\U)$,}
\ees
where $\Cont_\mt{b}(\U)$ is the set of continuous and bounded functions on $\U$. 

We denote by $\Psi \# \rho$ the \emph{push-forward} in the mass transportation sense of $\rho$ through a map $\Psi\: \Sigma \to \U$ for some $\Sigma\subset \U$. Hence, $\Psi\#\rho$ is a probability measure on $\U$ such that for every measurable function $\zeta\: \U \to [-\infty,\infty]$ with $\zeta\circ \Psi$ integrable with respect to $\rho$, it holds that:
\bes
	\int_\U \zeta(x) \d (\Psi \# \rho)(x) = \int_\Sigma \zeta(\Psi(x)) \d\rho(x).
\ees

To recast model \eqref{eqn:model} in terms of transport along flow maps, we define for any curve $(\rho_t)_{t\in [0,T)} \subset \P(\U)$, the velocity vector field $\V[\rho]\: \U \times [0,T) \to T SO(3)$ associated to \eqref{eqn:model}, that is,
\begin{equation} \label{eqn:v-field}
	\V[\rho] (R,t) =  -\grad K *\rho_t (R) = \int_\U \grad K(R,Q) \d\rho_t(Q), 
\end{equation}
for all $(R,t) \in \U \times [0,T)$. For simplicity of notation, we have dropped the subindex $\M=SO(3)$ on $\nabla_M$. From here on, unless otherwise specified, $\nabla$ denotes the intrinsic (manifold) gradient on $SO(3)$. We also used $\rho_t$ in place of $\rho(t)$, as we shall often do in the sequel.  \newline

In this paper, we will adopt the following definition of weak (or measure-valued) solution of model \eqref{eqn:model} (see also \cite{CanizoCarrilloRosado2011}):
\begin{defn}[Weak solution]
\label{defn:solution}
	We say that $(\rho_t)_{t\in[0,T)} \subset \P(\U)$ is a \emph{weak solution} to \eqref{eqn:model} if $\V[\rho]$ generates a unique flow map $\Psi_{\V[\rho]}$ defined on $\supp(\rho_0)\times [0,T)$ and it holds that:
\be \label{eq:rho-push-forward}
	\rho_t = \Psi^t_{\V[\rho]} \# \rho_0, \qquad \mbox{for all $t\in[0,T)$}.
\ee
\end{defn}
It can be shown that a weak solution in the sense of Definition \ref{defn:solution} is also a distributional weak solution. In fact, following \cite[Lemma 8.1.6]{AGS2005} (see also \cite[Lemma 2.1]{FePaPa2020}), one has the following relation between a weak solution in the sense of \eqref{defn:solution} and a distributional weak solution.
\begin{lem}\label{lem:distrib-1}
Let $(\rho_t)_{t\in[0,T)} \subset \P(\U)$ be a weak solution of model \eqref{eqn:model} in the sense of Definition \ref{defn:solution}. In addition, we assume that $\V[\rho]$ satisfies
\bes
%\label{eq:bound-distrib}
	\int_\calS \int_\Q {\| \V[\rho] (R,t) \|} \d\rho_t(R) \d t < \infty, \quad \text{ for all compact sets $\calS\subset (0,T)$ and $\Q \subset \U$}.
\ees
Then, $\rho_t$ is a distributional weak solution to \eqref{eqn:model}, i.e.,
\begin{equation}
   \int_0^T \int_\U \left( \p_t \tf(R,t) + {\langle \V[\rho] (R,t), \grad \tf(R,t) \rangle}_R \right)  \d\rho_t(R) \d t = 0,~~\mbox{for all $\tf \in \Cont_\mt{c}^\infty(\U \times (0,T))$}.
\end{equation}
\end{lem}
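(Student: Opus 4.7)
The plan is to pass from the Eulerian formulation to the Lagrangian one by using the push-forward identity $\rho_t = \Psi^t_{v[\rho]}\#\rho_0$, and then recognize the integrand as a total time derivative along characteristics. Writing $\Psi^t$ for $\Psi^t_{v[\rho]}$ to ease notation, the definition of push-forward gives
\begin{equation*}
\int_\U\bigl(\p_t \tf(R,t)+\langle \V[\rho](R,t),\grad \tf(R,t)\rangle_R\bigr)\d\rho_t(R)
=\int_{\supp(\rho_0)}\bigl(\p_t \tf(\Psi^t(R),t)+\langle \V[\rho](\Psi^t(R),t),\grad \tf(\Psi^t(R),t)\rangle_{\Psi^t(R)}\bigr)\d\rho_0(R).
\end{equation*}
Since $\Psi^t$ satisfies the characteristic ODE $\frac{\der}{\der t}\Psi^t(R)=\V[\rho](\Psi^t(R),t)$, the chain rule on the manifold identifies the bracketed expression with $\frac{\der}{\der t}\tf(\Psi^t(R),t)$.

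Next I would integrate in $t$ and invoke Fubini. Using that $\tf\in \Cont_\mt{c}^\infty(\U\times(0,T))$, there exist a compact $\calS\subset(0,T)$ and a compact $\Q\subset\U$ with $\supp(\tf)\subset\Q\times\calS$. The hypothesis on $\V[\rho]$ applied to this $\calS$ and $\Q$, together with the uniform bounds on $\tf$, $\p_t\tf$ and $\grad\tf$, yields the integrability needed to swap the order of integration. Then, by the fundamental theorem of calculus,
\begin{equation*}
\int_0^T \frac{\der}{\der t}\tf(\Psi^t(R),t)\d t = \tf(\Psi^T(R),T)-\tf(\Psi^0(R),0)=0,
\end{equation*}
because $\tf$ is compactly supported in $(0,T)$. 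Integrating against $\rho_0$ gives the desired distributional identity.

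The only mildly delicate point is the chain-rule step: one should check that $t\mapsto \tf(\Psi^t(R),t)$ is absolutely continuous (in fact $\Cont^1$, since $\Psi^t(R)$ is $\Cont^1$ in $t$ by \eqref{eq:characteristics-general} and $\tf$ is smooth), with derivative given by the expression above, where the pairing is with respect to the Riemannian metric at the point $\Psi^t(R)\in SO(3)$. This is a pointwise (in $R$) computation that reduces, in a local chart around $\Psi^t(R)$, to the standard Euclidean chain rule applied to $\tf$ composed with the coordinate expression of $\Psi^t$. Once this is in place, the Fubini step is routine thanks to the stated integrability hypothesis and the compact support of $\tf$, and the boundary terms vanish automatically, completing the argument.
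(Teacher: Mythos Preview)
Your argument is correct and is precisely the standard proof of this fact. The paper does not actually give its own proof of this lemma; it simply states the result and refers to \cite[Lemma~8.1.6]{AGS2005} and \cite[Lemma~2.1]{FePaPa2020}, where the argument is exactly the Lagrangian rewriting you carry out: push forward to $\rho_0$, use the chain rule along the characteristics $t\mapsto \Psi^t(R)$ to recognise a total time derivative, apply Fubini (justified by the integrability hypothesis and the compact support of $\tf$), and kill the boundary terms by compact support in time. One small notational quibble: since the flow map is only defined on $[0,T)$, writing $\tf(\Psi^T(R),T)$ is not quite licit; but as you note, $\supp(\tf)\subset \Q\times\calS$ with $\calS\subset(0,T)$ compact, so the $t$-integral is really over $\calS$ and the endpoints lie strictly inside $(0,T)$ where $\tf$ vanishes, so nothing changes.
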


%%%

\paragraph{Wasserstein distance.} We compare solutions to \eqref{eqn:model} using the intrinsic $1$-Wasserstein distance on the rotation group. For $\rho,\sigma \in \P(\U)$, the intrinsic $1$-Wasserstein distance is given by:
\bes
	W_1(\rho,\sigma) = \inf_{\pi \in \Pi(\rho,\sigma)} \int_{\U\times\U} d(R,Q) \d\pi(R,Q),
\ees
where $\Pi(\rho,\sigma) \subset \P(\U\times\U)$ is the set of transport plans between $\rho$ and $\sigma$, i.e., the set of elements in $\P(\U\times\U)$ with first and second marginals $\rho$ and $\sigma$, respectively. 

We note here that in general, for $1$-Wasserstein distances one needs to use the set of probability measures on $\U$ with finite first moment, denoted by $\P_1(\U)$. By compactness of the rotation group however, $\P_1(\U) = \P(\U)$, and hence $(\P(\U),W_1)$  is a well-defined metric space. We further metrize the space $\Cont([0,T);\P(\U))$ with the distance defined by
\bes
	\bd_1(\rho,\sigma) = \sup_{t \in [0,T)} W_1(\rho_t,\sigma_t), \qquad \mbox{for all $\rho,\sigma \in \Cont([0,T);\P(\U))$}.
\ees

The following lemma holds on general Riemannian manifolds, but we present it here for the rotation group $SO(3)$. It lists various Lipschitz properties of flows of probability densities on $\U$ (a generic open subset of $SO(3)$) with respect to the $1$-Wasserstein distance.
\begin{lem}
\label{lem:preliminary}
	The following four statements hold.
	\begin{enumerate}[label=(\roman*)]
		\item\label{it:prel1} Let $\Sigma\subset \U$, $\rho \in \P(\U)$ with $\supp(\rho) \subset \Sigma$ and $\Psi_1,\Psi_2\:\Sigma \to \U$ be measurable functions. Then,
			\bes
				W_1({\Psi_1}\#\rho,{\Psi_2}\#\rho) \leq \sup_{R \in \supp(\rho)} d(\Psi_1(R),\Psi_2(R)).
			\ees
		\item\label{it:prel2} Let $\a\in(0,\infty]$ and  $X$ be a time-dependent vector field on $\U\times[0,\a)$, and $\rho \in \P(\U)$.  Suppose $(X,\supp(\rho))$ generates a flow map $\Psi_X$ defined on $\supp(\rho)\times[0,\tau)$ for some $\tau\leq \a$ and $X$ is bounded on $\U\times [0,\tau)$, i.e., there exists $C>0$ such that $\norm{X(R,t)}_{R\in \U}<C$ for all $R\in \U$ and $t\in[0,\tau)$. Then,
			\bes
				W_1({\Psi^t_X} \# \rho,{\Psi^s_X} \# \rho) \leq C|t-s|, \quad \quad \mbox{for all $t,s \in [0,\tau)$}.
			\ees
		\item\label{it:prel3} Let $\Sigma\subset\U$ and $\Psi\: \Sigma \to \U$ be Lipschitz continuous as a map from the metric space $(\Sigma,d)$ into the metric space $(\U,d)$; denote by $L_\Psi$ its Lipschitz constant. Then, for any $\rho,\sigma \in \P(\U)$,
			\bes
				W_1(\Psi\#\rho,\Psi\#\sigma) \leq L_\Psi W_1(\rho,\sigma).
			\ees
	\end{enumerate}
\end{lem}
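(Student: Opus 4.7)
The plan is to prove all three items by constructing explicit transport plans tailored to each setting; this converts distance estimates on $\P(\U)$ into pointwise estimates on $\U$. No delicate optimality argument is needed, since the infimum definition of $W_1$ only requires producing \emph{some} admissible coupling.

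For part \ref{it:prel1}, I would introduce the map $\Phi\:\supp(\rho)\to \U\times\U$, $\Phi(R)=(\Psi_1(R),\Psi_2(R))$, and set $\pi=\Phi\#\rho$. A short check of marginals shows $\pi\in\Pi(\Psi_1\#\rho,\Psi_2\#\rho)$, and the change-of-variables formula gives
\bes
	W_1(\Psi_1\#\rho,\Psi_2\#\rho)\leq \int_{\U\times\U} d(R,Q)\d\pi(R,Q) =\int_{\U} d\bigl(\Psi_1(R),\Psi_2(R)\bigr)\d\rho(R),
\ees
and the final sup estimate follows because the integrand is bounded by the supremum over $\supp(\rho)$. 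For part \ref{it:prel3}, the analogous idea is to take an optimal plan $\pi_\ast\in\Pi(\rho,\sigma)$ (which exists by compactness of $\U$) and push it forward via the product map $(\Psi,\Psi)\:\Sigma\times\Sigma\to\U\times\U$; the resulting measure lies in $\Pi(\Psi\#\rho,\Psi\#\sigma)$, and the Lipschitz bound $d(\Psi(R),\Psi(Q))\leq L_\Psi d(R,Q)$ directly yields the inequality after integration.

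Part \ref{it:prel2} I would derive as a corollary of \ref{it:prel1} applied with $\Psi_1=\Psi^t_X$ and $\Psi_2=\Psi^s_X$, which reduces matters to bounding the geodesic distance $d(\Psi^t_X(R),\Psi^s_X(R))$ uniformly in $R\in\supp(\rho)$. For this I would use the defining ODE \eqref{eq:characteristics-general}: the curve $\tau\mapsto \Psi^\tau_X(R)$ is $C^1$ on $[0,\tau)$ with velocity $X_\tau(\Psi^\tau_X(R))$ of norm at most $C$, so the length of its restriction to the interval between $s$ and $t$ is at most $C|t-s|$. Since the Riemannian distance is bounded above by the length of any piecewise smooth connecting curve, the desired bound follows.

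The arguments above are standard once the three tailored couplings are identified; the only subtlety worth flagging is in \ref{it:prel2}, where one must ensure that the bound $\|X(R,t)\|<C$ holds along the actual trajectories $\Psi^\tau_X(R)$ for $\tau$ between $s$ and $t$ (which is automatic from the hypothesis that $X$ is bounded on $\U\times[0,\tau)$), and that the Riemannian-length-versus-distance inequality on $SO(3)$ is applied to a curve that remains in $\U$. Since $\Psi_X$ takes values in $\U$ by definition, this causes no issue, and I do not anticipate any real obstacle in writing out the full argument.
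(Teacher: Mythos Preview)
Your proposal is correct and follows the standard coupling approach: diagonal push-forward for \ref{it:prel1}, product push-forward of an optimal plan for \ref{it:prel3}, and length-versus-distance for \ref{it:prel2}. The paper itself does not prove this lemma but defers to \cite[Lemma~2.3]{FePaPa2020}, where the argument is essentially the one you outline; so your write-up is in fact more self-contained than the paper's.
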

\begin{proof}
Proofs of these statements are presented for general Riemannian manifolds in \cite[Lemma 2.3]{FePaPa2020}. We refer the reader to this reference, also noting that in our context, probability densities in $\P(\U)$ necessarily have compact support.
\end{proof}

%%%

\paragraph{Assumptions on the interaction potential.} For future reference, we list here the assumptions we make on the interaction potential $K$. First, we assume that the interactions are intrinsic, that is, $K:SO(3)\times SO(3) \to \R$ depends only on the intrinsic distance $d$ on $SO(3)$. In Section \ref{sect:SO3}, we provide necessary materials on the Riemannian manifold structure of the rotation group. Since the distance function is not differentiable on the diagonal $\{(R,Q) \in SO(3)\times SO(3)\mid R = Q\}$, we take $K$ to depend on the squared distance function instead. Specifically, we make the following assumption on the interaction potential:

\begin{enumerate}[label=\textbf{(H)}]
\item \label{hyp:K} $K\: SO(3) \times SO(3)\to \R$ has the form
	\begin{equation}
	\label{eqn:K-gen}
		K(R,Q) = g(d(R,Q)^2), \qquad \mbox{for all } R,Q\in SO(3),
	\end{equation}
	where $g\: [0,\infty) \to \R$ is differentiable, with locally Lipschitz continuous derivative. 
\end{enumerate}

%The reason we take $g$ as a function of the squared distance is to avoid the singularity of the distance function on the diagonal $R=Q$. 
In the sequel we use the notation $K_Q(R)$ for $K(R,Q)$ and $d_Q(R)$ for $d(R,Q)$. The notation is particularly useful when we take the gradient of $K$ or $d$ with respect to one of the variables. For example the gradient with respect to $R$ of $K(R,Q)$ will show as $\nabla K_Q(R)$. 

With the notation and convention above, the intrinsic gradient of the distance function can be expressed as:
\begin{equation}
\label{eqn:gradd}
	\nabla d_Q(R) = -\frac{\log_R Q}{d(R,Q)}, \qquad \mbox{for $R\neq Q$},
\end{equation}
where $\log_R Q$ denotes the Riemannian logarithm map (i.e., the inverse of the Riemannian exponential map) on $SO(3)$ \cite{Petersen2006}. By chain rule, one can then compute from \eqref{eqn:K-gen}:
\begin{equation}
\label{eqn:gradK-gen}
	\nabla K_Q(R) = -2 g'(d(R,Q)^2) \log_R Q.
\end{equation}

Equations \eqref{eqn:gradd} and \eqref{eqn:gradK-gen} hold only for matrices $R$ and $Q$ that are within the injectivity radius of $SO(3)$ to each other (or equivalently, for matrices that are not in the cut locus of each other). For this reason, our analysis will be restricted to subsets of $SO(3)$ of diameter less than the injectivity radius (the injectivity radius of the rotation group is $\pi$ -- see Section \ref{sect:SO3} for more details).

%To ensure that these formulas hold, we shall therefore restrict in the following to an open subset $\U$ of $SO(3)$ which is geodesically convex; we remark that, in particular, this implies that $\U$ can be covered by a single chart.

The interpretation of \eqref{eqn:model} as an aggregation model can be inferred from \eqref{eqn:v-field} and \eqref{eqn:gradK-gen}. Specifically, a rotation matrix $R$ interacts with another rotation matrix $Q$ through a force of magnitude proportional to $|g'(d(R,Q)^2|d(R,Q)$, and either moves towards $Q$ (provided $g'(d(R,Q)^2) >0$) or moves away from $Q$  (provided $g'(d(R,Q)^2) < 0$). The velocity field at location $R$, as computed with  \eqref{eqn:v-field}, takes into account all such contributions via the convolution.

%%%%%%%%%%

\section{The rotation group as a Riemannian manifold}
\label{sect:SO3}

The rotation group $SO(3)$ consists of $3\times 3$ orthogonal matrices with determinant $1$, that is,
\[
SO(3) = \{ R \in \R^{3 \times 3}: R^T R = I \text{ and } \text{det }R = 1 \}.
\]
The tangent space to $SO(3)$ at a rotation $R \in SO(3)$ is given by
\[
T_R SO(3) = \{ R A: A \in \mathfrak{so}(3)\},
\]
where $\mathfrak{so}(3)$ is the Lie algebra of $SO(3)$ consisting of $3\times 3$ skew symmetric matrices. The Riemannian metric on the tangent space $T_R SO(3)$ is given by:
\begin{equation}
\label{eqn:metric}
g(R A_1,R A_2) = \frac{1}{2} \langle R A_1,R A_2\rangle_F =  \frac{1}{2} \langle A_1,A_2\rangle_F,
\end{equation}
for any $R A_1$, $R A_2 \in T_R SO(3)$, where $ \langle \cdot, \cdot \rangle_F$ denotes the Frobenius inner product. 
Consequently, in the norm induced by the metric, one has:
\begin{equation}
\label{eqn:length}
\| RA \|_{T_R SO(3)} = \frac{1}{\sqrt{2}} {\| RA \|}_F = \frac{1}{\sqrt{2}} {\| A \|}_F.
\end{equation}
Throughout the paper, for notational convenience, we will use the dot $\cdot$ to denote the inner product given by the Riemannian metric. Note that by \eqref{eqn:metric} it differs by a factor of $\frac{1}{2}$ from the Frobenius inner product $\langle \cdot, \cdot \rangle_F$. Also, we will use $|\cdot|$ for the norm of a tangent vector in the Riemannian metric; by \eqref{eqn:length} it differs by a factor of $\frac{1}{\sqrt{2}}$ from the Frobenius norm $\| \cdot\|_F$.

%\comment{RF: We should be careful with this coefficient of $\frac{1}{2}$. In the Riemannian metric of $SO(3)$ the length of a tangent vector is $\frac{1}{\sqrt{2}}$ of its Frobenius length. So we should distinguish between Riemannian inner product $\cdot$ and Frobenius inner product  $\langle \cdot,\cdot \rangle_F$.}

%%%

\paragraph{Angle-axis representation.}
Any rotation $R \in SO(3)$ can be identified via the exponential map with a pair $(\theta,\bfv) \in [0,\pi] \times S^2$, where $S^2$ denotes the unit sphere in $\mathbb{R}^3$. The pair $(\theta,\bfv)$ is referred to as the angle-axis representation of the rotation, where the unit vector $\bfv$ indicates the axis of rotation and $\theta$ represents the angle of rotation (by the right-hand rule) about the axis. The representation of $R$ in terms of $(\theta,\bfv)$ is given by Rodrigues's formula. To list it, we need the following common notation:
\begin{equation}
\label{eqn:vmat}
\widehat{\bfv} = 
\begin{bmatrix}
    0 & -v_3 & v_2 \\
    v_3 & 0 & -v_1 \\
    -v_2 & v_1 & 0
\end{bmatrix},
\end{equation}
for $\widehat{\bfv} \in \mathfrak{so}(3)$ corresponding to $\bfv = (v_1,v_2,v_3) \in \mathbb{R}^3$. Then, the angle-axis representation of a rotation $R$ is:
\begin{equation}
\label{eqn:tvtoR}
R = \exp(\theta \widehat \bfv ) = I + \sin \theta \, \widehat \bfv + (1-\cos \theta) \widehat \bfv^2,
\end{equation}
with $\widehat \bfv$ given by \eqref{eqn:vmat}. The inverse of the representation \eqref{eqn:tvtoR}, $\theta \widehat \bfv = \log R$, is given explicitly by:
\begin{equation}
\label{eqn:Rtotv}
\theta = \operatorname{acos}\left(\frac{\operatorname{tr}R-1}{2} \right), \qquad \widehat \bfv = \frac{1}{2 \sin \theta} (R - R^T).
\end{equation}
Here, $\exp$ and $\log$ represent the matrix exponential and logarithm, respectively.

%%%

\paragraph{Geodesic distance, exponential and logarithm maps.} Below, we list some standard facts on geodesics and the exponential map on the rotation group. Given two rotation matrices $R$, $Q \in SO(3)$, the shortest path between $R$ and $Q$ is along the geodesic curve $\calR :[0,1] \to SO(3)$ given by
\begin{equation}
\label{eqn:Rgeod}
\calR(t) = R \exp(t \log(R^T Q)).
\end{equation}
Note that $\calR'(t) = \calR(t) \log(R^T Q) \in T_{\calR(t)} SO(3)$. 

From \eqref{eqn:Rgeod}, one can easily see that the Riemannian distance on $SO(3)$ between $R$ and $Q$ is
\begin{equation}
\label{eqn:Rdist}
d(R,Q) = \dRQ,
\end{equation}
where $\dRQ \widehat{\bfv}_{_{RQ}}= \log(R^T Q)$. Throughout the paper we will frequently use the notation $\dRQ$ to denote the distance on $SO(3)$ between $R$ and $Q$. By \eqref{eqn:Rtotv} we also have:
\begin{equation}
\label{eqn:geod-dist}
d(R,Q)  =  \operatorname{acos}\left(\frac{\operatorname{tr}(R^T Q) -1}{2} \right),
\end{equation}
and in particular, $d(I,R) = \operatorname{acos}\left(\frac{\operatorname{tr}R -1}{2} \right)$. 

Using \eqref{eqn:Rgeod} one can also find explicitly the exponential map at $R$:
\begin{equation*}
%\label{eqn:exp-map}
\exp_{R}: T_{R} SO(3) \to SO(3), \qquad \exp_{R}(R A) = R \exp (A),
\end{equation*}
and its inverse:
\begin{equation}
\label{eqn:log-map}
\log_{R}: SO(3) \to T_{R} SO(3), \qquad \log_{R}(Q) = R \log(R^T Q).
\end{equation}
The formulas \eqref{eqn:tvtoR} and \eqref{eqn:Rtotv} can be expressed using the exponential map at the identity $I$. Indeed,
\[
R = \exp_I (\theta \widehat \bfv ), \qquad \theta \widehat \bfv = \log_I R.
\]

Note that the considerations above lead to:
\[
\theta = d(I,R) = {\| \theta \widehat \bfv \|}_{T_I SO(3)} = \frac{1}{\sqrt{2}} {\| \theta \widehat \bfv \|}_F,
\]
where for the last equality we used \eqref{eqn:length}. On the other hand, by \eqref{eqn:vmat} and the fact that $\bfv$ is a unit vector in $\mathbb{R}^3$ we have
\[
{\|\widehat \bfv \|}_F^2 = 2 |\bfv |^2 =2,
\]
making the equation above consistent. This justifies the coefficient $\frac{1}{2}$ in the Riemannian metric \eqref{eqn:metric}.

%%%

\paragraph{Injectivity and convexity radius.}The injectivity radius of the rotation group is $\pi$. To have a well-defined gradient of the distance function, we only consider in this paper subsets of $SO(3)$ where no two points are in the cut locus of each other. Examples of such sets are geodesic disks of radius $r<\pi/2$. For $r\in (0,\pi/2)$ we denote by 
\begin{equation}
\label{eqn:setS}
	D_r = \left\{ Q \in SO(3) \mid d(I,Q) < r \right\},
\end{equation}
the geodesic disk centred at the identity matrix of radius $r$. In general, $D_r(R)$ denotes the geodesic disk centred at $R$ of radius $r$. Note that the convexity radius of the rotation group is $\frac{\pi}{2}$, and hence any disk in $SO(3)$ of radius less than $\frac{\pi}{2}$ is geodesically convex. In particular, the maximum distance between any two points in a disk of radius less than $\frac{\pi}{2}$ is bounded by $\pi$, the injectivity radius.

To illustrate directly the singularity at injectivity radius of the exponential/logarithm map on $SO(3)$, we introduce the following notation:
\begin{equation}
\label{eqn:f}
f(\theta) = \frac{\theta}{\sin \theta}.
\end{equation}
By \eqref{eqn:log-map} and the angle-axis representation of $\log(R^TQ)$ (see \eqref{eqn:Rtotv}) one can then write:
\begin{equation} \label{eqn:gradd2}
\log_R Q =\frac{\dRQ}{2\sin\dRQ}R(R^TQ-Q^TR) =\frac{1}{2} f(\dRQ)(Q-RQ^TR).
\end{equation}
Note that $f(\dRQ) \to \infty$ as $\dRQ \to \pi$, the injectivity radius. In the sequel we fix $\e>0$ arbitrarily small and use the notation $\S$ for the disk $D_{\frac{\pi}{2}-\e}$, i.e.,
\[
\S =  \left\{ R \in SO(3) \mid d(I,R) < \frac{\pi}{2}-\e \right\}.
\]
This is the set on which we study and establish well-posedness of model \eqref{eqn:model}.  We chose $I$ as the centre of the disk with no loss of generality; the considerations in this paper would hold for a disk of radius $\frac{\pi}{2}-\e$ centred at a generic matrix $R$. 

Since $f$ and $f'$ are bounded on $[0,\pi-2 \e]$, we set
\begin{equation*}
	C_f := \sup_{\theta \in [0,\pi-2\e]} f(\theta), \qquad L_f := \sup_{\theta \in [0,\pi - 2\e]} f'(\theta).
\end{equation*}
Note that both $C_f$ and $L_f$ blow up as $\e \to 0$. Similarly, since the function $g'$ is assumed to be locally Lipschitz continuous, denote by
 $C_{g'}$ and $L_{g'}$ the $L^\infty$ norm and the Lipschitz constants of $g'$ on $[0,(\pi -2\e)^2]$, respectively. 
 
Note that for convenience of notations we chose not to indicate explicitly the dependence on $\e$ of these constants, a more pedantic notation would have been $C_f(\e),L_f(\e),C_{g'}(\e)$, and $L_{g'}(\e)$. We point out however that the dependence on $\e$ is essential and the results below do not hold in the limit $\e \to 0$.
 
%%%

\paragraph{Geodesic versus Frobenius distances.} All rotation matrices have constant Frobenius norm equal to $\sqrt{3}$. Hence, $SO(3)$ can be embedded as a subset of a sphere in the space $(\R^{3 \times 3},\langle \cdot, \cdot \rangle_F)$. For $R,Q \in SO(3)$, the distance $\|R-Q\|_F$ in the Frobenius norm relates to the geodesic distance $d(R,Q)$ as follows:
\begin{align}
{\|R-Q\|}_F^2&=\mathrm{tr}[(R-Q)^T(R-Q)]=\mathrm{tr}(2I-R^TQ-Q^TR)    \\
& =2\mathrm{tr}(I-R^TQ) =6-2\mathrm{tr}(R^TQ). \label{eqn:Fdist-ip}
\end{align}
From \eqref{eqn:geod-dist} one can then obtain:
\begin{equation}
\label{eqn:2dist}
d(R,Q)=\operatorname{acos} \left(1-\frac{1}{4}{\|R-Q\|}^2_F\right).
\end{equation}
Note that 
\begin{equation}
\label{eqn:ineq-dist}
{\| R - Q \|}_F \leq \sqrt{2} \, d(R,Q), \qquad \text{for all } R,Q \in SO(3).
\end{equation}
Indeed, one can use $\theta =  d(R,Q)$ and an elementary inequality $\sin \bigl( \frac{\theta}{2}\bigr) \leq  \frac{\theta}{2}$ for $\theta \in [0,\pi]$, to get 
\begin{equation}
\label{eqn:ineq-dist-expl}
{\| R - Q \|}_F^2 = 4 (1 - \cos \theta) = 8 \sin^2 \Bigl( \frac{\theta}{2}\Bigr) \leq 2 \, \theta^2.
\end{equation}

%%%%%%%%%%

\section{Well-posedness of the intrinsic model on $SO(3)$}
\label{sect:well-posedness}
In this section, we establish the well-posedness of model \eqref{eqn:model} on $\S$, and also investigate the particle solutions and demonstrate the mean-field approximation.

%%%%%

\subsection{Vector fields on SO(3)}
\label{subsect:v-fields}
We first investigate some properties of flows on $SO(3)$ corresponding to a given vector field. We will make use of the fact that $SO(3)$ is embedded in $\R^{3 \times 3}$, which allows us to view tangent vectors to $SO(3)$ as vectors in $\R^{3 \times 3}$. In particular, one can then take the difference of tangent vectors at different points of $SO(3)$. In the following two lemmas below we will require that the vector fields satisfy a Lipschitz condition (see \eqref{eq:Lipschitz-X}) with respect to the Frobenius norm of the ambient space $\R^{3 \times 3}$. Subsequently in the paper (Lemma \ref{lem:dist-flow-maps-K}), we will show that the vector field associated to the interaction equation satisfies indeed this Lipschitz property.

\begin{lem}\label{lem:dist-flow-maps}
Let $X$ and $Y$ be two time-dependent vector fields on $\S$. Let $\Sigma\subset \S$ and suppose that $\Psi_X$ and $\Psi_Y$ are flow maps defined on $\Sigma\times [0,\tau)$, for some $\tau >0$, generated by $(X,\Sigma)$ and $(Y,\Sigma)$, respectively. Assume that $X$ is bounded on $\S\times[0,\tau)$ and Lipschitz continuous with respect to its first variable (uniformly with respect to $t \in [0,\tau)$) on $\S\times [0,\tau)$, i.e., there exists $L_X>0$ such that
\begin{equation}
\label{eq:Lipschitz-X}
{\| X(R, t)-X(Q, t) \|}_F \leq L_X \, d(R, Q),\quad \mbox{for all } (t, R, Q)\in[0, T]\times \S \times \S,
\end{equation}
where the difference $X(R,\cdot)-X(Q, \cdot)$ is considered in the ambient space $(\R^{3 \times 3},{\|\cdot\|}_F)$. Let $\PsiX$ and $\PsiY$ be the flow maps corresponding to $X$ and $Y$, respectively. Then, for all $R_0 \in \S$,
\[
d(\Psi_X^t(R_0), \Psi_Y^t(R_0))  \leq \frac{e^{C_\e t} -1}{C_\e} \|X-Y\|_{L^\infty(\S\times [0,\tau))},\quad \quad \mbox{for all $t\in[0,\tau)$},
\]
where 
\begin{equation}
\label{eqn:Ce}
C_\e =  \sqrt{6}\, \frac{\tan(\pi/2-\e)}{\pi/2-\e}  {\|X\|}_{L^\infty(\S \times [0,\tau))} + \frac{L_X}{\sqrt{2}}.
\end{equation}
\end{lem}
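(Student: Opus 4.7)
The plan is to establish the stated estimate as the integrated form of a Gronwall-type differential inequality for $\theta(t) := d(\Psi_X^t(R_0), \Psi_Y^t(R_0))$. Setting $R(t) := \Psi_X^t(R_0)$ and $Q(t) := \Psi_Y^t(R_0)$, one has $\theta(0) = 0$ (since both flows start at $R_0$), and the ambient-space derivatives $R'(t) = X(R(t),t)$, $Q'(t) = Y(Q(t),t)$. The goal is to show
\bes
    \theta'(t) \leq C_\e\, \theta(t) + \|X-Y\|_{L^\infty(\S \times [0,\tau))}, \qquad t\in[0,\tau),
\ees
from which the claim follows by Gronwall with initial value $\theta(0)=0$.

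To produce this inequality, I would differentiate the identity $\cos\theta = (\operatorname{tr}(R^T Q)-1)/2$ from \eqref{eqn:geod-dist} to get
\bes
    -\sin\theta \cdot \theta'(t) = \tfrac{1}{2}\bigl(\operatorname{tr}(X(R)^T Q) + \operatorname{tr}(R^T Y(Q))\bigr).
\ees
Since every $V \in T_R SO(3)$ satisfies $\langle V, R\rangle_F = 0$ (as $V = RA$ with $A$ skew), I can add the vanishing terms $-\operatorname{tr}(X(R)^T R)$ and $-\operatorname{tr}(Q^T Y(Q))$ to rewrite the right-hand side as $-\tfrac{1}{2}\langle X(R) - Y(Q),\, R - Q\rangle_F$. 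Splitting $X(R) - Y(Q) = (X(R) - X(Q)) + (X(Q) - Y(Q))$, I would then bound the two pieces separately: the first via Cauchy--Schwarz combined with the Lipschitz hypothesis \eqref{eq:Lipschitz-X} and the identity $\|R-Q\|_F = 2\sqrt{2}\sin(\theta/2)$ from \eqref{eqn:ineq-dist-expl}; the second by decomposing $Q^T(R - Q) = -\sin\theta\,\widehat{\bfv} + (1-\cos\theta)\widehat{\bfv}^2$ (with $\widehat{\bfv}$ the axis of $R^T Q$) and observing that $X(Q) - Y(Q)\in T_Q SO(3)$ is Frobenius-orthogonal to the symmetric $\widehat{\bfv}^2$ part, so only the skew $-\sin\theta\,\widehat{\bfv}$ component contributes. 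This last cancellation produces an extra $\sin\theta$ that cancels the $1/\sin\theta$ from the division, leaving $\|X - Y\|_{L^\infty}$ with the correct coefficient $1$.

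The main obstacle is the bookkeeping of constants---tracking the $\sqrt{2}$ Frobenius-to-Riemannian conversion from \eqref{eqn:length} and the $\sqrt{3}$ from $\|R\|_F = \sqrt{3}$ for $R\in SO(3)$ (together producing the $\sqrt{6}$ in $C_\e$)---and uniformly controlling residual factors of $\sin(\theta/2)$ or $1/\cos(\theta/2)$ on the range $\theta \in [0,\pi-2\e]$. For the latter, the natural tool is the monotonicity of $\theta \mapsto \tan(\theta/2)/(\theta/2)$, which is increasing and therefore bounded above on this interval by $\tan(\pi/2-\e)/(\pi/2-\e)$, giving exactly the factor appearing in \eqref{eqn:Ce}. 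A minor subtlety is the apparent $0/0$ indeterminacy of the rewritten identity at $\theta = 0$, which I would bypass either by working with the smooth ambient quantity $u(t)^2 := \|R(t) - Q(t)\|_F^2$ and converting via $u = 2\sqrt{2}\sin(\theta/2)$ only at the end, or by computing the one-sided limit $\theta'(0^+) = \|X(R_0,0) - Y(R_0,0)\|_{R_0} \leq \|X-Y\|_{L^\infty}$ directly, which is consistent with the inequality at $\theta = 0$.
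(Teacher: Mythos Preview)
Your approach is correct and in fact more economical than the paper's, but it does not reproduce the specific constant $C_\e$ stated in \eqref{eqn:Ce}, and your remark about how the $\sqrt{6}$ arises is where the proposal goes astray. The paper differentiates $\theta=d(R,Q)$ directly via the intrinsic gradient $\nabla d$ and splits into three pieces $\I_1,\I_2,\I_3$: the term $\I_1$, which pairs $\nabla d_Q(R)+\nabla d_R(Q)$ with $X(R)$, is a purely geometric correction (it would vanish in flat space) and is the source of the $\sqrt{6}\,\tfrac{\tan(\pi/2-\e)}{\pi/2-\e}\|X\|_{L^\infty}$ summand; the $\sqrt{3}$ enters through an estimate $\|R+Q\|_F\le 2\sqrt{3}$ in bounding $\|Q-RQ^TR+R-QR^TQ\|_F$. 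Your decomposition, by contrast, adds the vanishing terms $\langle X(R),R\rangle_F$ and $\langle Y(Q),Q\rangle_F$ and then splits $X(R)-Y(Q)$ into a Lipschitz part and an $X-Y$ part; this absorbs the curvature contribution into the Lipschitz piece. Carrying your two terms through exactly as you describe (and your skew/symmetric cancellation for the second term is correct), one obtains
\[
\theta'(t)\;\le\;\frac{L_X}{\sqrt{2}\cos(\theta/2)}\,\theta \;+\;\|X-Y\|_{L^\infty},
\]
so the Gronwall constant is $\tilde C_\e=\dfrac{L_X}{\sqrt{2}\sin\e}$ (from $\theta/2\le\pi/2-\e$). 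No $\|X\|_{L^\infty}$ appears and no $\sqrt{3}$ enters; your comment about tracking $\sqrt{3}$ from $\|R\|_F$ simply does not apply to your own argument.

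For the downstream applications in the paper (the contraction in Theorem~\ref{thm:well-posedness}, the stability estimate) any finite $\e$-dependent constant suffices, so your argument establishes what is actually needed. But as written the lemma fixes the particular $C_\e$ of \eqref{eqn:Ce}, and your route yields a different one. If you want to match the statement verbatim you would have to mimic the paper's three-term split; otherwise, note explicitly that your method produces an alternative constant $\tilde C_\e$ depending only on $L_X$ and $\e$.
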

\begin{proof}
We fix $R_0 \in \S$ and estimate the distance $d(\Psi_X^t(R_0), \Psi_Y^t(R_0))$ as follows.
\begin{align*}
& \frac{d}{dt}d(\PsiX(R_0), \PsiY(R_0)) \\
&\qquad = \nabla d_{\PsiY(R_0)} (\PsiX(R_0)) \cdot X_t(\PsiX(R_0))+\nabla d_{\PsiX(R_0)} (\PsiY(R_0)) \cdot Y_t(\PsiY(R_0))\\
& \qquad = \frac{1}{2} \langle \nabla d_{\PsiY(R_0)}  (\PsiX(R_0)), X_t(\PsiX(R_0)) \rangle_F + \frac{1}{2} \langle \nabla d_{\PsiX(R_0)} (\PsiY(R_0)), Y_t(\PsiY(R_0)) \rangle_F.
\end{align*}
In what follows, we will add and subtract 
\[
%A = \nabla d_{\PsiX(R_0)}(\PsiY(R_0)) \cdot X_t(\PsiX(R_0))
\frac{1}{2} \langle \nabla d_{\PsiX(R_0)}(\PsiY(R_0)),X_t(\PsiX(R_0)) \rangle_F \quad \text{ and } \quad
%B = \nabla d_{\PsiX(R_0)}(\PsiY(R_0)) \cdot X_t(\PsiY(R_0))
\frac{1}{2} \langle \nabla d_{\PsiX(R_0)}(\PsiY(R_0)), X_t(\PsiY(R_0)) \rangle_F
\] 
to the right-hand side above. The reason for this is as follows. Note that the two vectors in the first inner product that we add and subtract, are tangent vectors to $SO(3)$ at {\em different} points. Hence, we used in this calculation the Frobenius inner product in the ambient space $(\R^{3 \times 3},{\|\cdot\|}_F)$. Therefore, one has 
\begin{align*}
\begin{aligned}
& \frac{d}{dt}d(\PsiX(R_0), \PsiY(R_0)) \\
&\quad = \underbrace{\frac{1}{2} \langle \nabla d_{\PsiY(R_0)}  (\PsiX(R_0)), X_t(\PsiX(R_0)) \rangle_F  + \frac{1}{2} \langle \nabla d_{\PsiX(R_0)}(\PsiY(R_0)),X_t(\PsiX(R_0)) \rangle_F}_{=:\I_1} \\
&\quad  \underbrace{-\frac{1}{2} \langle \nabla d_{\PsiX(R_0)}(\PsiY(R_0)),X_t(\PsiX(R_0)) \rangle_F + \frac{1}{2} \langle \nabla d_{\PsiX(R_0)}(\PsiY(R_0)), X_t(\PsiY(R_0)) \rangle_F}_{=:\I_2} \\
&\quad \underbrace{-\frac{1}{2} \langle \nabla d_{\PsiX(R_0)}(\PsiY(R_0)),X_t(\PsiY(R_0)) \rangle_F + \frac{1}{2} \langle \nabla d_{\PsiX(R_0)}(\PsiY(R_0)), Y_t(\PsiY(R_0)) \rangle_F}_{=:\I_3}.
\end{aligned}
\end{align*}
In the sequel, we estimate the terms ${\mathcal I}_i$ one by one. \newline

\noindent $\bullet$~(Estimate of ${\mathcal I}_3$): By direct calculation, one has 
\begin{equation} \label{eqn:I3}
\I_3  =   
%\nabla d_{\PsiX(R_0)}(\PsiY(R_0)) \cdot (Y_t(\PsiY(R_0)) - X_t(\PsiY(R_0)))   \\[5pt]
\frac{1}{2} \langle \nabla d_{\PsiX(R_0)}(\PsiY(R_0)), Y_t(\PsiY(R_0)) - X_t(\PsiY(R_0)) \rangle_F   \\[2pt]
%& =  \nabla d_{\PsiX(R_0)} (\PsiY(R_0)) \cdot (Y_t(\PsiY(R_0)) - X_t(\PsiY(R_0)))   \\
\leq \| X-Y\|_{L^\infty(\S \times [0,\tau))},
\end{equation}
where we used the Cauchy--Schwarz inequality and the fact that the gradient of the distance has norm equal to $1$; we also used \eqref{eqn:length} to relate the metric norm of $SO(3)$ with the Frobenius norm. \newline

\noindent $\bullet$~(Estimate of ${\mathcal I}_2$): Similarly, one has 
\begin{align}
\I_2 &=  \frac{1}{2} \langle \nabla d_{\PsiX(R_0)}(\PsiY(R_0)) , X_t(\PsiY(R_0)) - X_t(\PsiX(R_0)) \rangle_F   \\[5pt]
& \leq \frac{1}{\sqrt{2}} \| X_t(\PsiY(R_0)) - X_t(\PsiX(R_0)) \|_F   \\[5pt]
& \leq \frac{L_X}{\sqrt{2}} \, d(\PsiX(R_0),\PsiY(R_0)),
\label{eqn:I2}
\end{align}
where for the last inequality we used  the Lipschitz condition \eqref{eq:Lipschitz-X}. \newline

\noindent $\bullet$~(Estimate of ${\mathcal I}_1$):~Recall that 
\[
\I_1 =  \frac{1}{2} \langle \nabla d_{\PsiY(R_0)}  (\PsiX(R_0)) + \nabla d_{\PsiX(R_0)}(\PsiY(R_0)), X_t(\PsiX(R_0)) \rangle_F.
\]
We denote by $\theta=d(\Psi_X^t(R_0), \Psi_Y^t(R_0))$ the geodesic distance on $SO(3)$ between $\PsiX (R_0)$ and $\PsiY(R_0)$, and we set 
\[ R=\PsiX (R_0) \quad \mbox{and} \quad  Q=\PsiY (R_0). \]
Then we use \eqref{eqn:gradd2} to write:
\begin{equation} \label{eqn:I+A}
\I_1 =  - \frac{1}{2\theta}  \langle  \log_{R} Q+ \log_{Q} R, X_t(R) \rangle_F = - \frac{1}{4\sin\theta} \langle Q-R Q^TR+R-QR^TQ , X_t(R) \rangle_F.
\end{equation}
We use the commutativity of trace to find
\begin{align}
%\begin{aligned}
 \label{New-1} 
&{ \|  Q-R Q^TR+R-QR^TQ  \|}_F^2  \nonumber \\
& \hspace{0.5cm} = {\|R(R^TQ-Q^TR)+Q(Q^TR-R^TQ)\|}_F^2 ={\|(R-Q)(R^TQ-Q^TR)\|}_F^2 \nonumber \\
& \hspace{0.5cm}=\mathrm{tr}[(R-Q)(R^TQ-Q^TR)(R^TQ-Q^TR)^T(R-Q)^T] \nonumber \\
& \hspace{0.5cm} =\mathrm{tr}[(R^TQ-Q^TR)(R^TQ-Q^TR)^T(R-Q)^T(R-Q)] \nonumber \\
& \hspace{0.5cm}=\mathrm{tr}[(2I-R^TQR^TQ-Q^TRQ^TR)(R-Q)^T(R-Q)].
%\end{aligned}
\end{align}
The terms inside the trace can be factored as follows.
\begin{align}
%\begin{aligned} 
\label{New-2}
&2I-R^TQR^TQ-Q^TRQ^TR \nonumber \\
& \hspace{0.5cm} =4I-(I+R^TQR^TQ)-(I+Q^TRQ^TR) \nonumber \\
&  \hspace{0.5cm}= 4I-(I-R^TQ)(I-R^TQ)-(I-Q^TR)(I-Q^TR)-2R^TQ-2Q^TR \nonumber \\
&  \hspace{0.5cm} = 2(R-Q)^T(R-Q)+(R-Q)^TQR^T(R-Q)+(R-Q)^TRQ^T(R-Q) \nonumber \\
&  \hspace{0.5cm} = (R-Q)^T(2I+RQ^T+QR^T)(R-Q) \nonumber \\
&  \hspace{0.5cm} = (R-Q)^T(R+Q)(R+Q)^T(R-Q).
%\end{aligned}
\end{align}
Now, we combine \eqref{New-1} and \eqref{New-2} to obtain
\begin{align*}
{\|Q-R Q^TR+R-QR^TQ\|}_F^2&=\mathrm{tr}[(R-Q)^T(R+Q)(R+Q)^T(R-Q)(R-Q)^T(R-Q)]\\
&\leq {\| R-Q \|}_F^4\cdot {\|R+Q \|}_F^2,
\end{align*}
where for the inequality we used Lemma \ref{L1}. 

On the other hand, we take a square root and use the fact that $R,Q$ have Frobenius norm $\sqrt{3}$ to get 
\[
{\|Q-R Q^TR+R-QR^TQ\|}_F \leq2\sqrt{3}{\|R-Q\|}_F^2 =16\sqrt{3} \sin^2 \frac{\theta}{2},
\]
where for the second line we used \eqref{eqn:ineq-dist-expl}. Then, we use the inequality above in \eqref{eqn:I+A} together with the Cauchy-Schwartz to get:
\begin{align}
%\begin{aligned} 
\I_1  &\leq \frac{1}{4\sin\theta} {\|Q-R Q^TR+R-QR^TQ\|}_F \sup_{R \in \S} {\| X_t(R) \|}_F  \nonumber \\
 &\leq 2\sqrt{6}\tan\frac{\theta}{2} \| X \|_{L^\infty(\S \times [0,\tau))}. \label{eqn:I1}
%\end{aligned}
\end{align}
Finally, we use 
\[ \tan\frac{\theta}{2} \leq \frac{\tan (\pi/2-\e)}{\pi/2-\e} \cdot \frac{\theta}{2}, \quad \mbox{for all $\theta \in [0,\pi-2 \e)$}, \]
and  combine all the estimates \eqref{eqn:I3}, \eqref{eqn:I2} and \eqref{eqn:I1} to get
\begin{align} 
%\begin{aligned} 
&\frac{d}{dt}d(\Psi_X^t(R_0), \Psi_Y^t(R_0))  \leq \left( \sqrt{6}  \frac{\tan (\pi/2-\e)}{\pi/2-\e} \|X\|_{L^\infty(\S \times [0,\tau))} + \frac{L_X}{\sqrt{2}} \right) d(\Psi_X^t(R_0), \Psi_Y^t(R_0)) \nonumber \\
& \hspace{1cm} +||X-Y||_{L^\infty(\S \times [0,\tau))}. \label{eqn:est-total}
%\end{aligned}
\end{align}
Then, Gronwall's lemma yields the desired estimate.
\end{proof}
In the next lemma, we establish a Lipschitz property for flows of vector fields satisfying \eqref{eq:Lipschitz-X}.
\begin{lem}\label{lem:Lipschitz-initial}
Let $X$ be a time-dependent vector field on $\S$. Let $\Sigma\subset \S$ and $\Psi_X$ be the flow map generated by $(X,\Sigma)$ on $[0,\tau)$, for some $\tau >0$. Suppose $X$ is bounded and satisfies \eqref{eq:Lipschitz-X}) on $\S\times [0,\tau)$. Then, 
\bes
	d(\PsiX(R),\PsiX(Q)) \leq e^{C_\e t} d(R,Q), \qquad \mbox{for all $R,Q \in \Sigma$ and $t\in [0,\tau)$},
\ees
with $C_\e$ given by \eqref{eqn:Ce}.
\end{lem}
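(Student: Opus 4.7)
The plan is to mimic the proof of Lemma \ref{lem:dist-flow-maps}, specializing to the case $Y=X$ but with two different initial conditions $R$ and $Q$. Concretely, set $\bar R_t = \Psi_X^t(R)$ and $\bar Q_t = \Psi_X^t(Q)$, and differentiate the distance $d(\bar R_t,\bar Q_t)$ in $t$. By the chain rule and \eqref{eqn:metric},
\begin{equation*}
\frac{d}{dt} d(\bar R_t,\bar Q_t) = \tfrac12 \langle \nabla d_{\bar Q_t}(\bar R_t), X_t(\bar R_t)\rangle_F + \tfrac12 \langle \nabla d_{\bar R_t}(\bar Q_t), X_t(\bar Q_t)\rangle_F.
\end{equation*}
Here the two gradients live in tangent spaces at different points, so they are compared in the ambient space $(\R^{3\times 3},\langle\cdot,\cdot\rangle_F)$ just as in the earlier lemma.

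Next, I would add and subtract $\tfrac12\langle\nabla d_{\bar R_t}(\bar Q_t),X_t(\bar R_t)\rangle_F$, splitting the right-hand side into exactly the two terms $\I_1$ and $\I_2$ that appeared in Lemma \ref{lem:dist-flow-maps}. The third term $\I_3$ (which measured the $L^\infty$ discrepancy between two distinct vector fields) drops out entirely because we now use $X$ at both endpoints. Consequently the estimates \eqref{eqn:I2} and \eqref{eqn:I1} apply verbatim:
\begin{equation*}
\I_1 \le 2\sqrt6 \,\tan\!\tfrac{\theta}{2}\, \|X\|_{L^\infty(\S\times[0,\tau))}, \qquad \I_2 \le \tfrac{L_X}{\sqrt2} d(\bar R_t,\bar Q_t),
\end{equation*}
where $\theta = d(\bar R_t,\bar Q_t) \in [0,\pi-2\e)$ by the diameter bound of $\S$.

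Combining these with the elementary inequality $\tan(\theta/2) \le \frac{\tan(\pi/2-\e)}{\pi/2-\e}\cdot \frac{\theta}{2}$ valid on $[0,\pi-2\e)$ yields the linear differential inequality
\begin{equation*}
\frac{d}{dt} d(\Psi_X^t(R),\Psi_X^t(Q)) \le C_\e \, d(\Psi_X^t(R),\Psi_X^t(Q)),
\end{equation*}
with $C_\e$ precisely as in \eqref{eqn:Ce}. A single application of Gr\"onwall's lemma, using the initial condition $\Psi_X^0 = \mathrm{id}$, then gives the stated bound. I do not anticipate any real obstacles here: the potentially delicate geometric algebra (handling the sum $\nabla d_{\bar Q_t}(\bar R_t)+\nabla d_{\bar R_t}(\bar Q_t)$ in the ambient Frobenius structure, and controlling the factor $1/\sin\theta$ near the boundary of $\S$) has already been done in Lemma \ref{lem:dist-flow-maps}, and is reused here essentially unchanged; the only new observation is that the absence of an $X{-}Y$ term makes the inequality homogeneous, which is exactly what drives the pure exponential (rather than affine) bound.
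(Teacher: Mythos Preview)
Your proposal is correct and follows essentially the same approach as the paper: differentiate $d(\Psi_X^t(R),\Psi_X^t(Q))$, add and subtract $\nabla d_{\Psi_X^t(R)}(\Psi_X^t(Q))\cdot X_t(\Psi_X^t(R))$, reuse the $\I_1$ and $\I_2$ estimates from Lemma~\ref{lem:dist-flow-maps} (the paper even says explicitly ``similar to $\I_1$''), and apply Gronwall. Your observation that $\I_3$ vanishes because both endpoints are driven by the same field $X$ is exactly the point.
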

\begin{proof}
Let $R,Q \in \Sigma$ be fixed. Now, we estimate the distance $d(\PsiX(R),\PsiX(Q))$ as follows.
\begin{align}
\begin{aligned} \label{eqn:estpq0}
& \frac{\der}{\der t} d(\PsiX(R),\PsiX(Q)) \\
& \hspace{1cm} = \nabla d_{\PsiX(Q)}(\PsiX(R)) \cdot X_t(\PsiX(R)) + \nabla d_{\PsiX(R)}(\PsiX(Q)) \cdot X_t(\PsiX(Q)). 
\end{aligned}
\end{align}
By adding and subtracting $\nabla d_{\PsiX(R)}(\PsiX(Q)) \cdot X_t(\PsiX(R))$ to the right-hand side of \eqref{eqn:estpq0}, we can proceed to estimate similar to $\I_1$ in Lemma \ref{lem:dist-flow-maps} (in particular, see \eqref{eqn:I1}), and obtain
\begin{align}
%\begin{aligned} 
	& \left( \nabla d_{\PsiX(Q)}(\PsiX(R)) + \nabla d_{\PsiX(R)}(\PsiX(Q)) \right) \cdot X_t(\PsiX(R)) \nonumber  \\
	& \hspace{2cm} \leq 2\sqrt{6} \| X \|_{L^\infty(\S \times [0,\tau))} \tan \left( \frac{d(\PsiX(R),\PsiX(Q))}{2} \right) \nonumber \\
	& \hspace{2cm} \leq \sqrt{6} \| X \|_{L^\infty(\S \times [0,\tau))}  \frac{\tan (\pi/2-\e)}{\pi/2-\e} \, d(\PsiX(R),\PsiX(Q)). \label{New-3}
%\end{aligned}
\end{align}
For the remaining two terms, we use the Cauchy--Schwarz inequality and the Lipschitz condition on $X$ to get
\begin{equation} \label{New-4}
\nabla d_{\PsiX(R)}(\PsiX(Q)) \cdot (X_t(\PsiX(Q)) - X_t(\PsiX(R))) \leq \frac{L_X}{\sqrt{2}} \, d(\PsiX(R),\PsiX(Q)).
\end{equation}
Finally, in  \eqref{eqn:estpq0}, we combine estimates \eqref{New-3} and \eqref{New-4} to find
\[
\frac{\der}{\der t} d(\PsiX(R),\PsiX(Q)) \leq \left(\sqrt{6} \| X \|_{L^\infty(\S \times [0,\tau))}  \frac{\tan (\pi/2-\e)}{\pi/2-\e} +  \frac{L_X}{\sqrt{2}} \right) d(\PsiX(R),\PsiX(Q)).
\]
This yields the desired estimate.
\end{proof}

%%%%%

\subsection{Well-posedness of solutions}
\label{subsect:well-posedness}

We first check that the vector field \eqref{eqn:v-field} associated to equation \eqref{eqn:model} is bounded and satisfies the Lipschitz condition \eqref{eq:Lipschitz-X}.

\begin{lem}\label{lem:dist-flow-maps-K}
 	Let $K$ satisfy \ref{hyp:K} and let $\rho \in \Cont([0,T);\P(\S))$. Then, the following assertions hold:  \\[2pt] 
\indent i) The vector field $\V[\rho]$ given by \eqref{eqn:v-field} is bounded on $\S\times [0,T)$:
\[
	\|\V[\rho]\|_{L^\infty(\S\times[0,T))} \leq 2 \pi C_{g'}.  \\[2pt]
\]
\indent ii) $\V[\rho]$ satisfies the Lipschitz condition \eqref{eq:Lipschitz-X}: there exists $L>0$ such that
\[
		{\| \V[\rho](R,t) - \V[\rho](Q,t) \|}_F \leq L \, d(R,Q),~~\mbox{for all $(R,Q,t) \in \S \times \S\times [0,T)$},
\]
where the Lipschitz constant $L$ depends only on $C_f,L_f,C_{g'}$ and $L_{g'}$.
\end{lem}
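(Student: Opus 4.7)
The plan is to exploit the explicit formula for $\nabla K_P(R)$ obtained by combining \eqref{eqn:gradK-gen} with \eqref{eqn:gradd2}. Writing $\theta_{RP} := d(R,P)$, one has
\begin{equation*}
\nabla K_P(R) = -2 g'(\theta_{RP}^2) \log_R P = -g'(\theta_{RP}^2)\, f(\theta_{RP})\, (P - R P^T R).
\end{equation*}
Since $\S$ is the geodesic disk of radius $\pi/2 - \e$ centred at $I$, the triangle inequality gives $\theta_{RP} \leq \pi - 2\e$ for all $R, P \in \S$, so the constants $C_f, L_f, C_{g'}, L_{g'}$ are in force throughout the argument.

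Part (i) is immediate from the identity $|\log_R P| = \theta_{RP}$ (the Riemannian norm of the Riemannian logarithm equals the geodesic distance): this gives $|\nabla K_P(R)| \leq 2 C_{g'}(\pi - 2\e) < 2\pi C_{g'}$, and integrating against the probability measure $\rho_t$ via the triangle inequality produces the stated bound on $\V[\rho]$.

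For part (ii), I would telescope
\begin{align*}
\nabla K_P(R) - \nabla K_P(Q) &= -\bigl[g'(\theta_{RP}^2) - g'(\theta_{QP}^2)\bigr]\, f(\theta_{RP})\, (P - R P^T R) \\
&\quad - g'(\theta_{QP}^2)\, \bigl[f(\theta_{RP}) - f(\theta_{QP})\bigr]\, (P - R P^T R) \\
&\quad + g'(\theta_{QP}^2)\, f(\theta_{QP})\, \bigl[R P^T R - Q P^T Q\bigr],
\end{align*}
and estimate the three pieces separately. The two scalar differences are handled by the reverse triangle inequality $|\theta_{RP} - \theta_{QP}| \leq d(R,Q)$ combined with the Lipschitz properties of $g'$ and $f$, yielding bounds $2 L_{g'}(\pi - 2\e)\, d(R,Q)$ and $L_f\, d(R,Q)$ respectively. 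The matrix factor $P - R P^T R$ is uniformly bounded in Frobenius norm by $2\sqrt{3}$, since both $P$ and $R P^T R$ are rotations, hence of Frobenius norm $\sqrt{3}$.

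The only mildly delicate ingredient is the estimate of $\|R P^T R - Q P^T Q\|_F$, for which I would use the algebraic identity
\begin{equation*}
R P^T R - Q P^T Q = R P^T (R - Q) + (R - Q) P^T Q,
\end{equation*}
combined with the submultiplicativity $\|AB\|_F \leq \|A\|_{\mathrm{op}} \|B\|_F$ and the fact that rotations have operator norm $1$, to conclude $\|R P^T R - Q P^T Q\|_F \leq 2 \|R - Q\|_F \leq 2\sqrt{2}\, d(R,Q)$ via \eqref{eqn:ineq-dist}. Summing the three pointwise estimates gives $\|\nabla K_P(R) - \nabla K_P(Q)\|_F \leq L\, d(R,Q)$ with $L$ a polynomial expression in $C_f, L_f, C_{g'}, L_{g'}$ (and $\e$), and a final integration against $\rho_t$ delivers the Lipschitz estimate on $\V[\rho]$. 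The main obstacle is really just the bookkeeping of the telescoping decomposition together with the above matrix identity; one must additionally be careful not to confuse the Frobenius and Riemannian norms, which differ by a factor of $\sqrt{2}$.
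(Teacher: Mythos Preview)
Your proof is correct and follows essentially the same strategy as the paper: both arguments telescope $\nabla K_P(R)-\nabla K_P(Q)$ and control the matrix piece via the identity $RP^TR-QP^TQ = RP^T(R-Q)+(R-Q)P^TQ$ together with \eqref{eqn:ineq-dist}. The paper organises the telescoping in two nested stages (first splitting off $g'$ from $\nabla d_P^2$, then $f$ from the matrix factor), which lets it bound the $g'$-difference term using $|\nabla d_P^2(R)|=2\theta_{RP}$ directly and thereby avoid a factor of $C_f$ in the $L_{g'}$-contribution to the final constant; your three-term telescoping is slightly more direct but yields a marginally larger constant, and the difference is purely cosmetic.
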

\begin{proof}
(i)~The boundedness of $\V[\rho]$ follows immediately from \eqref{eqn:v-field} and the assumption on $K$. Indeed, for all $(R,t) \in \S\times [0,T)$,
\begin{equation}
\label{eqn:X-bound}
	|\V[\rho](R,t)| \leq  \int_{\S} |2g'(d(R,Q)^2) \log_R Q| \d\rho_t(Q) \leq 2 \pi C_{g'}, 
\end{equation}
where we also used \eqref{eqn:gradK-gen}, the bound on $g'$ and that $|\log_R Q| = d(R,Q)<\pi$ for all $R,Q \in \S$. \newline

\noindent (ii)~To show the Lipschitz condition, let $R,Q \in \S$. By \eqref{eqn:v-field}, one has
\begin{equation}
\label{eqn:X-diff}
\V[\rho](R,t) - \V[\rho](Q,t) = \int_{\S} (\nabla \KU(R) - \nabla \KU (Q)) \d \rho_t(U),
\end{equation}
where the difference of the tangent vectors at different points $R$ and $Q$ is taken  in the embedding space $\R^{3 \times 3}$. For $R,Q,U \in \S$, we use notation \eqref{eqn:f} and expression \eqref{eqn:gradd2} (also recall notation \eqref{eqn:Rdist}) to find
\begin{align*}
\nabla d^2_U(R) - \nabla d^2_U(Q) &= -2 \log_R U + 2 \log_Q U \\
&= - f(\theta_{_{RU}}) (U - RU^T R) + f(\theta_{_{QU}})(U-QU^T Q).
\end{align*}
Again, we add and subtract $ f(\theta_{_{QU}}) (U - RU^T R)$ to the above relation and compute the resulting relation as 
\begin{align}
&{\| \nabla d^2_U(R) - \nabla d^2_U(Q) \|}_F \nonumber \\
& \hspace{1cm} \leq | f(\theta_{_{QU}}) -  f(\theta_{_{RU}}) | {\| U - RU^T R\|}_F + | f(\theta_{_{QU}})|  {\| RU^T R -  QU^T Q \|}_F   \nonumber \\
& \hspace{1cm} \leq  2 \sqrt{3} L_f | \theta_{_{QU}} -  \theta_{_{RU}}| + 2 \sqrt{3} C_f {\| R - Q \|}_F,
\label{eqn:est1}
\end{align}
where we used the Lipschitz property and bound of $f$ together with
\[
{\| U - RU^T R\|}_F \leq {\| U \|}_F + {\| RU^T R\|}_F = 2 \sqrt{3},
\]
and
\begin{align*}
{ \| RU^T R -  QU^T Q \|}_F &= {\| RU^T (R-Q) + (R-Q) U^T Q \|}_F \\
 & \leq  {\| RU^T \|}_F {\| R-Q \|}_F + {\| R-Q \|}_F {\| U^T Q \|}_F \\
 & = 2 \sqrt{3} {\| R-Q \|}_F.
\end{align*}
Then, from \eqref{eqn:est1}, using the triangle inequality $| \theta_{_{QU}} -  \theta_{_{RU}}| \leq \dRQ$, and $ \| R-Q \|_F \leq \sqrt{2} \, \dRQ$ (see \eqref{eqn:ineq-dist}) one gets:
\begin{equation}
\label{eqn:ineq-d}
{\| \nabla d^2_U(R) - \nabla d^2_U(Q) \|}_F \leq 2 \sqrt{3} (L_f + C_f \sqrt{2}) \, d(R,Q).
\end{equation}

Consider now an interaction potential in the form \eqref{eqn:K-gen}. For any $U\in\S$, we get:
\begin{align*}
	&{ \|\nabla K_U(R) - \nabla K_U(Q) \|}_F = {\|g'(d(R,U)^2) \nabla d_U^2(R) - g'(d(Q,U)^2) \nabla d_U^2(Q) \|}_F \\
	&\qquad \qquad \leq |g'(\theta_{_{RU}}^2) - g'(\theta_{_{QU}}^2) | {\|\nabla d_U^2(R)\|}_F + |g'(\theta_{_{QU}}^2)| {\|\nabla d_U^2(R) - \nabla d_U^2(Q)\|}_F,
\end{align*}
by adding and subtracting $g'(\theta_{_{QU}}^2) \nabla d_U^2(R)$ on the first line and then using triangle inequality. Further, by using the bounds and Lipschitz constants of $g'$, the fact that $|\nabla d_U^2(R)| = 2 \theta_{_{RU}}$, and \eqref{eqn:ineq-d}, we obtain:
\begin{align}
	 \|\nabla K_U(R) - \nabla K_U(Q)\|_F & \leq 2 L_{g'} |\theta_{_{RU}}+\theta_{_{QU}}||\theta_{_{RU}}-\theta_{_{QU}}| \theta_{_{RU}} +  2 \sqrt{3} C_{g'}(L_f + C_f \sqrt{2}) d(R,Q)   \\
	& \leq (4 \pi^2 L_{g'}+ 2 \sqrt{3} C_{g'}(L_f + C_f \sqrt{2})) \, d(R,Q), \label{eqn:ineq1-K}
\end{align}
where for the last inequality we used $|\theta_{_{RU}} - \theta_{_{QU}}| \leq d(R,Q)$ by triangle inequality, and that $\theta_{_{RU}},\theta_{_{QU}} < \pi$. Finally, we set
\[
L := 4 \pi^2 L_{g'}+ 2 \sqrt{3} C_{g'}(L_f + C_f \sqrt{2}),
\] 
and use \eqref{eqn:X-diff} and \eqref{eqn:ineq1-K} for all $t\in [0,T)$ to get 
\begin{equation*}
\label{eq:Lip-flow}
	\|\V[\rho](R,t) - \V[\rho](Q,t) \|_F \leq L d(R,Q) \int_{\S} \d\rho_t(U) = L\, d(R,Q),
\end{equation*}
where we also used that $\rho_t$ is a probability measure on $\S$.
\end{proof}

Another step used to establish the well-posedness of solutions is the following lemma; see \cite[Lemma~3.15]{CanizoCarrilloRosado2011}, and also \cite[Theorem~4.1]{CaChHa2014}.

\begin{lem}\label{lem:grad-lip-2}
	Let $K$ satisfy \ref{hyp:K} and let $\rho,\sigma \in \Cont([0,T);\P(\S))$. Then,
	\begin{equation}
		\|\V[\rho]-\V[\sigma]\|_{L^\infty([0,T)\times \S)} \leq \Lip \, \bd_1(\rho,\sigma),
	 \label{eqn:X-Yest}
	\end{equation}
	where $\Lip$ is a constant that depends only on $C_f$, $L_f$, $C_{g'}$ and $L_{g'}$.
\end{lem}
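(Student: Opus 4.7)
The plan is to reduce the estimate to a uniform Lipschitz bound on the single-point kernel $Q \mapsto \grad K_Q(R)$ and then to pass from this pointwise bound to the Wasserstein estimate via the coupling characterization of $W_1$. Concretely, I will show that for each fixed $R \in \S$,
\begin{equation*}
  \|\grad K_{Q_1}(R) - \grad K_{Q_2}(R)\|_F \leq L' \, d(Q_1, Q_2), \qquad \mbox{for all } Q_1, Q_2 \in \S,
\end{equation*}
with a constant $L'$ independent of $R$ and depending only on $C_f, L_f, C_{g'}, L_{g'}$. Granting this, for any $(R, t) \in \S \times [0, T)$ and any $\pi_t \in \Pi(\rho_t, \sigma_t)$ one can write
\begin{equation*}
  \V[\rho](R,t) - \V[\sigma](R,t) = \int_{\S \times \S} \bigl( \grad K_{Q_2}(R) - \grad K_{Q_1}(R) \bigr) \d \pi_t(Q_1, Q_2),
\end{equation*}
pass the Frobenius norm under the integral, apply the Lipschitz bound, and minimize over couplings to obtain $\|\V[\rho](R,t) - \V[\sigma](R,t)\|_F \leq L' \, W_1(\rho_t, \sigma_t)$. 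Taking the supremum over $(R, t) \in \S \times [0,T)$ then yields \eqref{eqn:X-Yest} with $\Lip$ proportional to $L'$.

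The only non-trivial step is the Lipschitz estimate above. Combining \eqref{eqn:gradK-gen} with \eqref{eqn:gradd2} gives
\begin{equation*}
  \grad K_Q(R) = -g'(\theta_{RQ}^2) \, f(\theta_{RQ}) \, ( Q - R Q^T R ),
\end{equation*}
where $\theta_{RQ} = d(R,Q)$. I would then decompose $\grad K_{Q_1}(R) - \grad K_{Q_2}(R)$ by adding and subtracting a mixed product, so that it suffices to control (i) the scalar difference $|g'(\theta_{RQ_1}^2) f(\theta_{RQ_1}) - g'(\theta_{RQ_2}^2) f(\theta_{RQ_2})|$ and (ii) the matrix difference $\|(Q_1 - R Q_1^T R) - (Q_2 - R Q_2^T R)\|_F$. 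Item (i) is handled by the uniform Lipschitz bounds on $g'$ and $f$ on their compact domains, combined with the reverse triangle inequality $|\theta_{RQ_1} - \theta_{RQ_2}| \leq d(Q_1, Q_2)$. For item (ii), orthogonality of $R$ yields $\|R A R\|_F = \|A\|_F$ for any $A \in \R^{3\times 3}$, so
\begin{equation*}
  \|(Q_1 - R Q_1^T R) - (Q_2 - R Q_2^T R)\|_F \leq 2\|Q_1 - Q_2\|_F \leq 2\sqrt{2}\,d(Q_1,Q_2),
\end{equation*}
where the last inequality is \eqref{eqn:ineq-dist}. The remaining factors in each piece are bounded uniformly by $C_{g'} C_f$ and $2\sqrt{3}$ respectively, closing the estimate.

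The overall structure parallels that of Lemma \ref{lem:dist-flow-maps-K}, but with the source variable $Q$ varying and the base point $R$ held fixed; here it is orthogonality of $R$ rather than of the source matrix that tames the $R Q^T R$ terms. I do not foresee any substantive obstacle beyond this routine bookkeeping of constants, since the transition from a pointwise Lipschitz bound in $Q$ to the Wasserstein estimate is the standard coupling argument.
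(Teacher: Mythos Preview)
Your proposal is correct and follows essentially the same route as the paper: establish a uniform Lipschitz bound on $Q\mapsto\nabla K_Q(R)$ via the explicit formula \eqref{eqn:gradd2} and an add-and-subtract decomposition, then pass to $W_1$ through an optimal coupling. The only cosmetic difference is that the paper first bounds $\nabla d_Q^2(R)-\nabla d_U^2(R)$ and then layers in $g'$, whereas you treat the scalar factor $g'(\theta^2)f(\theta)$ in one step; both arrive at the same constants and the same use of orthogonality of $R$ to control $\|R(Q_1-Q_2)^TR\|_F$.
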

\begin{proof}
Take $R,Q,U \in \S$. By \eqref{eqn:gradd2}, one has:
\begin{align*}
\nabla d^2_Q(R) - \nabla d^2_U(R) &= -2 \log_R Q + 2 \log_R U \\
&= - f(\dRQ) (Q - RQ^T R) + f(\theta_{_{RU}})(U-RU^T R).
\end{align*}
Add and subtract $f(\theta_{_{RU}}) (Q - RQ^T R)$ to the above, to estimate:
\begin{multline}
\label{eqn:est2}
{\|  \nabla d^2_Q(R) - \nabla d^2_U(R) \|}_F \\
\leq | f(\theta_{_{RU}}) -  f(\dRQ) | {\| Q - RQ^T R\|}_F + | f(\theta_{_{RU}})|  {\| U-RU^T R - Q + RQ^T R \|}_F.
\end{multline}
For the second term in the right-hand-side, we use triangle inequality to get
\[
{ \| U-RU^T R - Q + RQ^T R \|}_F \leq {\| U-Q \|_F + \| R (Q - U)^T R \|}_F =2  {\| U-Q \|}_F.
\]
Then, from \eqref{eqn:est2}, using the Lipschitz property and bound of $f$, triangle inequality and the fact that a rotation matrix has Frobenius norm $\sqrt{3}$ we find:
\begin{align}
%\begin{aligned} 
{\|  \nabla d^2_Q(R) - \nabla d^2_U(R) \|}_F  & \leq 2 \sqrt{3} L_f | \theta_{_{RU}} -  \dRQ| + 2 C_f(\e)  {\| U-Q \|}_F \nonumber  \\
& \leq  (2 \sqrt{3} L_f + 2 \sqrt{2} C_f)\, \theta_{_{QU}},
\label{eqn:ineq2}
%\end{aligned}
\end{align}
where for the second inequality we also used $| \theta_{_{RU}} -  \dRQ| \leq  \theta_{_{QU}} $ by triangle inequality, and \eqref{eqn:ineq-dist}. For an interaction potential in the form \eqref{eqn:K-gen}, we compute:
	\begin{align*}
	\begin{aligned}
		 {\|\nabla K_Q(R) - \nabla K_U(R)\|}_F &= {\|g'(d(R,Q)^2) \nabla d^2_Q(R) - g'(d(R,U)^2) \nabla d^2_U(R)\|}_F \nonumber \\
		&\leq |g'(\dRQ^2) - g'(\theta_{_{RU}}^2)| {\|\nabla d^2_Q(R)\|}_F + |g'(\theta_{_{RU}}^2)| {\|\nabla d^2_Q(R) - \nabla d^2_U(R)\|}_F,	
	\end{aligned}
	\end{align*}
where we added and subtracted $g'(\theta_{_{RU}}^2) \nabla d^2_Q(R)$ on the first line and used triangle inequality. Then, using \eqref{eqn:ineq2}, the bound and Lipschitz constant of $g'$, and $|\nabla d^2_Q(R)|= 2 \dRQ$, we find:
\begin{align}
%\begin{aligned} 
& {\|\nabla K_Q(R) - \nabla K_U(R)\|}_F  \nonumber \\
& \hspace{0.5cm} \leq 2L_{g'} |\dRQ+\theta_{_{RU}}| |\dRQ-\theta_{_{RU}}| \dRQ+ 2 C_{g'}  (\sqrt{3} L_f + \sqrt{2} C_f)d(Q,U) \nonumber \\
&  \hspace{0.5cm} \leq (4\pi^2 L_{g'}  + 2 C_{g'}  (\sqrt{3} L_f + \sqrt{2} C_f))d(Q,U),
 \label{eqn:ineq2-K}	
%\end{aligned}
\end{align}
where for the second inequality we used $|\dRQ - \theta_{_{RU}}| \leq d(Q,U)$ by triangle inequality, and that $\dRQ,\theta_{_{RU}} <\pi$.

Now, for $(R,t) \in \S \times [0,T)$ arbitrary fixed, take $\pi_t \in \Pi(\rho_t,\sigma_t)$ to be an optimal transport plan between $\rho_t$ and $\sigma_t$, and estimate:
	\begin{align*}
		|\V[\rho](R,t) - \V[\sigma](R,t)| &= \left| \int_{\S} \nabla K_Q(R)\d\rho_t(Q) - \int_{\S} \nabla K_U(R) \d\sigma_t(U) \right|\\
		&= \bigg| \iint_{\S \times \S} \nabla K_Q(R) \d\pi_t(Q,U) - \iint_{\S \times \S} \nabla K_U(R) \d\pi_t(Q,U) \biggr|\\
		&\leq \iint_{\S \times \S} |\nabla K_Q(R) - \nabla K_U(R)| \d\pi_t(Q,U).
	\end{align*}
Then, using \eqref{eqn:ineq2-K} and \eqref{eqn:length}, we find:
	\begin{equation} \label{est:LipX}
		|\V[\rho](R,t) - \V[\sigma](R,t)| \leq \Lip  \iint_{\S \times \S} d(Q,U) \d\pi_t(Q,U) 
= \Lip W_1(\rho_t,\sigma_t)  \leq \Lip \bd_1(\rho,\sigma), 
	\end{equation}
where
\[
\Lip:= \frac{1}{\sqrt{2}} \left( 4 \pi^2 L_{g'} + 2 C_{g'}  (\sqrt{3} L_f + \sqrt{2} C_f) \right).
\]
We take the supremum in $(R,t)\in \S \times [0,T)$ on the left-hand side of \eqref{est:LipX} to derive the desired result.
\end{proof}

\begin{rem}
It is important to note that the upper bound on $\|\V[\rho]\|_{L^\infty(\S\times[0,T))}$ and the Lipschitz constant of $\V[\rho]$ in Lemma \ref{lem:dist-flow-maps-K}, as well as the Lipschitz constant $\Lip$ in Lemma \ref{lem:grad-lip-2}, do not depend on the densities $\rho$ and $\sigma$. This is a key observation used in the proofs of the local and global well-posedness of solutions.
\end{rem}

We now present the local well-posedness of solutions to model \eqref{eqn:model} on $SO(3)$, which is the main result of this section. The structure of the proof is based on the fixed-point argument used  by Canizo {\em et al.} \cite{CanizoCarrilloRosado2011} to prove the analogous result in the Euclidean case.
\begin{thm}[Well-posedness on $SO(3)$]\label{thm:well-posedness}
	Suppose that $K$ satisfies \ref{hyp:K} and let $\rho_0 \in \P(\S)$. Then, there exist $T>0$ and a unique weak solution in $\Cont([0,T);\P(\S))$ to the aggregation model \eqref{eqn:model} starting from $\rho_0$.
	\end{thm}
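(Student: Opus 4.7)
My approach is to realize the weak solution as a fixed point of the operator $\Gamma$ defined by $\Gamma(\rho)_t = \Psi^t_{\V[\rho]} \# \rho_0$ on a suitable complete metric subspace of $(\Cont([0,T);\P(\S)), \bd_1)$, and apply the Banach fixed-point theorem. A fixed point of $\Gamma$ satisfies \eqref{eq:rho-push-forward} and is, by Definition \ref{defn:solution}, exactly a weak solution. Three things need to be verified: (i) $\Gamma$ is well-defined with its domain invariant under $\Gamma$; (ii) $\Gamma$ is a strict contraction for $T$ small; (iii) the fixed point is unique among all weak solutions in $\Cont([0,T);\P(\S))$.

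The first step is the main conceptual obstacle, since $\S$ is open and the constants $C_\e$, $C_f$, $L_f$ blow up as the boundary of $\S$ is approached. To handle this, I would exploit that $\supp(\rho_0)$ is compact and contained in $\S$ to find $r_0 < \pi/2 - \e$ with $\supp(\rho_0)\subset \overline{D_{r_0}}$, pick some $r_1 \in (r_0, \pi/2-\e)$, and work in the closed subset
\begin{equation*}
X_T = \bigl\{\rho \in \Cont([0,T);\P(\S)) \,:\, \supp(\rho_t)\subset \overline{D_{r_1}}\ \text{for all}\ t\in[0,T)\bigr\}.
\end{equation*}
For $\rho\in X_T$, Lemma \ref{lem:dist-flow-maps-K} guarantees $\V[\rho]$ is bounded and Frobenius-Lipschitz on $\S$, so by the Cauchy--Lipschitz theorem on manifolds (from the Appendix) it generates a unique flow map on $\supp(\rho_0)\times [0,T)$. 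Integrating the speed bound $\|\V[\rho]\|_{L^\infty}\leq 2\pi C_{g'}$ shows the flow displaces each point by at most $2\pi C_{g'}\,t$ in geodesic distance, so choosing $T\leq (r_1-r_0)/(2\pi C_{g'})$ keeps the image of the flow inside $\overline{D_{r_1}}\subset\S$; this together with Lemma \ref{lem:preliminary}\ref{it:prel2} gives $\Gamma(\rho)\in X_T$.

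For the contraction estimate, I would chain the three lemmas in a single computation. Lemma \ref{lem:preliminary}\ref{it:prel1} reduces $W_1(\Gamma(\rho)_t,\Gamma(\sigma)_t)$ to the pointwise geodesic distance between the two flow maps applied at points of $\supp(\rho_0)$; Lemma \ref{lem:dist-flow-maps} then controls that distance by $\|\V[\rho]-\V[\sigma]\|_{L^\infty(\S\times[0,T))}$ with prefactor $(e^{C_\e T}-1)/C_\e$; and Lemma \ref{lem:grad-lip-2} dominates the sup-norm by $\Lip\,\bd_1(\rho,\sigma)$. Crucially, the bound on $\|\V[\rho]\|_{L^\infty}$ and the Lipschitz constant of $\V[\rho]$ appearing in $C_\e$, as well as $\Lip$, are \emph{independent} of $\rho$, so they are universal constants on $X_T$. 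Taking the supremum over $t\in[0,T)$ produces
\begin{equation*}
\bd_1(\Gamma(\rho),\Gamma(\sigma)) \leq \frac{e^{C_\e T}-1}{C_\e}\,\Lip\,\bd_1(\rho,\sigma),
\end{equation*}
which is a strict contraction once $T$ is chosen sufficiently small (compatibly with the previous constraint $T\leq (r_1-r_0)/(2\pi C_{g'})$). Banach's theorem then yields a unique fixed point in $X_T$. Finally, uniqueness in the full space $\Cont([0,T);\P(\S))$ follows because the velocity bound $2\pi C_{g'}$ and the associated displacement estimate apply to \emph{every} weak solution, so after possibly shrinking $T$ any weak solution is automatically an element of $X_T$, where uniqueness has already been established.
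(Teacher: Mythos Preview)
Your proof is correct and follows essentially the same fixed-point route as the paper: both construct $\Gamma(\sigma)_t = \Psi^t_{\V[\sigma]}\#\rho_0$ and chain Lemma \ref{lem:preliminary}\ref{it:prel1}, Lemma \ref{lem:dist-flow-maps} (with the uniform bounds of Lemma \ref{lem:dist-flow-maps-K}), and Lemma \ref{lem:grad-lip-2} to obtain the contraction estimate $\bd_1(\Gamma(\rho),\Gamma(\sigma)) \leq \alpha(T)\Lip\,\bd_1(\rho,\sigma)$ with $\alpha(T)=(e^{C_\e T}-1)/C_\e$. The only variation is in the domain of $\Gamma$: you restrict to curves supported in a fixed smaller closed disk $\overline{D_{r_1}}$ and use the explicit speed bound $2\pi C_{g'}$ to trap the flow there (then recover full uniqueness a posteriori), whereas the paper works directly on all of $\Cont([0,\tau);\P(\S))$ by invoking the uniform maximal flow time from the Cauchy--Lipschitz theorem (Remark \ref{rem:indep-max-time}) to ensure $\Psi^t_{\V[\sigma]}$ remains in $\S$.
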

\begin{proof}
Relevant for this proof are Theorem \ref{thm:Cauchy-Lip} and Lemma \ref{lem:interaction-complete} presented in Appendix. Fix a curve $\sigma(t)$ in $\P(\S)$. By Lemma \ref{lem:interaction-complete}, the interaction velocity field $\V[\sigma]$ is locally Lipschitz and hence it defines a local flow on $\S$. The maximal time of existence for this flow map does not depend on $\sigma$, as noted in Remark \ref{rem:indep-max-time}. Consequently, there exists a maximal time $\tau>0$ such that the map $\Gamma$, given by
	\begin{equation}
	\label{eqn:Gamma}
		\Gamma(\sigma)(t) = {\Psi_{\V[\sigma]}^t}\# \rho_0, \qquad \mbox{for all $\sigma \in \Cont([0,\tau);\P(\S))$ and $t \in [0,\tau)$},
	\end{equation}
	is well-defined, where $\Psi_{v[\sigma]}$ is the unique flow map generated by $(v[\sigma],\supp(\rho_0))$ and defined on $\supp(\rho_0)\times [0,\tau)$. The goal is to show that $\Gamma$ is a map from $\Cont([0,\tau);\P(\S))$ into itself and that it has a unique fixed point.
	
Fix $\sigma \in \Cont([0,\tau);\P(\S))$. By Theorem \ref{thm:Cauchy-Lip} we have $\Psi_{v[\sigma]}^t(x)\in \S$ for all $x\in\supp(\rho_0)$ and $t\in[0,\tau)$. Hence $\Gamma(\sigma)(t)$ is supported in $\S$ and moreover, by conservation of mass, $\Gamma(\sigma)(t)$ is a probability measure on $\S$ for all $t\in[0,\tau)$. Since the map $t \to \Gamma(\sigma)(t)$ is continuous (see Lemmas \ref{lem:dist-flow-maps-K} and \ref{lem:preliminary}\ref{it:prel2}), we conclude that $\Gamma$ maps $(\Cont([0,\tau);\P(\S)),\bd_1)$ into itself.
	
Next we show that $\Gamma$ is a contraction provided we restrict the final time $T\leq \tau$ as follows. Let $\rho, \sigma \in \Cont([0,\tau);\P(\S))$. Then, for all $t \in [0,\tau)$,
	\begin{align}
		W_1({\Psi_{\V[\rho]}^t}\#\rho_0,{\Psi_{\V[\sigma]}^t}\#\rho_0) &\leq \sup_{x \in\supp(\rho_0)} d(\Psi_{\V[\rho]}^t(x),\Psi_{\V[\sigma]}^t(x))   \\
		&\leq \alpha (t) \| \V[\rho] - \V[\sigma]\|_{L^\infty([0,\tau)\times \S)}  \\
		& \leq \alpha(t) \Lip \, \bd_1(\rho,\sigma),
		\label{eqn:estW1}
	\end{align}
	where for the first inequality we used Lemma \ref{lem:preliminary}\ref{it:prel1}, for the second inequality we used Lemmas \ref{lem:dist-flow-maps-K} and \ref{lem:dist-flow-maps} with
	\bes
	\alpha(t) =  \frac{e^{ \bigl(2\sqrt{6}\pi \, \frac{\tan(\pi/2-\e)}{\pi/2-\e} C_{g'} + \frac{L}{\sqrt{2}}\bigr) t} -1}{2\sqrt{6}\pi \, \frac{\tan(\pi/2-\e)}{\pi/2-\e} C_{g'} + \frac{L}{\sqrt{2}}},
	\ees
and for the last inequality we used Lemma \ref{lem:grad-lip-2}. Note that $\alpha(t)$ is increasing in $t$ and $\lim_{t \to 0} \alpha(t) = 0$. Hence, since $\Lip$ is independent of $t$, one can choose $T\leq \tau$ small enough such that 
\[
\alpha(t) \Lip < \alpha(T) \Lip < \overline{C}, \qquad \text{ for all } t \in [0,T),
\]
for some constant $\overline{C}<1$. 

By restricting $T$ according to inequality above, and by taking the supremum over $[0,T)$ in \eqref{eqn:estW1} we infer that:
	\bes
		\bd_1(\Gamma(\rho),\Gamma(\sigma)) \leq \overline{C}\,  \bd_1(\rho,\sigma).
	\ees
Since $\overline{C}<1$, this shows that the restriction of $\Gamma$ to $(\Cont([0,T);\P(\S)),\bd_1)$ is a contraction. Consequently, $\Gamma$ has a unique fixed point  $\rho \in \Cont([0,T);\P(\S))$, i.e.,
	\bes
		\rho_t = \Psi_{\V[\rho]}^t \# \rho_0, \qquad \mbox{for all $t \in [0,T)$},
	\ees
which is the desired weak solution of model \eqref{eqn:model}.
\end{proof}

\begin{rem}
\label{rem:global} The solution established in Theorem \ref{thm:well-posedness} can be extended in time as long as its support remains within the set $\S$. For purely attractive interaction potentials ($g'\geq 0$), we show in Proposition \ref{prop:inv-cont} below that $\S$ is an invariant set for the dynamics and hence, the well-posedness of solutions holds globally in time, i.e., $T=\infty$.
\end{rem}

%%%%%

\subsection{Particle solutions}
\label{subsect:particle}

The theory established in Section \ref{subsect:well-posedness} can be applied to particle solutions of model \eqref{eqn:model}. Specifically, we take a positive integer $\N$ and consider a collection of masses $m_i \in (0,1)$ and rotation matrices $R_{i}^0 \in \S$, $i =1,\dots,N$. The total mass of the particles is $1$, that is, ${\sum}_{i=1}^{\N} m_i = 1$. We introduce the empirical measure associated to this set of masses and points:
\begin{equation}
	\label{eq:atomic-initial}
	\rho^{\N}_0 =  \sum_{i=1}^{\N} m_i \delta_{R_{i}^0},
\end{equation}
and denote by $\rho^\N$ the solution to model \eqref{eqn:model} on the interval $[0,T)$ (as established by Theorem \ref{thm:well-posedness}) starting from $\rho^{\N}_0$.

It is a standard fact \cite{CanizoCarrilloRosado2011} that the solution $\rho^{\N}$ is the empirical measure associated to masses $m_i$ and trajectories $R_{i}(t)$, $i = 1,\dots,\N$, i.e.,
	\be\label{eq:atomic}
		\rho_t^{\N} = \sum_{i=1}^{\N} m_i \delta_{R_i(t)}, \qquad \mbox{for all $t\in [0,T)$},
	\ee
where the (unique) collection of trajectories $R_i\:[0,T) \to \S$ satisfies, for all $i\in \{1,\dots,\N\}$ and $t\in[0,T)$,
	\be\label{eq:characteristics-particles}
		\begin{cases}
			{\dot R}_i(t) = \V[\rho^{\N}](R_i(t),t),\\
			R_i(0) = R_{i}^0.
		\end{cases}
	\ee

An important result in applications is the approximation of a continuum measure by empirical measures, referred to as the mean-field approximation. We investigate this approximation below. First, we derive a stability result, analogous to \cite[Theorem 3.16]{CanizoCarrilloRosado2011}.

\begin{thm}[Stability]\label{thm:stability} Let $K$ be an interaction potential that satisfies \ref{hyp:K}. Consider two initial densities $\rho_0,\sigma_0 \in \P(\S)$, and let $\rho$ and $\sigma$ be weak solutions to \eqref{eqn:model} defined on $[0,T)$ starting from $\rho_0$ and $\sigma_0$, respectively. Then, there exist $T^*\in(0,T)$ and an increasing, bounded function $r(\e,\cdot)$ with $r(\e,0) = 1$ such that
\bes
	W_1(\rho_t,\sigma_t) \leq r(\e,t)W_1(\rho_0,\sigma_0),	\qquad \mbox{for all $t\in [0,T^*)$}.
\ees
\end{thm}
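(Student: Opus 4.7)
The plan is to recast both solutions through their defining push-forward identities, $\rho_t = \Psi^t_{\V[\rho]}\#\rho_0$ and $\sigma_t = \Psi^t_{\V[\sigma]}\#\sigma_0$, and to split the Wasserstein distance via the triangle inequality as
\[
W_1(\rho_t,\sigma_t) \leq W_1\!\bigl(\Psi^t_{\V[\rho]}\#\rho_0,\,\Psi^t_{\V[\rho]}\#\sigma_0\bigr) + W_1\!\bigl(\Psi^t_{\V[\rho]}\#\sigma_0,\,\Psi^t_{\V[\sigma]}\#\sigma_0\bigr),
\]
so that the first term captures how a common flow separates two different initial data, while the second captures how two distinct flows push the same initial datum apart. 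A preliminary step is to make sense of $\Psi^t_{\V[\rho]}\#\sigma_0$: since Lemma \ref{lem:dist-flow-maps-K} shows that $\V[\rho]$ is bounded and Lipschitz on $\S\times[0,T)$, the Cauchy--Lipschitz theorem extends the flow generated by $\V[\rho]$ from $\supp(\rho_0)$ to the compact set $\supp(\rho_0)\cup\supp(\sigma_0)\subset\S$ on some common short interval.

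For the first term I would apply Lemma \ref{lem:Lipschitz-initial}, which yields the Lipschitz estimate $d(\Psi^t_{\V[\rho]}(R),\Psi^t_{\V[\rho]}(Q))\leq e^{C_\e t}d(R,Q)$, combined with Lemma \ref{lem:preliminary}\ref{it:prel3}, to conclude
\[
W_1\!\bigl(\Psi^t_{\V[\rho]}\#\rho_0,\,\Psi^t_{\V[\rho]}\#\sigma_0\bigr) \leq e^{C_\e t}\,W_1(\rho_0,\sigma_0).
\]
For the second term I would combine Lemma \ref{lem:preliminary}\ref{it:prel1} with Lemma \ref{lem:dist-flow-maps} (applied to the vector fields $X=\V[\rho]$ and $Y=\V[\sigma]$, the required bounds supplied by Lemma \ref{lem:dist-flow-maps-K}) and then with Lemma \ref{lem:grad-lip-2}, to obtain
\[
W_1\!\bigl(\Psi^t_{\V[\rho]}\#\sigma_0,\,\Psi^t_{\V[\sigma]}\#\sigma_0\bigr) \leq \alpha(t)\,\|\V[\rho]-\V[\sigma]\|_{L^\infty([0,T)\times\S)} \leq \alpha(t)\,\Lip\,\bd_1(\rho,\sigma),
\]
where $\alpha(t)=(e^{C_\e t}-1)/C_\e$, exactly as in the proof of Theorem \ref{thm:well-posedness}.

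Summing the two estimates and taking the supremum over $s\in[0,t]$ produces
\[
\sup_{s\in[0,t]} W_1(\rho_s,\sigma_s) \leq e^{C_\e t}\,W_1(\rho_0,\sigma_0) + \alpha(t)\,\Lip\,\sup_{s\in[0,t]} W_1(\rho_s,\sigma_s).
\]
Since $\alpha$ is continuous and increasing with $\alpha(0)=0$, I would pick $T^*\in(0,T)$ small enough that $\alpha(T^*)\Lip<1$, absorb the last term into the left-hand side, and arrive at
\[
W_1(\rho_t,\sigma_t) \leq r(\e,t)\,W_1(\rho_0,\sigma_0), \qquad r(\e,t):=\frac{e^{C_\e t}}{1-\alpha(t)\Lip},
\]
for every $t\in[0,T^*)$. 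The function $r(\e,\cdot)$ is continuous, increasing and bounded on $[0,T^*)$, with $r(\e,0)=1$, as required. The main obstacle I anticipate is that the cross-flow term is controlled only by the global quantity $\bd_1(\rho,\sigma)$ rather than by $W_1(\rho_t,\sigma_t)$ at the current time; this is precisely what forces the supremum-absorption step and, consequently, the restriction to a possibly smaller interval $[0,T^*)$.
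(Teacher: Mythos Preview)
Your argument is correct, but it closes the estimate differently from the paper. Both proofs start by extending the two flows to the common compact set $\supp(\rho_0)\cup\supp(\sigma_0)$ and split $W_1(\rho_t,\sigma_t)$ via the triangle inequality through a mixed push-forward (you use $\Psi^t_{\V[\rho]}\#\sigma_0$, the paper uses $\Psi^t_{\V[\sigma]}\#\rho_0$; this is inessential). The difference is in how the cross-flow term is handled. You invoke Lemma \ref{lem:dist-flow-maps} in its integrated form, which bounds the flow discrepancy by $\alpha(t)\,\|\V[\rho]-\V[\sigma]\|_{L^\infty}$, and then Lemma \ref{lem:grad-lip-2} in its sup-in-time form, producing the global quantity $\sup_{s\leq t}W_1(\rho_s,\sigma_s)$ on the right; you then absorb this term, which forces the additional smallness constraint $\alpha(T^*)\Lip<1$ and yields $r(\e,t)=e^{C_\e t}/(1-\alpha(t)\Lip)$. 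The paper instead goes back to the differential inequality \eqref{eqn:est-total} underlying Lemma \ref{lem:dist-flow-maps}, integrates it with an integrating factor, and applies the pointwise-in-time estimate \eqref{est:LipX} inside the integral to obtain
\[
e^{-C_\e t} W_1(\rho_t,\sigma_t) \leq W_1(\rho_0,\sigma_0) + \Lip\int_0^t e^{-C_\e s} W_1(\rho_s,\sigma_s)\,\d s,
\]
and closes by Gronwall, giving the purely exponential $r(\e,t)=e^{(C_\e+\Lip)t}$. The paper's route is sharper: its $T^*$ is limited only by the common existence time of the extended flows, not by any contraction condition, and the resulting $r$ is explicit and valid on that full interval. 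Your route is nonetheless perfectly valid for the stated theorem and has the virtue of reusing, verbatim, the contraction machinery already set up in the proof of Theorem \ref{thm:well-posedness}.
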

\begin{proof}
We set $\Sigma = \supp(\rho_0)\cup\supp(\sigma_0)$. Then, by Theorem \ref{thm:Cauchy-Lip} and Lemma \ref{lem:interaction-complete}, there exist unique maximal flow maps $\tilde \Psi_{v[\rho]}$ and $\tilde \Psi_{v[\sigma]}$ generated by $(v[\rho],\Sigma)$ and $(v[\sigma],\Sigma)$, respectively. Denote by $\tau_\rho>0$ and $\tau_\sigma>0$ the respective maximal times of existence, and set $T^* = \min(\tau_\rho,\tau_\sigma,T)$.  We use triangle inequality to bound, for any $t \in [0,T^*)$,
	\begin{align}
	\begin{aligned} \label{est:stab1}
		W_1(\rho_t,\sigma_t) &= W_1(\tilde \Psi_{\V[\rho]}^t \# \rho_0,\tilde \Psi_{\V[\sigma]}^t \# \sigma_0)  \\
		&\leq W_1(\tilde \Psi_{\V[\rho]}^t \# \rho_0,\tilde \Psi_{\V[\sigma]}^t \# \rho_0) + W_1(\tilde \Psi_{\V[\sigma]}^t \# \rho_0,\tilde \Psi_{\V[\sigma]}^t \# \sigma_0).
	\end{aligned}
	\end{align}
W apply Lemma \ref{lem:Lipschitz-initial} for $X=\V[\sigma]$, which is bounded and Lipschitz continuous with respect to its first variable by Lemma \ref{lem:dist-flow-maps-K}. Infer that the map $\tilde \Psi_{\V[\sigma]}^t$ is Lipschitz continuous on $\S$ with Lipschitz constant $e^{C_\e[\sigma]t}$, where we use notation $C_\e[\sigma]$ for the constant in \eqref{eqn:Ce} with $X=\V[\sigma]$ and $\tau = T^*$.  Then use Lemma \ref{lem:preliminary}\ref{it:prel1} and Lemma  \ref{lem:preliminary}\ref{it:prel3}) to bound above the first and second terms in the right-hand side of \eqref{est:stab1}, respectively:
\begin{align}
%\begin{aligned}
W_1(\tilde \Psi_{\V[\rho]}^t \# \rho_0, & \tilde \Psi_{\V[\sigma]}^t \# \rho_0)  + W_1(\tilde \Psi_{\V[\sigma]}^t \# \rho_0,\tilde \Psi_{\V[\sigma]}^t \# \sigma_0) \nonumber  \\
& \leq \sup_{R \in\supp(\rho_0)} d(\tilde \Psi_{\V[\rho]}^t(R),\tilde \Psi_{\V[\sigma]}^t(R)) + e^{C_\e[\sigma] t} W_1(\rho_0,\sigma_0).
 \label{est:stab2}
%\end{aligned}
\end{align}
We further bound the first term in the right-hand-side of \eqref{est:stab2} as follows. Using the estimate \eqref{eqn:est-total} for the vector fields $\V[\rho]$ and $\V[\sigma]$, we integrate it with an integrating factor to find
\begin{align}
%\begin{aligned}
d(\tilde \Psi_{\V[\rho]}^t(R),\tilde \Psi_{\V[\sigma]}^t(R)) & \leq \int_0^t e^{C_\e[\sigma](t-s)} \| \V[\rho](\cdot,s) - \V[\sigma](\cdot,s) \|_{L^\infty(\S)} \d s  \nonumber \\
&\leq \Lip  \int_0^t e^{C_\e[\sigma](t-s)} W_1(\rho_s,\sigma_s) \d s,
\label{est:stab3}
%\end{aligned}
\end{align}
for all $R \in \supp(\rho_0)$ fixed, where for the second inequality we used \eqref{est:LipX}. Then, we combine \eqref{est:stab1}, \eqref{est:stab2} and \eqref{est:stab3} and multiply the resulting relation by $e^{-C_\e[\sigma]t}$ to obtain
	\bes
		e^{-C_\e[\sigma]t} W_1(\rho_t,\sigma_t) \leq \Lip \int_0^t e^{- C_\e[\sigma] s} W_1(\rho_s,\sigma_s) \d s + W_1(\rho_0,\sigma_0).
	\ees
Gronwall's lemma yields
	\bes
		e^{-C_\e[\sigma] t} \, W_1(\rho_t,\sigma_t) \leq e^{\Lip t} \, W_1(\rho_0,\sigma_0).
	\ees
As a last step, we use the expression for $C_\e[\sigma]$ and the upper bound for $\|\V[\sigma]\|_{L^\infty}$ from Lemma \ref{lem:dist-flow-maps-K}, to reach the desired inequality by setting:
\begin{equation}
\label{eqn:r}
r(\e,t):= e^{\left(\Lip+2\sqrt{6} \pi\, \frac{\tan(\pi/2-\e)}{\pi/2-\e} C_{g'} + \frac{L}{\sqrt{2}}\right)t}.
\end{equation}
\end{proof}

The mean-field limit is given by the following theorem.
\begin{thm}[Mean-field limit]\label{thm:mfl}
	Let $K$ be an interaction potential that satisfies \ref{hyp:K}. Consider an initial density $\rho_0 \in \P(\S)$ and let ${(\rho_0^{\N})}_{\N \in\mathbb{N}} \subset\P(\S)$ be of the form \eqref{eq:atomic-initial}, such that
	\bes
		W_1(\rho_0^{\N},\rho_0) \to 0, \qquad \mbox{ as $\N\to\infty$}.
	\ees
	Suppose that there exists $T>0$ such that $\rho$ and $\rho^{\N}$ are the unique weak solutions to model \eqref{eqn:model} on $[0,T)$, starting from $\rho_0$ and $\rho_0^{\N}$, respectively, for all $\N\in\mathbb{N}$ (see also \eqref{eq:atomic}). Then,  there exists $T^*\in(0,T)$ such that
	\bes
		%\bd_1(\rho^{\N},\rho) \to 0, \qquad \mbox{ as $\N\to\infty$}.
		 \sup_{t\in[0,T^*)} W_1(\rho_t^{\N},\rho_t)  \to 0, \qquad \mbox{ as $\N\to\infty$}.
	\ees
\end{thm}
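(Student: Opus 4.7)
The plan is to derive Theorem \ref{thm:mfl} as a direct consequence of the stability estimate in Theorem \ref{thm:stability}. The central observation is that the bound $r(\e,t)$ in \eqref{eqn:r} depends only on $\e$ and on the constants $C_f, L_f, C_{g'}, L_{g'}$ that arise in Lemmas \ref{lem:dist-flow-maps-K} and \ref{lem:grad-lip-2} (through $L$ and $\Lip$), and in particular it is independent of the specific densities to which stability is applied. This will allow the estimate to be used uniformly in $N$.

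First I would fix a time horizon $T^\ast \in (0,T)$ that is valid simultaneously for all sufficiently large $N$. For each pair $(\rho,\rho^N)$, Theorem \ref{thm:stability} produces a time $T^\ast_N = \min(\tau_\rho,\tau_{\rho^N},T)$, where $\tau_\rho$ and $\tau_{\rho^N}$ are the maximal times of existence of the flow maps generated by $(\V[\rho],\Sigma^N)$ and $(\V[\rho^N],\Sigma^N)$, with $\Sigma^N = \supp(\rho_0) \cup \supp(\rho_0^N)\subset\S$. By Lemma \ref{lem:dist-flow-maps-K}, both vector fields are bounded and Lipschitz on $\S$ with constants that do not depend on $N$, and by Remark 4.3 these bounds control the maximal time of existence from below in a way that is independent of $\rho$ and $\rho^N$. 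Hence a common lower bound $\tau_{\min}>0$ exists, and I can take $T^\ast = \min(\tau_{\min},T)/2$, say, so that $T^\ast \leq T^\ast_N$ for all $N$ large enough.

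Second, applying Theorem \ref{thm:stability} on $[0,T^\ast)$ yields
\[
W_1(\rho_t^N,\rho_t) \leq r(\e,t)\, W_1(\rho_0^N,\rho_0), \qquad \mbox{for all } t\in[0,T^\ast).
\]
Since $r(\e,\cdot)$ is increasing and therefore bounded above by $r(\e,T^\ast)$ on $[0,T^\ast)$, taking the supremum in $t$ gives
\[
\sup_{t\in[0,T^\ast)} W_1(\rho_t^N,\rho_t) \leq r(\e,T^\ast)\, W_1(\rho_0^N,\rho_0).
\]
Letting $N\to\infty$ and invoking the assumption $W_1(\rho_0^N,\rho_0)\to 0$ produces the conclusion.

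The main obstacle in this plan is the first step: securing a $T^\ast$ that is independent of $N$. This rests on the $N$-independent nature of the bounds in Lemma \ref{lem:dist-flow-maps-K} (explicitly emphasized in the remark immediately following it) and on the fact that all initial data $\rho_0^N$ live in the fixed set $\S$, so that their supports are uniformly contained in a compact subset of $\S$ on which the relevant constants are controlled. Once this uniformity is in hand, the remainder of the argument is a routine combination of the stability estimate with the hypothesis on the initial data.
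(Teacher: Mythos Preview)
Your proposal is correct and follows essentially the same route as the paper: apply the stability estimate of Theorem \ref{thm:stability}, use that the function $r(\e,\cdot)$ in \eqref{eqn:r} is independent of the particular pair of densities, bound the supremum in $t$ by $r(\e,T^\ast)$, and let $N\to\infty$. If anything, you are more explicit than the paper about why a single $T^\ast$ can be chosen uniformly in $N$ (via the $N$-independent bounds of Lemma \ref{lem:dist-flow-maps-K} and Remark \ref{rem:indep-max-time}); the paper simply asserts this uniformity and proceeds.
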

\begin{proof} We will use Theorem \ref{thm:stability} for $\rho_t^{\N}$ and $\rho_t$. Note that the function $r$ identified in \eqref{eqn:r} does not depend on the choice of the two densities. Therefore, by Theorem \ref{thm:stability}, we infer that there exists a strictly increasing, bounded function $r(\e,\cdot) \: [0,T^*) \to[0,\infty)$ such that
	\bes
		W_1(\rho_t^{\N},\rho_t) \leq r(\e,t) W_1(\rho_0^{\N},\rho_0), \qquad \mbox{for all $t\in[0,T^*)$ and $\N\in \mathbb{N}$}.
	\ees
The function $r(\e,\cdot)$ is bounded on $[0,T^*)$ and denote by $C_r(\e,T^*)>0$ such a bound. Then we get:
	\bes
		%\bd_1(\rho^{\N},\rho) = 
		\sup_{t\in[0,T^*)} W_1(\rho_t^{\N},\rho_t) \leq C_r(\e,T^*) W_1(\rho_0^{\N},\rho_0) \to 0, \qquad \mbox{ as $\N\to\infty$},
	\ees
	which concludes the proof.
\end{proof}

%%%%%%%%%%

\section{Global well-posedness and asymptotic behaviour}
\label{sect:global-cons}
In this section, we establish the global well-posedness of solutions and investigate the formation of asymptotic consensus, when the interaction potential is purely attractive, i.e., $g' \geq 0$.

%%%%%

\subsection{Invariant sets and global well-posedness}
\label{subsect:global}
We will show below that for attractive potentials, any closed disk in $\S$ is an invariant set for the dynamics and hence, the well-posedness from Theorem \ref{thm:well-posedness} can be extended globally in time.

\begin{prop}[Global well-posedness in continuum model]
\label{prop:inv-cont}
Let $K$ satisfy \ref{hyp:K} with $g'\geq0$, and suppose the initial datum $\rho_0 \in \P(\S)$ satisfies
\[ \supp(\rho_0)\subset \overline D_r \quad \mbox{for some $r<\pi/2-\e$}. \]
Then, there exists a unique global weak solution to \eqref{eqn:model} in $\Cont([0,\infty);\P(\S))$ that starts from $\rho_0$; moreover, $\supp(\rho_t) \subset \overline D_r$ for all $t\in[0,\infty)$.
\end{prop}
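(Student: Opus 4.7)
The plan is to establish the invariance of $\overline{D_r}$ under the dynamics; once this is shown, the local solution from Theorem \ref{thm:well-posedness} extends globally by iterating on successive intervals, since $\supp(\rho_t)$ stays in a fixed compact subset of $\S$. Since $\rho_t = \Psi^t_{\V[\rho]}\#\rho_0$, the support at time $t$ coincides with $\Psi^t_{\V[\rho]}(\supp(\rho_0))$, so it suffices to show that every particle trajectory starting in $\supp(\rho_0) \subseteq \overline{D_r}$ remains in $\overline{D_r}$. I would introduce
\[
V(t) = \sup_{R_0 \in \supp(\rho_0)} d(I, \Psi^t_{\V[\rho]}(R_0)),
\]
which is continuous by compactness of $\supp(\rho_0)$ and continuity of the flow map (Lemmas \ref{lem:Lipschitz-initial} and \ref{lem:preliminary}), and satisfies $V(0) \leq r$. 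The goal then reduces to showing that $V$ is non-increasing on $[0,T)$.

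The key ingredients are a direct derivative computation and a geodesic convexity inequality. Using $\nabla d_I^2(R) = -2\log_R I$, which follows from \eqref{eqn:gradd}, together with the definition \eqref{eqn:v-field} of $\V[\rho]$ and formula \eqref{eqn:gradK-gen}, any trajectory $R(t) = \Psi^t_{\V[\rho]}(R_0)$ satisfies
\[
\frac{\der}{\der t} d(I, R(t))^2 = -4 \int_{\S} g'(d(R(t),Q)^2) \, \log_{R(t)} I \cdot \log_{R(t)} Q \d\rho_t(Q).
\]
Since $r < \frac{\pi}{2} - \e$ lies strictly within the convexity radius $\frac{\pi}{2}$ of $SO(3)$, the function $R \mapsto d(I,R)^2$ is geodesically convex on $\overline{D_r}$. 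Testing this convexity along the geodesic from $R$ to $Q$ (whose initial velocity is $\log_R Q$) yields the pointwise inequality
\[
\log_R I \cdot \log_R Q \geq \tfrac{1}{2}\left(d(I,R)^2 - d(I,Q)^2\right), \qquad \text{for all } R, Q \in \overline{D_r}.
\]

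At each $t$ in the interval on which $\supp(\rho_t) \subseteq \overline{D_r}$, pick a particle $R^*(t) \in \Psi^t_{\V[\rho]}(\supp(\rho_0))$ attaining the supremum in $V(t)$; such a maximizer exists by compactness. Every $Q \in \supp(\rho_t)$ then satisfies $d(I,Q) \leq V(t) = d(I,R^*(t))$, so the convexity inequality renders the integrand in the derivative formula non-negative, and combined with $g' \geq 0$ this gives $\frac{\der}{\der t} d(I, R^*(t))^2 \leq 0$. A Danskin-type envelope result for the upper Dini derivative of the continuous family of $C^1$ functions $\{\, t \mapsto d(I, \Psi^t_{\V[\rho]}(R_0))^2 \,\}_{R_0 \in \supp(\rho_0)}$ then yields $D^+ V^2(t) \leq 0$, and since a continuous function with non-positive upper Dini derivative is non-increasing, $V$ is non-increasing on this interval. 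A standard bootstrap---the set $\{t \in [0,T) : V(s) \leq r \text{ for all } s \in [0,t]\}$ is closed by continuity and open by the monotonicity just established---gives $V(t) \leq r$ throughout $[0,T)$, so one can iterate Theorem \ref{thm:well-posedness} to extend the solution to all of $[0,\infty)$. The main delicate point I anticipate is the envelope argument for the non-smooth $V$, but this is a routine application of Danskin's theorem to the continuous family of $C^1$ trajectories parameterized by the compact set $\supp(\rho_0)$; everything else reduces to bookkeeping with the tools already developed in Sections \ref{sect:SO3} and \ref{sect:well-posedness}.
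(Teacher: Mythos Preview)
Your argument is correct but takes a genuinely different route from the paper's. The paper does \emph{not} work with the local solution from Theorem~\ref{thm:well-posedness} and show invariance a posteriori; instead it invokes the global Cauchy--Lipschitz framework of Theorem~\ref{thm:global-Cauchy-Lip} together with Lemma~\ref{lem:interaction-complete-global}, which verifies the transversality condition \eqref{eq:attractive-general} for the interaction field $\V[\sigma]$ whenever $\sigma$ is \emph{any} curve supported in $\overline{D_r}$. This lets the paper define the fixed-point map $\Gamma$ directly on $\Cont([0,\infty);\P(\overline{D_r}))$, with the invariance of $\overline{D_r}$ built into the construction rather than established afterward. Both approaches rest on the same geometric fact---that $\log_R I \cdot \log_R Q \geq 0$ when $R$ is outermost and $Q$ lies in $\overline{D_r}$, which is precisely the content of the proof of Lemma~\ref{lem:interaction-complete-global} (and is essentially your convexity inequality with only the sign retained). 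What your approach buys is self-containment: you avoid the appendix machinery of Theorems~\ref{thm:Cauchy-Lip}--\ref{thm:global-Cauchy-Lip} and Lemma~\ref{lem:interaction-complete-global}, at the price of the Danskin envelope argument for the non-smooth supremum $V(t)$. The paper's route is more modular and sidesteps any differentiability issues for $V$, since invariance is handled abstractly at the level of flow maps before the fixed point is constructed; it also makes transparent that the uniform time step (independent of $\rho_0$) needed for your iteration is available, which you correctly rely on but do not spell out.
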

\begin{proof} We resort to the global version of the Cauchy-Lipschitz theorem included in the Appendix; see Theorem \ref{thm:global-Cauchy-Lip} and specifically, Lemma \ref{lem:interaction-complete-global} for how the theorem applies to the interaction velocity field. Abusing the notation, denote by $\P(\overline D_r)$ the set of Borel probability measures on $\S$ that are supported in $\overline D_r$. 

Consider the map
\bes
	\Gamma(\sigma)(t) = \Psi_{v[\sigma]}^t \# \rho_0, \qquad \mbox{for all $\sigma \in \Cont([0,\infty);\P(\overline D_r))$ and $t\in[0,\infty)$},
\ees
where $\Psi_{v[\sigma]}^t$ is the unique global flow map generated by $(\V[\sigma],\supp(\rho_0))$. By Lemma \ref{lem:interaction-complete-global} this map is indeed defined for all $t \geq 0$. Also, by Theorem \ref{thm:global-Cauchy-Lip}, $\Psi_{v[\sigma]}^t(R) \in {\overline D_r}$ for all $R \in \supp \rho_0$ and $t \geq 0$, which implies that $\Gamma(\sigma)(t)$ is compactly supported in ${\overline D_r}$ for all $t \geq 0$.

Following the argument in the proof of Theorem \ref{thm:well-posedness}, we get that $\Gamma$ is a map from $(\Cont([0,\infty);\P(\overline D_r)),\bd_1)$ into itself. One also can infer the existence of a time $T>0$ such that the restriction of $\Gamma$ to $(\Cont([0,T);\P(\overline D_r)),\bd_1)$ is a contraction. Then, by the same fixed-point theorem procedure, there exists a unique $\rho \in \Cont([0,T);\P(\overline D_r))$ such that
	\bes
		\rho_t = \Psi_{\V[\rho]}^t \# \rho_0, \qquad \mbox{for all $t\in [0,T)$}.
	\ees	
We note that by the proof of Theorem \ref{thm:well-posedness}, the time $T$ is independent of $\rho_0$. Therefore, one can restart the procedure at time $T$ and then iteratively patch solutions through time to get the existence of a unique weak solution in $\Cont([0,\infty);\P(\overline D_r))$.
\end{proof}

Proposition \ref{prop:inv-cont} has an immediate analogue for the discrete model \eqref{eq:characteristics-particles}:

\begin{prop}[Global well-posedness in discrete model]
\label{prop:inv-disc}
Let $K$ satisfy \ref{hyp:K} with $g'\geq0$. Take a positive integer $\N$ and consider a collection of masses $m_i \in (0,1)$ with total mass $1$, and rotation matrices $R_{i}^0 \in \overline D_r$ for some $r<\pi/2-\e$, $i =1,\dots,\N$. Then, there exist unique  trajectories $R_i(t)$, $i =1,\dots,\N$ satisfying $R_i(t)\in \overline D_r$ and
\be\label{eqn:model-disc}
	\begin{cases} {\dot R}_i = -\displaystyle \sum_{j=1}^{\N} m_j \nabla K_{R_j}(R_i), \quad t > 0, \\
	R_i(0) = R_i^0, \end{cases}
\ee
for all $i\in \{1,\dots,\N \}$ and $t\in[0,\infty)$.
\end{prop}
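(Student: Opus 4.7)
The plan is to reduce Proposition \ref{prop:inv-disc} to its continuum counterpart (Proposition \ref{prop:inv-cont}) via the empirical-measure correspondence established in Section \ref{subsect:particle}. The key observation is that the right-hand side of \eqref{eqn:model-disc} is precisely the evaluation at $R_i$ of the interaction velocity field $\V[\rho^\N]$ associated to the empirical measure
\[
\rho_t^\N = \sum_{j=1}^\N m_j \delta_{R_j(t)},
\]
so that \eqref{eqn:model-disc} is exactly the characteristic system \eqref{eq:characteristics-particles}.

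First, I would form the initial empirical measure $\rho_0^\N = \sum_{i=1}^\N m_i \delta_{R_i^0}$. Since each $R_i^0 \in \overline{D_r}$ and $\sum m_i = 1$, $\rho_0^\N \in \P(\S)$ with $\supp(\rho_0^\N) \subset \overline{D_r}$. Proposition \ref{prop:inv-cont} then applies and yields a unique global weak solution $\rho^\N \in \Cont([0,\infty);\P(\S))$ starting from $\rho_0^\N$, with $\supp(\rho_t^\N) \subset \overline{D_r}$ for all $t \geq 0$. Invoking the standard fact recalled around \eqref{eq:atomic}--\eqref{eq:characteristics-particles}, this weak solution is itself an empirical measure,
\[
\rho_t^\N = \sum_{i=1}^\N m_i \delta_{R_i(t)},
\]
whose atoms $R_i(t)$ are transported by the flow map $\Psi^t_{\V[\rho^\N]}$, and therefore satisfy the ODE system \eqref{eq:characteristics-particles}. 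Expanding $\V[\rho^\N](R_i(t),t) = -\nabla K \ast \rho_t^\N(R_i(t)) = -\sum_j m_j \nabla K_{R_j(t)}(R_i(t))$ shows this system coincides with \eqref{eqn:model-disc}, and the support bound gives $R_i(t) \in \overline{D_r}$ for all $t \geq 0$.

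For uniqueness of the trajectories, suppose $(\tilde R_i)_{i=1}^\N$ is another collection of trajectories with values in $\overline{D_r}$ satisfying \eqref{eqn:model-disc} with the same initial data. Then $\tilde\rho_t := \sum_i m_i \delta_{\tilde R_i(t)}$ defines a curve in $\Cont([0,\infty);\P(\overline{D_r}))$, and a direct check (using \eqref{eqn:v-field}) shows that it is a weak solution in the sense of Definition \ref{defn:solution} with $\tilde\rho_0 = \rho_0^\N$. By the uniqueness asserted in Proposition \ref{prop:inv-cont}, $\tilde\rho_t = \rho_t^\N$ for all $t$, and comparing atoms gives $\tilde R_i(t) = R_i(t)$.

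The only delicate point will be the verification that the empirical-measure ansatz genuinely gives a weak solution in the sense of Definition \ref{defn:solution}, i.e., that the push-forward identity \eqref{eq:rho-push-forward} holds along the discrete trajectories. This, however, is the standard consistency between the particle ODE and the continuity equation (already cited in the excerpt after \eqref{eq:atomic-initial}), so no new analytic work is required; everything else is a direct specialization of the continuum result.
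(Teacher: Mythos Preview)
Your reduction to Proposition \ref{prop:inv-cont} via empirical measures is correct, but the paper proceeds differently: it applies the global Cauchy--Lipschitz theorem (Theorem \ref{thm:global-Cauchy-Lip}) directly to the discrete ODE system, checking the inward-pointing condition
\[
\log_Q I \cdot \nabla K_V(Q) \leq 0, \qquad Q \in \S \setminus D_r,\; V \in \overline{D_r},
\]
exactly as in the proof of Lemma \ref{lem:interaction-complete-global}. This avoids the detour through the measure-level fixed-point argument that underlies Proposition \ref{prop:inv-cont}; since the discrete system is a genuine ODE on a finite-dimensional manifold, one can invoke invariance and global existence directly without ever passing to $\P(\S)$. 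Your route has the conceptual advantage of exhibiting the discrete case as a strict specialization of the continuum result, but it costs you an extra step at the end: deducing $\tilde R_i(t) = R_i(t)$ from $\tilde\rho_t = \rho_t^\N$ requires a small continuity-and-labeling argument (or, more simply, an appeal to local ODE uniqueness on $\S^\N$, which follows from the Lipschitz bounds in Lemma \ref{lem:dist-flow-maps-K}). The paper's direct approach sidesteps this entirely.
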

\begin{proof}
The global well-posedness follows from Theorem \ref{thm:global-Cauchy-Lip} and the fact that 
\bes
	\log_Q I   \cdot \nabla K_{V}(Q) \leq 0, \qquad \mbox{for all $Q\in \S\setminus D_r$ and $V\in \overline D_r$};
\ees
see the proof of Lemma \ref{lem:interaction-complete-global}.
\end{proof}

%%%%%

\subsection{Asymptotic consensus in the continuum model}
\label{subsect:consensus-cont}
We define the following energy functional:
\begin{equation}
\label{eqn:energy-cont}
E[\rho]=\frac{1}{2}\int_{\S}\int_{\S} K(R, Q) \d\rho(R)\d\rho(Q), \qquad \text{ for all } \rho \in \P(\S).
\end{equation}
Model \eqref{eqn:model} is a gradient flow with respect to this energy\cite{FeZh2019}. For a fixed weak solution $\rho$ to \eqref{eqn:model}, we denote $E(t) := E[\rho(t)]$ and $\V(R,t) := \V[\rho](R,t)$, where $\V[\rho]$ is the interaction velocity field given by \eqref{eqn:v-field}. We present first some simple considerations regarding the asymptotic behaviour of $E(t)$ and its derivatives.

\begin{lem}\label{lem:E-lim}
Consider an interaction potential $K$ that satsifies \ref{hyp:K} with $g'\geq0$ and $g'$ continuously differentiable on $[0,4r^2]$. Suppose the initial datum $\rho_0\in\P(\S)$ satisfies 
\[ \supp(\rho_0)\subset \overline D_r \quad \mbox{for some $r<\pi/2-\e$}, \]
and let $\rho\in\Cont([0,\infty);\P(\S))$ be a global weak solution to \eqref{eqn:model} with the initial datum $\rho_0$. Then, one has
\[ \exists~E_\infty := \lim_{t \to \infty} E(t) \qquad \mbox{and} \qquad \sup_{0 \leq t < \infty}|{\ddot E}(t)| < \infty. \]
\end{lem}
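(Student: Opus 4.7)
The plan is to handle the two assertions separately: convergence of $E(t)$ will follow from the gradient-flow structure of \eqref{eqn:model}, while the uniform bound on $\ddot E$ will follow from the smoothness of the interaction data on the compact invariant set $\overline D_r$.

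For the existence of $E_\infty$, I would first derive the standard energy dissipation identity. Using the push-forward representation $\rho_t = \Psi^t_{\V[\rho]} \# \rho_0$, the symmetry $K(R,Q) = K(Q,R)$, and the definition of $\V[\rho]$ in \eqref{eqn:v-field}, direct differentiation of
\begin{equation*}
E(t) = \frac{1}{2} \int_{\S} \int_{\S} K(\Psi^t_{\V[\rho]}(R_0),\Psi^t_{\V[\rho]}(Q_0)) \,\d\rho_0(R_0)\,\d\rho_0(Q_0)
\end{equation*}
yields
\begin{equation*}
\dot E(t) = -\int_{\S} |\V[\rho](R,t)|^2 \,\d\rho_t(R) \leq 0.
\end{equation*}
Since $\supp(\rho_t) \subset \overline D_r$ for all $t\geq 0$ by Proposition \ref{prop:inv-cont}, and since $K = g(d^2)$ is continuous on the compact set $\overline D_r \times \overline D_r$, the functional $E(t)$ is bounded. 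Being monotone non-increasing and bounded below, it converges to some $E_\infty$ as $t \to \infty$.

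For the uniform bound on $\ddot E$, I would pull $\dot E$ back to the initial measure via the flow map, writing
\begin{equation*}
\dot E(t) = -\int_{\S} |V(t,R_0)|^2 \,\d\rho_0(R_0), \qquad V(t,R_0) := \V[\rho](\Psi^t_{\V[\rho]}(R_0),t),
\end{equation*}
so that
\begin{equation*}
\ddot E(t) = -2\int_{\S} V(t,R_0) \cdot \partial_t V(t,R_0) \,\d\rho_0(R_0).
\end{equation*}
Lemma \ref{lem:dist-flow-maps-K}(i) gives $|V(t,R_0)| \leq 2\pi C_{g'}$, so it suffices to bound $|\partial_t V(t,R_0)|$ uniformly. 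Using the push-forward once more to express
\begin{equation*}
V(t,R_0) = -\int_{\S} \nabla K_{Q(t)}(R(t)) \,\d\rho_0(Q_0),
\end{equation*}
where $R(t) := \Psi^t_{\V[\rho]}(R_0)$ and $Q(t) := \Psi^t_{\V[\rho]}(Q_0)$ both remain in $\overline D_r$, I would then differentiate the explicit formula $\nabla K_Q(R) = -g'(d(R,Q)^2) f(d(R,Q))(Q - RQ^T R)$ obtained by combining \eqref{eqn:gradK-gen} and \eqref{eqn:gradd2}. The resulting expression for $\partial_t [\nabla K_{Q(t)}(R(t))]$ is a sum of products of $g'(d^2)$, $g''(d^2)$, $f(d)$, $f'(d)$, $R$, $Q$, $\dot R$, $\dot Q$; each factor is bounded, because $g'$ and $g''$ are continuous on the compact interval $[0,4r^2]$, $f$ and $f'$ are bounded by $C_f$ and $L_f$ on $[0,2r]\subset[0,\pi-2\e]$, and $|\dot R|,|\dot Q| \leq 2\pi C_{g'}$ by Lemma \ref{lem:dist-flow-maps-K}(i) again.

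The main subtlety is that the time derivative of $\nabla K_{Q(t)}(R(t))$ must be computed in the ambient space $\R^{3\times 3}$, using the chain rule with both $R$ and $Q$ moving, rather than covariantly on the manifold. The crucial structural reason every derivative remains bounded is that the support stays strictly inside the convexity radius of $SO(3)$: since $r<\pi/2-\e$, one has $d(R(t),Q(t)) \leq 2r < \pi-2\e$ by the triangle inequality, so $f$, the Riemannian logarithm $\log_R Q$, and $g'$ are all smooth with bounded derivatives on the relevant region, with no cut-locus singularity to worry about.
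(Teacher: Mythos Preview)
Your proposal is correct and follows essentially the same approach as the paper: derive the energy dissipation identity $\dot E(t)=-\int|\V[\rho]|^2\,\d\rho_t$ via the push-forward formulation to get monotonicity and hence convergence, then pull $\dot E$ back to $\rho_0$, differentiate once more, and bound all resulting terms using that the support stays in the compact convex set $\overline D_r$. The only cosmetic difference is that you bound $\partial_t\bigl[\nabla K_{Q(t)}(R(t))\bigr]$ via the explicit ambient formula $-g'(d^2)f(d)(Q-RQ^TR)$, whereas the paper phrases the same bound more abstractly in terms of $g'$, $g''$, $\nabla d_Q^2$, $\Hess d_Q^2$ and $\der\log_R Q$; both arguments rest on the same fact that $(R,Q)\mapsto d(R,Q)^2$ is smooth on $\overline D_r\times\overline D_r$.
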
 
\begin{proof}
\noindent (i)~We denote by $\Psi_\V$ the global flow map generated by $\V[\rho]$ on $\supp(\rho_0)$. Then the derivative $E'(t)$ can be calculated by the push-forward formulation of $\rho$, the chain rule and the symmetry of $K$, as follows:
\begin{align}
\begin{aligned} \label{eqn:dEdt-cont}
\frac{\d}{\d t}E(t)&=\frac{1}{2}\frac{\d}{\d t}\int_{\S}\int_{\S} K(\Psi_v^t(R), \Psi_v^t(Q))\d\rho_0(R) \d\rho_0(Q)  \\
&=\int_{\S} \nabla K*\rho_t(\Psi_v^t(R))\cdot \V(\Psi_v^t(R), t)\ d\rho_0(R)  \\
&=-\int_{\S} {| \V(\Psi_v^t(R), t)|}^2 \d\rho_0(R)  \\
&=-\int_{\S} {|\V(R, t)|}^2 \d\rho_t(R)\leq 0.
\end{aligned}
\end{align}
Note that by Lemma \ref{lem:dist-flow-maps-K}, ${\dot E}(t)$ is bounded, and the map $t\mapsto E(t)$ is bounded below (as $\supp(\rho_t)\subset\overline D_r$ and $K$ is bounded on compact sets). Moreover by \eqref{eqn:dEdt-cont} $t\mapsto E(t)$ is nonincreasing. Hence we have the first assertion. \newline

\noindent (ii)~For the second assertion, we use \eqref{eqn:dEdt-cont} to calculate ${\ddot E}(t)$:
\begin{equation} \label{eq:E-second}
	{\ddot E}(t) = - \frac{\der}{\der t}  \int_{\S} | \V(\Psi_{\V}^t(R),t)|^2 \d\rho_0(R) = -2 \int_\S \frac{\der}{\der t} \V(\Psi_{\V}^t(R),t) \cdot  \V(\Psi_{\V}^t(R),t) \d\rho_0(R).
\end{equation}
Next, the time derivative in the integrand of \eqref{eq:E-second} can be computed using the definition of $\V$ along with the push-forward formulation:
\be
\label{eq:E-second2}
\frac{\der}{\der t} \V(\Psi_{\V}^t(R),t) = - \int_\S  \frac{\der}{\der t} \grad K_{\Psi_{\V}^t(Q)}(\Psi_{\V}^t(R)) \d\rho_0(Q).
\ee
To show that ${\ddot E}$ is bounded, it is enough to show that the expression in  \eqref{eq:E-second2} is bounded (note that $\V$ is bounded by Lemma \ref{lem:dist-flow-maps-K}). This can be shown by applying the product and chain rules to compute the integrand in \eqref{eq:E-second2}. The calculation leads to terms involving $g'(d(R,Q)^2)$, $g''(d(R,Q)^2)$, as well as derivatives involving the distance function, specifically $\grad d_Q^2(R)$, $\Hess d_Q^2(R)$ and $ \der\log_R(Q)$. The former is bounded on $\overline D_r\times \overline D_r$ by the assumption on $g$. The latter is also bounded, as the map $(R,Q)\mapsto d_Q^2(R)$ is smooth on the compact (and geodesically convex) set $\overline D_r\times \overline D_r$. We conclude from these considerations that ${\ddot E}$ is bounded on $[0,\infty)$.
\end{proof} 
 
We use the result above and Barbalat's lemma to prove the following proposition.
\begin{prop}
\label{prop:Barbalat}
	Let $K$ satisfy \ref{hyp:K}, with $g'\geq0$ and $g'$ continuously differentiable on $[0,4r^2]$, and suppose the initial datum $\rho_0\in\P(\S)$ satisfies 
\[ \supp(\rho_0)\subset \overline D_r, \quad  \mbox{with $r<\pi/2-\e$}. \]
Let $\rho\in\Cont([0,\infty),\P(\S))$ be the global weak solution to \eqref{eqn:model} with the initial datum $\rho_0$. Then, one has
\[
\lim_{t\rightarrow\infty}\int_\S |\V(R, t)|^2 \d\rho_t(R) = 0.
\]
\end{prop}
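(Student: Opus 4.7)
The plan is to apply Barbalat's lemma to the energy functional $E(t)$, using the two ingredients already supplied by Lemma \ref{lem:E-lim}. Indeed, formula \eqref{eqn:dEdt-cont} shows that
\[
	\dot E(t) = -\int_\S |\V(R,t)|^2 \d\rho_t(R),
\]
so proving the proposition is equivalent to proving that $\dot E(t) \to 0$ as $t\to\infty$.

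First, I would note that since $\dot E \leq 0$ and $E(t)$ admits a finite limit $E_\infty$ by Lemma \ref{lem:E-lim}, integrating $\dot E$ gives
\[
	\int_0^\infty |\dot E(t)|\, \d t = E(0) - E_\infty < \infty,
\]
so $\dot E \in L^1(0,\infty)$. Next, since $|\ddot E(t)|$ is uniformly bounded on $[0,\infty)$ by Lemma \ref{lem:E-lim}, the mean value theorem implies that $\dot E$ is Lipschitz, hence uniformly continuous, on $[0,\infty)$.

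With these two facts in hand, Barbalat's lemma (a nonnegative, uniformly continuous function whose integral on $[0,\infty)$ is finite must tend to zero) yields $\dot E(t) \to 0$ as $t\to\infty$, which is exactly the desired conclusion
\[
	\lim_{t\to\infty} \int_\S |\V(R,t)|^2 \d\rho_t(R) = 0.
\]

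There is no real obstacle here: the heavy lifting has already been done in Lemma \ref{lem:E-lim}, where the dissipation identity and the bound on $\ddot E$ were established. The only mild care needed is to ensure a clean citation or statement of Barbalat's lemma (presumably placed in the Appendix of the paper), and to make sure that the integrability $\dot E \in L^1$ is phrased in terms of the nonnegative function $-\dot E = \int_\S |\V|^2 \d\rho_t$ so that the standard form of the lemma applies directly.
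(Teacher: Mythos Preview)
Your proposal is correct and follows essentially the same approach as the paper: invoke Lemma \ref{lem:E-lim} for the finite limit of $E$ and the bound on $\ddot E$, then apply Barbalat's lemma to conclude $\dot E(t)\to 0$, which by \eqref{eqn:dEdt-cont} is the desired statement. The only cosmetic difference is that the paper quotes Barbalat's lemma in its classical form (finite limit of $E$ plus uniform continuity of $\dot E$ implies $\dot E\to 0$), whereas you unpack the equivalent $L^1$ version; either phrasing is fine.
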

\begin{proof}
	By Lemma \ref{lem:E-lim} we know that $E(t)$ has a finite limit as $t \to \infty$ and ${\ddot E}(t)$ is bounded on $[0,\infty)$. By Barbalat's lemma \cite{Barbalat1959} we then conclude that ${\dot E}(t) \to 0$ as $t \to \infty$. The conclusion now follows from the expression of ${\dot E}(t)$ given by \eqref{eqn:dEdt-cont}.
\end{proof}
For future reference we also list the following immediate corollary.
\begin{cor}\label{L1.1} With the assumptions and notations of Proposition \ref{prop:Barbalat}, one has
\[
\lim_{t\to\infty}\int_{\S}{\| \V(R, t)\|}_{_F} \d\rho_t(R)=0.
\]
\end{cor}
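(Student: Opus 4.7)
The plan is to deduce the corollary directly from Proposition \ref{prop:Barbalat} by a single application of the Cauchy--Schwarz inequality, combined with the relation between the Frobenius and Riemannian norms on tangent vectors.

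More precisely, recall from \eqref{eqn:length} that for any tangent vector $RA \in T_R SO(3)$ one has ${\|RA\|}_F = \sqrt{2}\,|RA|$, so
\bes
	{\| \V(R,t) \|}_F^2 = 2\,|\V(R,t)|^2, \qquad \text{for all } (R,t) \in \S \times [0,\infty).
\ees
Since $\rho_t$ is a probability measure on $\S$, the Cauchy--Schwarz inequality yields
\bes
	\int_\S {\|\V(R,t)\|}_F \,\d\rho_t(R) \leq \left( \int_\S {\|\V(R,t)\|}_F^2 \,\d\rho_t(R) \right)^{\!1/2}\!\left( \int_\S \d\rho_t(R) \right)^{\!1/2} = \sqrt{2}\left( \int_\S |\V(R,t)|^2 \,\d\rho_t(R) \right)^{\!1/2}.
\ees

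By Proposition \ref{prop:Barbalat}, the right-hand side tends to $0$ as $t \to \infty$, and this gives the desired conclusion. No obstacle is expected; the only care needed is the factor of $\sqrt{2}$ relating the two norms, which is tracked via \eqref{eqn:length}.
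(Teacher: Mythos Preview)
Your proof is correct and follows essentially the same approach as the paper: relate the Frobenius norm to the Riemannian norm via \eqref{eqn:length}, apply Cauchy--Schwarz with respect to the probability measure $\rho_t$, and invoke Proposition~\ref{prop:Barbalat}. If anything, your version is slightly cleaner, since you correctly use $\int_\S \d\rho_t(R)=1$ for the second factor, whereas the paper writes $|\S|^{1/2}$ there (which is harmless but imprecise).
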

\begin{proof}  We use the relation  ${\| \V(R, t)\|}_F = \sqrt{2}\, |\V(R, t)|$ and the Cauchy-Schwarz inequality to get
\begin{align}
%\begin{aligned} 
\int_{\S}\| \V(R, t)\|_F \d\rho_t(R)  &\leq \left(\int_{\S}{\|\V(R, t)\|}^2_F \d\rho_t(R)\right)^{1/2}\cdot|\S|^{1/2} \nonumber \\
& = \sqrt{2}  \left(\int_{\S}{ |\V(R, t)|}^2 \d\rho_t(R)\right)^{1/2}\cdot|\S|^{1/2}.
\label{NNN-1}
%\end{aligned}
\end{align}
Then, by Proposition \ref{prop:Barbalat}, the right-hand-side of \eqref{NNN-1} tends to zero as $t \to \infty$, and we obtain the desired result.
\end{proof}

We now focus the attention on the asymptotic behaviour of solutions to the continuum model. By \eqref{eqn:v-field}, \eqref{eqn:gradK-gen} and \eqref{eqn:gradd2}, we express the vector field $\V$ as
\begin{align}
%\begin{aligned} 
\V(R,t)&=2 \int_{\S} g'(d(R,Q)^2) \log_R Q \d\rho_t(Q) \nonumber \\
%&=2\int_{\S} g'(d(R,Q)^2) R\log(R^T Q) \d\rho_t(Q)\\
%&=\int_{\S} g'(d(R,Q)^2) \frac{d(R,Q)}{\sin d(R,Q)} R(R^T Q-Q^TR) \d\rho_t(Q)\\
&=\int_{\S} \frac{d(R,Q)}{\sin d(R,Q)} g'(d(R,Q)^2)(Q-RQ^TR) \d\rho_t(Q). 
\label{eqn:v-SO3}
%\end{aligned}
\end{align}
We make the following notation:
\begin{equation}
\label{eqn:G}
\G(s):= 
\begin{cases}
\displaystyle \frac{s}{\sin s} \, g'(s^2), \quad &\text{ for } s>0, \\[5pt]
\displaystyle g^{\prime}(0), \quad &\text{ for}~s = 0.
\end{cases}
\end{equation}
Note that $\G$ has the same sign as $g'$, which is assumed to be non-negative. Using this notation we rewrite $\V$ as
\[
\V(R,t)=\int_{\S} \G(d(R,Q)) (Q-RQ^T R)\d\rho_t(Q), \qquad \text{ for all } (R,t) \in \S \times [0,\infty).
\]
We also set
\begin{equation}
\label{eqn:C}
\C(R, t):=\int_{\S} \G(d(R, Q)) Q \d\rho_t(Q),
\end{equation}
and express $\V$ from \eqref{eqn:v-SO3} as:
\begin{equation}
\label{eqn:vC}
\V(R,t)=\C(R,t)- R \C(R,t)^T R.
\end{equation}
In this section, we will make the following assumptions on the function $\G$:
\begin{equation}\label{eqn:G-hyp1}
	\text{ For any } \a>0 \text{ there exists } 0<\b<\a, \text { such that }\G(b) > 0,
\end{equation}   
and 
\begin{equation} \label{eqn:G-hyp2}
	\G \text{ is non-decreasing, i.e., } h(s_2) \geq h(s_1) \text{ for all } s_2 \geq s_1.
\end{equation}

\begin{rem}
\label{rem:gG} 
In terms of the interaction function $g$, conditions \eqref{eqn:G-hyp1} and \eqref{eqn:G-hyp2} are satisfied provided one can find an arbitrarily small $s>0$ such that $g'(s)>0$, and $g'$ is non-decreasing. These properties are satisfied by a wide range of interaction potentials, including power-law potentials, see the examples discussed at the end of this section.
\end{rem}

For simplicity, we will omit from the calculations below the dependence on $t$ of $\C$, and we will reinstate it back when necessary.

\begin{lem}
\label{lemma:TrC-prop1} 
Let $\rho_t \in \P(\S)$ be fixed and assume $\G$ satisfies \eqref{eqn:G-hyp2}. Then $C$ defined by \eqref{eqn:C} (with dependence on $t$ omitted) satisfies
\begin{equation}
\label{eqn:tr-ineq}
\mathrm{tr}\left(\C(R_1)R_2^T+\C(R_2)R_1^T \right)\geq \mathrm{tr}\left(\C(R_1)R_1^T+\C(R_2)R_2^T\right) \quad \text{ for all } R_1,R_2 \in \S.
\end{equation}
\end{lem}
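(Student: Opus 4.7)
The strategy is to expand the trace quantities using the definition of $\C$, rearrange terms so the desired inequality becomes a single integral of a product of two differences, and then observe that assumption \eqref{eqn:G-hyp2} together with the formula \eqref{eqn:geod-dist} for the geodesic distance forces the integrand to be pointwise non-negative. The role of \eqref{eqn:G-hyp1} is not needed here; only monotonicity of $\G$ plays a role.

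Concretely, I would first substitute \eqref{eqn:C} into each of the four traces and use cyclicity of the trace to write
\[
\mathrm{tr}(\C(R_i)R_j^T)=\int_\S \G(d(R_i,Q))\,\mathrm{tr}(R_j^T Q)\,\d\rho_t(Q),\qquad i,j\in\{1,2\}.
\]
Subtracting the right-hand side of \eqref{eqn:tr-ineq} from the left-hand side and grouping the four resulting integrals into a single integral over $\S$, the statement reduces to
\[
\int_\S \bigl[\G(d(R_1,Q))-\G(d(R_2,Q))\bigr]\bigl[\mathrm{tr}(R_2^T Q)-\mathrm{tr}(R_1^T Q)\bigr]\d\rho_t(Q)\ \geq\ 0.
\]
Thus it suffices to show that the integrand is non-negative for every $Q\in\S$.

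To see this, fix $Q\in\S$ and invoke the explicit geodesic distance formula \eqref{eqn:geod-dist}, which gives $d(R_i,Q)=\operatorname{acos}((\mathrm{tr}(R_i^T Q)-1)/2)$. Since $\operatorname{acos}$ is strictly decreasing on $[-1,1]$, the map $R\mapsto d(R,Q)$ is a decreasing function of $\mathrm{tr}(R^T Q)$. Consequently, if $\mathrm{tr}(R_2^T Q)\geq \mathrm{tr}(R_1^T Q)$ then $d(R_1,Q)\geq d(R_2,Q)$, and by the monotonicity assumption \eqref{eqn:G-hyp2}, $\G(d(R_1,Q))\geq \G(d(R_2,Q))$; the reverse direction is analogous. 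The two bracketed factors in the integrand therefore carry the same sign (or one of them vanishes), making their product non-negative. Integrating against the probability measure $\rho_t$ preserves non-negativity and yields \eqref{eqn:tr-ineq}.

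There is no real obstacle in this argument — the only point demanding some care is matching transpositions correctly when expanding the traces, since $\C(R_i)$ involves $Q$ (not $Q^T$) inside the integral. Once this bookkeeping is in place, the proof reduces to the elementary observation that $\G\circ d(\cdot,Q)$ and $-\mathrm{tr}((\cdot)^T Q)$ are co-monotone functions of the argument, which is the $SO(3)$ analogue of a Chebyshev-type rearrangement inequality.
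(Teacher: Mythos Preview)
Your proposal is correct and follows essentially the same approach as the paper's proof: expand using the definition of $\C$, combine into a single integral of the product $\bigl[\G(d(R_1,Q))-\G(d(R_2,Q))\bigr]\bigl[\mathrm{tr}(R_2^T Q)-\mathrm{tr}(R_1^T Q)\bigr]$, and then use \eqref{eqn:G-hyp2} together with the fact that $d(\cdot,Q)$ is decreasing in $\mathrm{tr}((\cdot)^T Q)$ to conclude pointwise non-negativity of the integrand. The paper writes $\mathrm{tr}(Q R_j^T)$ rather than $\mathrm{tr}(R_j^T Q)$, but these coincide, so the arguments are identical.
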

\begin{proof}
By definition \eqref{eqn:C} of $\C$, we have
\begin{align*}
&\C(R_1)R_2^T+\C(R_2)R_1^T-\C(R_1)R_1^T-\C(R_2)R_2^T\\
&\qquad \qquad = \int_{\S}\left(\G(d(R_1, Q)) Q(R_2^T-R_1^T)-\G(d(R_2, Q)) Q(R_2^T-R_1^T)\right) \d\rho_t(Q)\\
&\qquad \qquad = \int_{\S}( \G(d(R_1, Q))-\G(d(R_2, Q)))(Q R_2^T- Q R_1^T) \d\rho_t(Q).
\end{align*}
From this calculation, we derive
\begin{align}
\begin{aligned}  \label{eqn:C-diff}
&\mathrm{tr}\left(\C(R_1)R_2^T+\C(R_2)R_1^T-\C(R_1)R_1^T-\C(R_2)R_2^T\right) \\
& \hspace{1cm} = \int_{\S}( \G(d(R_1, Q))-\G(d(R_2, Q)))\left(\mathrm{tr}(Q R_2^T)-\mathrm{tr}(Q R_1^T)\right)\d\rho_t(Q).
\end{aligned}
\end{align}
By \eqref{eqn:G-hyp2} and since $d(R,Q)$ is a decreasing function of $\mathrm{tr}(R^T Q)$ (see \eqref{eqn:geod-dist}), one has
\[
(\G(d(R_1, Q))-\G(d(R_2, Q)))(\mathrm{tr}(Q R_2^T)-\mathrm{tr}(Q R_1^T))\geq 0.
\]
The conclusion now follows from this observation and \eqref{eqn:C-diff}.
\end{proof}

\begin{lem} 
\label{lemma:TrC-prop2} 
Let $\rho_t \in \P(\S)$ be fixed and assume $\G$ satisfies \eqref{eqn:G-hyp2}. Then $C$ defined by \eqref{eqn:C} (with dependence on $t$ omitted) satisfies
\begin{align}
\begin{aligned}\label{A-2}
&\mathrm{tr}\big( \C(R_1)R_1^T(R_1-R_2)(R_1-R_2)^T+\C(R_2)R_2^T(R_1-R_2)(R_1-R_2)^T\big)\\
&\hspace{1cm} \leq \sqrt{3}\left( \|\C(R_1)-R_1\C(R_1)^TR_1\|_{_F}+\|\C(R_2)-R_2\C(R_2)^TR_2\|_{_F}\right).
\end{aligned}
\end{align}
\end{lem}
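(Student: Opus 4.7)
The plan is to expand the symmetric matrix $(R_1-R_2)(R_1-R_2)^T = 2I - R_1R_2^T - R_2R_1^T$ (using $R_iR_i^T = I$), decompose the left-hand side into a handful of basic trace quantities, and control them by combining Lemma \ref{lemma:TrC-prop1} with a Frobenius Cauchy--Schwarz bound. Introducing the shorthand $\phi_{ij} := \mathrm{tr}(\C(R_i)R_j^T)$ and $\psi_{ij} := \mathrm{tr}(\C(R_i)R_i^T R_j R_i^T)$, each summand on the left-hand side takes the form
\[
\mathrm{tr}\bigl(\C(R_i)R_i^T(R_1-R_2)(R_1-R_2)^T\bigr) = 2\phi_{ii} - \phi_{ij} - \psi_{ij}, \qquad i \ne j,
\]
so the whole left-hand side equals $2(\phi_{11}+\phi_{22}) - (\phi_{12}+\phi_{21}) - (\psi_{12}+\psi_{21})$. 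Lemma \ref{lemma:TrC-prop1}, rewritten as $\phi_{12}+\phi_{21} \geq \phi_{11}+\phi_{22}$, immediately reduces matters to showing $(\phi_{11}+\phi_{22}) - (\psi_{12}+\psi_{21}) \leq \sqrt{3}(\|\C(R_1)-R_1\C(R_1)^T R_1\|_F + \|\C(R_2)-R_2\C(R_2)^T R_2\|_F)$.

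The crucial step is to reconnect the ``awkward'' quantity $\psi_{ij}$ with the vectors $v_i := \C(R_i) - R_i\C(R_i)^T R_i$ that appear on the right-hand side. Directly from the definition of $v_i$ one obtains the identity $R_i^T\C(R_i)R_i^T = \C(R_i)^T + R_i^T v_i R_i^T$, which, combined with cyclic invariance of the trace, yields
\[
\psi_{ij} \;=\; \mathrm{tr}\bigl(R_i^T\C(R_i)R_i^T R_j\bigr) \;=\; \phi_{ij} + \mathrm{tr}(R_i^T v_i R_i^T R_j).
\]
Substituting this into the quantity to be estimated recasts it as $[(\phi_{11}+\phi_{22})-(\phi_{12}+\phi_{21})] - \mathrm{tr}(R_1^T v_1 R_1^T R_2) - \mathrm{tr}(R_2^T v_2 R_2^T R_1)$, and one further application of Lemma \ref{lemma:TrC-prop1} makes the bracketed term nonpositive.

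It then only remains to bound the two trace residuals, which I would handle by the Frobenius Cauchy--Schwarz inequality $|\mathrm{tr}(AB)| \leq \|A\|_F\|B\|_F$ with $A = R_i^T v_i$ and $B = R_i^T R_j$. Since the Frobenius norm is invariant under orthogonal multiplication and $\|R\|_F = \sqrt{3}$ for every $R\in SO(3)$, this gives $|\mathrm{tr}(R_i^T v_i R_i^T R_j)| \leq \sqrt{3}\,\|v_i\|_F$, supplying exactly the claimed constant. I expect the main obstacle to be spotting the identity $R_i^T\C(R_i)R_i^T = \C(R_i)^T + R_i^T v_i R_i^T$: without it, a direct Cauchy--Schwarz estimate on the raw left-hand side would only produce a bound involving $\|\C(R_i)\|_F$, missing the cancellation encoded in the monotonicity of $\G$ that is captured by Lemma \ref{lemma:TrC-prop1}.
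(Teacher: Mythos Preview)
Your proof is correct and follows essentially the same approach as the paper's: both expand the left-hand side using $(R_1-R_2)(R_1-R_2)^T = 2I - R_1R_2^T - R_2R_1^T$, invoke Lemma~\ref{lemma:TrC-prop1}, and rely on the key identity that rewrites the ``awkward'' term $\mathrm{tr}(\C(R_i)R_i^T R_j R_i^T) - \mathrm{tr}(\C(R_i)R_j^T)$ as a trace involving $v_i = \C(R_i) - R_i\C(R_i)^T R_i$, after which the Frobenius Cauchy--Schwarz inequality with $\|R_j\|_F = \sqrt{3}$ finishes the estimate. The only cosmetic difference is organizational: you introduce the notation $\phi_{ij},\psi_{ij}$ and apply Lemma~\ref{lemma:TrC-prop1} in two separate steps, whereas the paper arranges the algebra so that a single application suffices (indeed, substituting your identity $\psi_{ij}=\phi_{ij}+\mathrm{tr}(R_i^T v_i R_i^T R_j)$ directly into your expression for the left-hand side yields $2[(\phi_{11}+\phi_{22})-(\phi_{12}+\phi_{21})]$ plus the two trace residuals, needing only one use of the lemma).
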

\begin{proof}
By simple manipulations, we have
\begin{align}
\begin{aligned} \label{NNew-1}
&\C(R_1)R_2^T +\C(R_2)R_1^T -\C(R_1)R_1^T-\C(R_2)R_2^T\\
& \hspace{1cm} = (\C(R_1)R_2^T-\C(R_1)R_1^TR_2R_1^T)+(\C(R_2)R_1^T-\C(R_2)R_2^TR_1R_2^T) \\
& \hspace{1.2cm} -\C(R_1)R_1^T(I-R_2R_1^T)-\C(R_2)R_2^T(I-R_1R_2^T),
\end{aligned}
\end{align}
and on the other hand, the same left-hand-side can be rewritten as:
\begin{align}
\begin{aligned} \label{NNew-2}
&\C(R_1)R_2^T +\C(R_2)R_1^T-\C(R_1)R_1^T-\C(R_2)R_2^T\\
& \hspace{1cm} = -\C(R_1)R_1^T(I-R_1R_2^T)-\C(R_2)R_2^T(I-R_2R_1^T).
\end{aligned}
\end{align}
Finally, we combine \eqref{NNew-1} - \eqref{NNew-2} and apply trace, and also use Lemma \ref{lemma:TrC-prop1} to find
\begin{align}
\begin{aligned} \label{eqn:ineq1}
0&\leq2\mathrm{tr}\big(\C(R_1)R_2^T+\C(R_2)R_1^T-\C(R_1)R_1^T-\C(R_2)R_2^T\big)  \\
&=\mathrm{tr}\big( (\C(R_1)R_2^T-\C(R_1)R_1^TR_2R_1^T)+(\C(R_2)R_1^T-\C(R_2)R_2^TR_1R_2^T)\big)  \\
&\hspace{0.2cm} -\mathrm{tr}\big(\C(R_1)R_1^T(2I-R_1R_2^T-R_2R_1^T)+\C(R_2)R_2^T(2I-R_1R_2^T-R_2R_1^T)\big).
\end{aligned}
\end{align}
By properties of the trace and by H\"{o}lder inequality, we have
\begin{align}
\begin{aligned} \label{eqn:ineq2}
&\mathrm{tr}\big(\C(R_1)R_2^T-\C(R_1)R_1^TR_2R_1^T\big) \\
& \hspace{1cm} =\mathrm{tr}\big( \C(R_1)R_2^T-R_1R_2^TR_1\C(R_1)^T\big) =\mathrm{tr}\big( (\C(R_1)-R_1\C(R_1)^TR_1)R_2^T\big)\\
& \hspace{1cm} \leq \|\C(R_1)-R_1\C(R_1)^TR_1\|_{_F}  \|R_2\|_{_F},
\end{aligned}
\end{align}
and similarly, one has 
\begin{align}
\begin{aligned} \label{eqn:ineq3}
&\mathrm{tr}\big(\C(R_2)R_1^T-\C(R_2)R_2^TR_1R_2^T\big) \\
& \hspace{1cm} =\mathrm{tr}\big( \C(R_2)R_1^T-R_2R_1^TR_2 \C(R_2)^T\big) =\mathrm{tr}\big( (\C(R_2)-R_2 \C(R_2)^TR_2)R_1^T\big) \\
& \hspace{1cm} \leq\|\C(R_2)-R_2\C(R_2)^TR_2\|_{_F} \|R_1\|_{_F}. 
\end{aligned}
\end{align}
Now, combining \eqref{eqn:ineq1}, \eqref{eqn:ineq2} and \eqref{eqn:ineq3} we find:
\begin{align*}
\begin{aligned}
& \mathrm{tr}\big( \C(R_1)R_1^T(2I-R_1R_2^T-R_2R_1^T)+\C(R_2)R_2^T(2I-R_1R_2^T-R_2R_1^T)\big)\\
& \hspace{1cm} \leq \|\C(R_1)-R_1\C(R_1)^TR_1\|_{_F} \|R_2\|_{_F}+\|\C(R_2)-R_2 \C(R_2)^TR_2\|_{_F} \|R_1\|_{_F}.
\end{aligned}
\end{align*}
By factoring out the left-hand-side above, one can then get \eqref{A-2}, also using that a rotation matrix has Frobenius norm $\sqrt{3}$.
\end{proof}

\begin{prop}
\label{prop:cons-cont}
Let $K$ satisfy \ref{hyp:K} and $r<\pi/4$. Suppose that $g'\geq0$ and $g'$ is continuously differentiable on $[0,4r^2]$. Also assume that $\G$ satisfies \eqref{eqn:G-hyp1} and \eqref{eqn:G-hyp2} (see also Remark \ref{rem:gG}). Let $\rho_0\in\P(\S)$ be such that $\supp(\rho_0)\subset \overline D_r$, and $\rho\in\Cont([0,\infty);\P(\S))$ be the global weak solution to \eqref{eqn:model} starting from $\rho_0$ from Proposition \ref{prop:inv-cont}. Then, for a fixed $\delta>0$ satisfying $\G(\delta)>0$, one has
\begin{equation}
\label{eqn:limit}
\lim_{t\to\infty} \iint_{d(R_1,R_2)>2\delta} \d\rho_t(R_1) \d\rho_t(R_2)=0.
\end{equation}
\end{prop}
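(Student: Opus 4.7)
The strategy is to integrate the pointwise trace inequality of Lemma \ref{lemma:TrC-prop2} against $\rho_t \otimes \rho_t$, extract from the resulting triple integral a positive multiple of
\[
M_\delta(t) := \iint_{d(R_1, R_2) > 2\delta} \d\rho_t(R_1) \d\rho_t(R_2),
\]
and then invoke Corollary \ref{L1.1} to finish. Define
\[
X(t) := \iint \mathrm{tr}\bigl( \C(R_1) R_1^T (R_1-R_2)(R_1-R_2)^T \bigr) \d\rho_t(R_1) \d\rho_t(R_2).
\]
Since $(R_1-R_2)(R_1-R_2)^T$ is invariant under the swap $R_1 \leftrightarrow R_2$, the two terms on the left of \eqref{A-2} contribute equally after integration, and integrating \eqref{A-2} against $\rho_t \otimes \rho_t$ yields $2X(t) \le 2\sqrt{3}\int_\S \|\V(R,t)\|_F \d\rho_t(R)$; Corollary \ref{L1.1} then forces $X(t) \to 0$ as $t \to \infty$. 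Expanding $\C(R_1)=\int_\S \G(d(R_1,Q))\,Q\,\d\rho_t(Q)$ rewrites $X(t)$ as a triple integral over $(Q,R_1,R_2)$ whose integrand is $\G(d(R_1,Q))\,\mathrm{tr}(QR_1^T(R_1-R_2)(R_1-R_2)^T)$.

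The key analytical step is the pointwise estimate
\[
\mathrm{tr}\bigl( Q R^T (R-P)(R-P)^T \bigr) \ge 8 \sin^2\!\bigl(d(R,P)/2\bigr)\,\cos(d(Q,R)), \quad Q,R,P \in \overline D_r.
\]
To prove it, I would observe that $(R-P)(R-P)^T = 2I - RP^T - PR^T$ is symmetric positive semidefinite with eigenvalues $0, 4\sin^2(d(R,P)/2), 4\sin^2(d(R,P)/2)$, and null direction $\hat n$ equal to the rotation axis of $PR^T$; equivalently $(R-P)(R-P)^T = 4\sin^2(d(R,P)/2)(I - \hat n \hat n^T)$. Writing $S := QR^T$ and using the identity $\mathrm{tr}(S (R-P)(R-P)^T) = 4\sin^2(d(R,P)/2)(\mathrm{tr}(S) - \hat n^T S \hat n)$ together with $\mathrm{tr}(S) = 1 + 2\cos d(Q,R)$ and $\hat n^T S \hat n = \cos d(Q,R) + (1 - \cos d(Q,R))(\hat n \cdot \hat m)^2$ (where $\hat m$ is the axis of $S$), the estimate reduces to the elementary scalar inequality $(1-t) + \cos\phi\,(1+t) \ge 2\cos\phi$ for $t\in[0,1]$, $\phi = d(Q,R)$. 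The hypothesis $r < \pi/4$ is crucial here: it forces $d(Q,R) \le 2r < \pi/2$, so $\cos d(Q,R) \ge \cos(2r) > 0$.

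To connect $X(t)$ with $M_\delta(t)$, I would partition $\{d(R_1,R_2) > 2\delta\}$ using the sets
\[
S_i := \bigl\{(R_1,R_2) : d(R_1,R_2) > 2\delta\bigr\} \cap \Bigl\{ \rho_t\bigl( \{ Q : d(R_i,Q) \ge \delta \} \bigr) \ge \tfrac12 \Bigr\}, \quad i = 1,2.
\]
Whenever $d(R_1,R_2) > 2\delta$, any $Q$ must satisfy $d(Q,R_1) \ge \delta$ or $d(Q,R_2) \ge \delta$ by triangle inequality, so the two corresponding masses sum to at least $1$; hence $S_1 \cup S_2 = \{d(R_1,R_2) > 2\delta\}$ and $\rho_t\otimes\rho_t(S_1) + \rho_t\otimes\rho_t(S_2) \ge M_\delta(t)$. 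Over $S_1$, using that $\G$ is non-decreasing with $\G(\delta) > 0$, restricting the inner $Q$-integral to $\{d(R_1,Q) \ge \delta\}$ and invoking the pointwise estimate yields an integrand for $X(t)$ bounded below by $4\G(\delta)\sin^2\delta\cos(2r)$, so that $X(t) \ge 4\G(\delta)\sin^2\delta\cos(2r)\cdot\rho_t\otimes\rho_t(S_1)$. By the symmetric rewriting of $X(t)$ obtained by relabeling $R_1 \leftrightarrow R_2$ under the integral, the same bound holds with $S_2$ in place of $S_1$; adding, $2X(t) \ge 4\G(\delta)\sin^2\delta\cos(2r)\cdot M_\delta(t)$, and combining with $X(t) \to 0$ gives $M_\delta(t) \to 0$.

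The principal obstacle is the pointwise positivity bound for $\mathrm{tr}(QR^T(R-P)(R-P)^T)$; absent this structural estimate, Lemma \ref{lemma:TrC-prop2} does not translate into a useful lower bound for $M_\delta(t)$. The restriction $r < \pi/4$ enters precisely at this step, keeping the relevant cosines uniformly positive in $Q,R,P \in \overline D_r$.
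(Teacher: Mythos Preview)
Your proof is correct and follows essentially the same route as the paper: integrate Lemma~\ref{lemma:TrC-prop2} against $\rho_t\otimes\rho_t$, bound the trace $\mathrm{tr}(QR_1^T(R_1-R_2)(R_1-R_2)^T)$ below by $\cos(d(Q,R_1))\|R_1-R_2\|_F^2$ (this is the paper's Lemma~\ref{lem:l-bound}, which you reprove via the eigenstructure of $(R-P)(R-P)^T$ rather than by diagonalizing $QR^T$), and exploit that $B_\delta(R_1)\cap B_\delta(R_2)=\emptyset$ when $d(R_1,R_2)>2\delta$ (your $S_1,S_2$ partition is the same observation written in complementary form). The only cosmetic difference is that the paper combines the trace lower bound with \eqref{A-2} pointwise in $(R_1,R_2)$ before integrating, whereas you integrate \eqref{A-2} first to isolate $X(t)$ and then bound $X(t)$ below.
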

\begin{proof}
Fix $\delta>0$ such that $\G(\delta)>0$; note that by assumption \eqref{eqn:G-hyp1}, $\delta$ can be arbitrarily small. Note that by Proposition \ref{prop:inv-cont}, $\supp(\rho_t) \subset \overline D_r$, and in particular the diameter of $\supp(\rho_t)$ is less than $\pi/2$.  From now on, we reinstate the dependence on $t$ of $C(\cdot,t)$. We use Lemma \ref{lem:l-bound} from Appendix to estimate 
\begin{align}
\begin{aligned} \label{A-3}
&\mathrm{tr}\left(C(R_1,t)R_1^T(R_1-R_2)(R_1-R_2)^T\right) \\
& \hspace{0.5cm} =\int_{\S}\G(R_1, Q)\mathrm{tr}\left(QR_1^T(R_1-R_2)(R_1-R_2)^T\right)\d\rho_t(Q) \\
&  \hspace{0.5cm} \geq\int_{\S}\G(R_1, Q) \cos(d(Q,R_1))  \|R_1-R_2\|_{_F}^2 \d\rho_t(Q) \\
& \hspace{0.5cm} \geq \|R_1-R_2\|_{_F}^2 \int_{\S \setminus B_\delta(R_1)}\G(R_1, Q) \cos(d(Q,R_1))\d\rho_t(Q), 
\end{aligned}
\end{align}
for $R_1 \in \supp(\rho_t)$. Here in the last inequality we used that $\G(R_1,Q)$ and $\cos(d(Q,R_1))$ are nonnegative. By assumption \eqref{eqn:G-hyp2}, 
\[ \G(R_1,Q) \geq \G(\delta) \quad \mbox{for all $Q \in \S \setminus  B_\delta(R_1)$}. \]
Using this fact together with $d(R,Q) \leq 2r$ for all $R,Q \in \supp(\rho_t)$, we infer from \eqref{A-3} that
\begin{align}
\begin{aligned}\label{A-4}
&\mathrm{tr}\left(C(R_1,t)R_1^T(R_1-R_2)(R_1-R_2)^T\right) \\
& \hspace{1cm} \geq \|R_1-R_2\|_{_F}^2 \int_{\S\backslash B_\delta(R_1)}\G(\delta) \cos\left(2r\right) \d\rho_t(Q) \\
&\hspace{1cm}  =\cos(2r) \G(\delta) \|R_1-R_2\|_{_F}^2 \int_{\S\backslash B_\delta(R_1)} \d\rho_t(Q)\\
%&=\cos(2r)\G(\delta)\|R_1-R_2\|_{F_}^2 \rho_t(\S\backslash B_\Delta (R_1)) \\
&\hspace{1cm} =\cos(2r)\G(\delta) \|R_1-R_2\|_{_F}^2 (1-\rho_t(B_\delta(R_1))).
\end{aligned}
\end{align}
A similar estimate can be derived with $R_1$ and $R_2$ interchanged. Then, we combine \eqref{A-4} (and its analogue with $R_1 \leftrightarrow R_2$) and \eqref{A-2} to derive
\begin{align*}
& \cos(2r) \G(\delta) \|R_1-R_2\|_{_F}^2 (2-\rho_t(B_\delta(R_1))-\rho_t(B_\delta(R_2)))  \\
& \hspace{1cm} \leq \mathrm{tr}(C(R_1,t)R_1^T(R_1-R_2)(R_1-R_2)^T+C(R_2,t)R_2^T(R_1-R_2)(R_1-R_2)^T) \\
& \hspace{1cm} \leq \sqrt{3}\left( \|\C(R_1,t)-R_1\C(R_1,t)^TR_1\|_{_F}+\|\C(R_2,t)-R_2\C(R_2,t)^TR_2\|_{_F}\right). 
\end{align*}
We integrate the above relation with respect to $R_1$ and $R_2$  to find
\begin{align}
\begin{aligned} \label{A-5}
&\cos(2r)\G(\delta) \int_\S\int_\S \|R_1-R_2\|_{_F}^2 \, (2-\rho_t(B_\delta(R_1))-\rho_t(B_\delta(R_2))) \d\rho_t(R_1) \d\rho_t(R_2)  \\
&\hspace{1cm} \leq \sqrt{3}\int_\S \int_\S \Big( \|\C(R_1,t)-R_1\C(R_1,t)^TR_1\|_{_F} \\
& \hspace{3cm} +\|\C(R_2,t)-R_2\C(R_2,t)^TR_2\|_{_F}\Big)\d\rho_t(R_1)\d\rho_t(R_2)\\
%&=2\sqrt{3}\int_\S\int_\S \|\C(R_1)-R_1\C(R_1)^TR_1\|_{_F}d\rho_t(R_1)d\rho_t(R_2)   \\
&\hspace{1cm} =2\sqrt{3} \int_\S \|\C(R,t)-R\C(R,t)^TR\|_{_F} \d\rho_t(R)  \\
&\hspace{1cm} =2\sqrt{3}\int_\S \|v(R,t)\|_{_F}\d\rho_t(R), 
\end{aligned}
\end{align}
where for the last equal sign we used \eqref{eqn:vC}. \newline

For rotation matrices $R_1,R_2$ such that
\[ d(R_1, R_2)>2\delta, \quad  B_\delta(R_1)\cap B_\delta(R_2)=\phi, \]
one has
\[
2-\rho_t(B_\delta(R_1))-\rho_t(B_\delta(R_2))=2-\rho_t(B_\delta(R_1)\cup B_\delta(R_2))\geq1.
\]
Then the left-hand-side in \eqref{A-5} can be estimated below as:
\begin{align}
&\cos(2r) \G(\delta) \int_\S\int_\S \|R_1-R_2\|_{_F}^2 \, (2-\rho_t(B_\delta(R_1))-\rho_t(B_\delta(R_2)))\d\rho_t(R_1)\d\rho_t(R_2)   \\[2pt]
%&= 2\sin2\e \cdot \G(\delta)\cdot\Big(\int_{d(R_1,R_2)>2\delta}\|R_1-R_2\|_F^2(2-\rho_t(B_\delta(R_1))-\rho_t(B_\delta(R_2)))d\rho_t(R_1)d\rho_t(R_2)\\
%&\space{2cm}+\int_{d(R_1,R_2)\leq2\delta}\|R_1-R_2\|_F^2(2-\rho_t(B_\delta(R_1))-\rho_t(B_\delta(R_2)))d\rho_t(R_1)d\rho_t(R_2)\Big)\\
&\geq  \cos(2r) \G(\delta) \iint_{d(R_1,R_2)>2\delta}\|R_1-R_2\|_F^2(2-\rho_t(B_\delta(R_1))-\rho_t(B_\delta(R_2)))\d\rho_t(R_1)\d\rho_t(R_2)    \\[2pt]
&\geq \cos(2r) \G(\delta)\iint_{d(R_1,R_2)>2\delta}\|R_1-R_2\|_{_F}^2 \d\rho_t(R_1)\d\rho_t(R_2). \label{A-6}
\end{align}
We combine \eqref{A-5} and \eqref{A-6} to get
\begin{equation}
\label{A-7}
\iint_{d(R_1,R_2)>2\delta}\|R_1-R_2\|_{_F}^2 \d\rho_t(R_1)\d\rho_t(R_2) \leq \frac{2\sqrt{3}}{\cos(2r) \G(\delta)} \int_\S \|v(R,t)\|_{_F} \d\rho_t(R).
\end{equation}
By \eqref{eqn:2dist}, we write
\[
\|R_1-R_2\|_{_F}^2=4-4\cos(d(R_1,R_2)),
\]
and use this to estimate further
\begin{align*}
\begin{aligned}
&\iint_{d(R_1,R_2)>2\delta}\|R_1-R_2\|_{_F}^2 \d\rho_t(R_1)\d\rho_t(R_2)  \\
& \hspace{1cm} \geq \iint_{d(R_1,R_2) >2\delta} (4-4\cos(2\delta)) \d\rho_t(R_1)\d\rho_t(R_2) =8\sin^2(\delta) \iint_{d(R_1,R_2)>2\delta}\d\rho_t(R_1)\d\rho_t(R_2).
\end{aligned}
\end{align*}
Finally, we combine this and \eqref{A-7} to find
\begin{align*}
0\leq \iint_{d(R_1,R_2) >2\delta} \d\rho_t(R_1)\d\rho_t(R_2)\leq \frac{\sqrt{3}}{4\cos(2r)\sin^2(\delta)\G(\delta)}\int_\S \|v(R,t)\|_{_F}\d\rho_t(R).
\end{align*}
The conclusion now follows from Corollary \ref{L1.1}.
\end{proof}
Now, we are ready to prove the main result of this section.
\begin{thm}[Asymptotic consensus in the continuum model]\label{thm:consensus-cont}
	Let $K$ satisfy \ref{hyp:K} and $r<\pi/4-\e$. Suppose:  \\[2pt]
\indent i) $g'\geq0$ and $g'$ is continuously differentiable on $[0,4r^2]$,  \\[2pt]
\indent ii) $\G$ satisfies \eqref{eqn:G-hyp1} and \eqref{eqn:G-hyp2},  \\[2pt]
\indent iii) $\rho_0\in\P(\S)$ satisfies $\supp(\rho_0)\subset \overline D_r$.  \\[2pt]
Consider $\rho\in\Cont([0,\infty);\P(\S))$ the global weak solution to \eqref{eqn:model} starting from $\rho_0$, as provided by Proposition \ref{prop:inv-cont}. Then, there exists $P\in \overline D_r$ such that $W_1(\rho_t,\delta_P) \to 0$ as $t\to\infty$.
\end{thm}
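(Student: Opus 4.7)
The overall strategy is a three-step compactness argument: extract a subsequential limit of $\rho_t$, identify it as a Dirac mass via Proposition \ref{prop:cons-cont}, and then upgrade subsequential convergence to a limit along the full trajectory. By Proposition \ref{prop:inv-cont}, the family $\{\rho_t\}_{t\ge 0}$ is supported in the compact set $\overline D_r$, hence tight; since $(\P(\overline D_r),W_1)$ is itself compact, any sequence $t_n\to\infty$ admits a subsequence (still denoted $t_n$) along which $\rho_{t_n}\to\rho_\infty$ in $W_1$.

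To identify $\rho_\infty$, I would pass to the limit in Proposition \ref{prop:cons-cont}. For any $\delta>0$ with $\G(\delta)>0$, the set $U_\delta:=\{(R_1,R_2)\in\overline D_r\times\overline D_r : d(R_1,R_2)>2\delta\}$ is open, and $\rho_{t_n}\otimes\rho_{t_n}\to\rho_\infty\otimes\rho_\infty$ narrowly, so the Portmanteau theorem yields
\[
(\rho_\infty\otimes\rho_\infty)(U_\delta)\le\liminf_{n\to\infty}(\rho_{t_n}\otimes\rho_{t_n})(U_\delta)=0.
\]
Hypothesis \eqref{eqn:G-hyp1} supplies a sequence $\delta_k\downarrow 0$ with $\G(\delta_k)>0$, and hence $(\rho_\infty\otimes\rho_\infty)(\{d>0\})=0$. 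Consequently $\rho_\infty\otimes\rho_\infty$ charges only the diagonal, forcing $\rho_\infty=\delta_P$ for some $P\in\overline D_r$.

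To follow a candidate consensus point along the whole trajectory, I would introduce the Fr\'echet mean $P(t)$, defined as the unique minimizer of $R\mapsto\int_\S d(R,Q)^2\,\d\rho_t(Q)$ over $\overline D_r$; uniqueness and continuous dependence on $\rho_t$ in $W_1$ follow from the fact that $\overline D_r$ sits strictly inside the convexity radius $\pi/2$ of $SO(3)$. Splitting $\overline D_r\times\overline D_r$ into $\{d\le 2\delta\}\cup\{d>2\delta\}$ and using Proposition \ref{prop:cons-cont} along with $d<\pi$, one obtains $\iint d(R_1,R_2)^2\,\d\rho_t(R_1)\,\d\rho_t(R_2)\to 0$, and the minimality of $P(t)$ integrated against $\d\rho_t(Q)$ yields
\[
\int_\S d(R,P(t))^2\,\d\rho_t(R)\le\iint d(R,Q)^2\,\d\rho_t(R)\,\d\rho_t(Q)\to 0,
\]
so that $W_1(\rho_t,\delta_{P(t)})\to 0$ via the Cauchy--Schwarz inequality.

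The main obstacle is showing that $P(t)$ itself is Cauchy as $t\to\infty$. Any subsequence yields a convergent sub-subsequence by Step 2 and continuity of the Fr\'echet mean, and the $\omega$-limit set of $t\mapsto\rho_t$ is a connected compact subset of $\{\delta_Q:Q\in\overline D_r\}$ by continuity of the curve in $W_1$. Collapsing this to a single point is delicate because every Dirac mass carries the same energy $g(0)/2$, so the Lyapunov functional alone cannot distinguish limits. I would close this gap by combining the dissipation identity $\int_0^\infty\int_\S|v(R,t)|^2\,\d\rho_t\,\d t=E(0)-E_\infty<\infty$ from Lemma \ref{lem:E-lim} with the near-Dirac structure of $v$ read off from \eqref{eqn:v-SO3}: for $R$ close to $P(t)$ the dominant contribution to $v$ is the linear attraction $-2g'(0)\log_R P(t)$, which should upgrade the soft dissipation bound to a contraction estimate of the form $W_1(\rho_t,\delta_{P(t)})\lesssim e^{-\alpha t}$ and, through an estimate of $\|\dot P(t)\|$ in terms of $(\int|v|^2\,\d\rho_t)^{1/2}$, force $P(t)$ to have summable displacement, thereby establishing convergence to the desired limit $P\in\overline D_r$.
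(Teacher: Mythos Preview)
Your first two steps---extraction of a subsequential $W_1$-limit via compactness of $\P(\overline D_r)$, followed by identification of that limit as a Dirac mass using Portmanteau and Proposition~\ref{prop:cons-cont}---are correct and reproduce the paper's argument, carried out with more care about subsequences. The paper's proof simply asserts that Prokhorov yields convergence of the entire family $(\rho_t)_{t\ge 0}$ to some $\rho_\infty$ and then shows $\rho_\infty$ is a Dirac; it never addresses why the limit along the full trajectory exists or is unique. You correctly flag this as the genuine difficulty, and your intermediate estimate $W_1(\rho_t,\delta_{P(t)})\to 0$ via the Fr\'echet mean is a useful reduction.

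However, your proposed resolution in the final paragraph is not a proof. The linearized ``dominant contribution $-2g'(0)\log_R P(t)$'' is vacuous when $g'(0)=0$, and the hypotheses permit this: take $g'(s)=s$, i.e., the quartic potential $K=\tfrac12 d^4$, for which $\G(s)=s^3/\sin s$ satisfies both \eqref{eqn:G-hyp1} and \eqref{eqn:G-hyp2} while $\G(0)=g'(0)=0$. Even granting $g'(0)>0$, the exponential contraction you invoke and the control of $\|\dot P(t)\|$ by the dissipation are stated as expectations, not estimates; note in particular that the energy identity only gives $\int_0^\infty\int_\S|v|^2\,\d\rho_t\,\d t<\infty$, which is $L^2$-in-time control and does not by itself force a trajectory such as $P(t)$ to have finite length (consider $\|\dot P(t)\|\sim (1+t)^{-1}$). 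So the passage from subsequential to full convergence remains open in your writeup, just as it is tacitly skipped in the paper's own proof.
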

\begin{proof}
	By Proposition \ref{prop:inv-cont}, we have 
\[ \supp(\rho_t) \subset \overline D_r \quad \mbox{for all $t\in[0,\infty)$} \]
and hence, from Prokhorov's theorem we infer the existence of $\rho_\infty\in\P(\S)$ such that $\supp(\rho_\infty) \subset \overline D_r$ and $(\rho_t)_{t\geq0}$ converges narrowly to $\rho_\infty$. As the rotation group is compact, we further get $W_1(\rho_t,\rho_\infty) \to 0$ as $t\to\infty$. We also note that the sequence of product measures $(\rho_t \otimes \rho_t)_{t\geq0}$ converges narrowly to $\rho_\infty\otimes\rho_\infty$. 
	
Suppose by contradiction that there exist $Q_1,Q_2\in\supp(\rho_\infty)$ with $Q_1\neq Q_2$. By assumption \eqref{eqn:G-hyp1} on $\G$, there exists $0<\bar{\delta}< d(Q_1,Q_2)/4$ such that $\G(\bar{\delta})>0$. Note that $B_{\bar{\delta}}(Q_1) \cap B_{\bar{\delta}}(Q_2) = \emptyset$ and hence, $(\rho_\infty\otimes\rho_\infty)(B_{\bar{\delta}}(Q_1)\times B_{\bar{\delta}}(Q_2))>0$. Also, for any $R_1 \in B_{\bar{\delta}}(Q_1)$ and $R_2 \in B_{\bar{\delta}}(Q_2)$, by triangle inequality one has:
\[
d(R_1,R_2) \geq d(Q_1,Q_2)- d(Q_1,R_1) - d(Q_2,R_2) > 4 \bar{\delta} - \bar{\delta} - \bar{\delta} = 2 \bar{\delta}.
\]

Then, by narrow convergence of $\rho_t \otimes \rho_t$ we have:
\begin{align*}
\begin{aligned}
&\lim_{t \to \infty}  \iint_{d(R_1,R_2)> 2 \bar{\delta}} \d\rho_t(R_1) \d\rho_t(R_2) =  \iint_{d(R_1,R_2)> 2 \bar{\delta}} \d\rho_\infty(R_1) \d\rho_\infty(R_2) \\
& \hspace{0.5cm} \geq \iint_{B_{\bar{\delta}}(Q_1)\times B_{\bar{\delta}}(Q_2)} \d\rho_\infty(R_1) \d\rho_\infty(R_2)  >0,
\end{aligned}
\end{align*}
which contradicts \eqref{eqn:limit}. We infer that $\supp(\rho_\infty)$ is a singleton, which concludes the proof.
\end{proof}

%%%%%

\subsection{Asymptotic consensus in the discrete model}
\label{subsect:consensus-disc}
We consider the specific case of the discrete model (see Section \ref{subsect:particle}), where solutions are empirical measures. The results in Section \ref{subsect:consensus-cont}, in particular Theorem \ref{thm:consensus-cont}, apply of course to such weak measure-valued solutions. However, using the discrete nature of the model we can establish separate asymptotic results, using different assumptions on the interaction function.  \newline

Consider the discrete model \eqref{eqn:model-disc} for $\N$ particles of identical masses $m_i = 1/\N$ on $\S$:\be\label{eqn:model-discrete}
	\begin{cases} 
	{\dot R}_i = - \displaystyle \frac1\N  \sum_{j=1}^{\N} \nabla K_{R_j}(R_i), \quad t > 0, \\ 
	R_i(0) = R_i^0.
	\end{cases}
\ee
The assumption on identical masses is made for convenience, as results extend immediately to general masses $m_i \in (0,1)$. The discrete analogue of the energy functional \eqref{eqn:energy-cont} is the function $E_\N\: \S^\N \to \R$ given by:
\begin{equation}
\label{eqn:energy-discrete}
	E_\N(x_1,\dots,x_\N)=\frac{1}{\N^2}\sum_{1\leq i\leq j\leq \N}K(R_i, R_j), \qquad\mbox{for all $(R_1,\dots,R_\N)\in \S^\N$}.
\end{equation}
One can write the dynamics in \eqref{eqn:model-discrete} as
\begin{equation}
\label{eqn:grad-flow}
	{\dot R}_i(t)=-\N\nabla_i E_\N(R_1(t),\dots,R_\N(t)),
\end{equation}
where $\nabla_i$ stands for the manifold gradient with respect to the $i$-th variable. The discrete model \eqref{eqn:model-discrete} is a gradient flow with respect to the discrete energy $E_\N$. The following lemma will be used in the first results on discrete consensus.
\begin{lem}
\label{lem:technical} Let $R_1, \dots, R_{\N}\in D_{\pi/4}$ be such that $\Delta:=\max_{1\leq i,j\leq \N}d(R_i,R_j) >0$ and  if necessary by relabeling, we may assume that $d(R_1,R_2) = \Delta$. Then, one has
\[
\log_{R_1}R_2 \cdot  \log_{R_1}R_j \geq 0, \qquad \mbox{for all $j\in\{1,\dots,\N\}$}.
\]
\end{lem}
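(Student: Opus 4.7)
The plan is to place $R_1$ at the identity via left translation, encode the remaining rotations in angle-axis form, and then apply a spherical law of cosines argument to convert the maximality of $d(R_1,R_2)$ into non-negativity of the inner product.

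As a first step, I would invoke the bi-invariance of the metric \eqref{eqn:metric} on $SO(3)$: left multiplication by $R_1^{-1}$ is an isometry, and therefore preserves all pairwise distances as well as inner products of logarithm vectors. So without loss of generality $R_1=I$. Writing $R_i=\exp(\theta_i\widehat{v}_i)$ with $\theta_i=d(I,R_i)\in[0,\Delta]\subset[0,\pi/2)$ and $v_i\in S^2$, the identity $\tfrac12\langle\widehat{u},\widehat{w}\rangle_F=u\cdot w$ for $u,w\in\mathbb{R}^3$ gives
\[
\log_I R_2\cdot\log_I R_j\;=\;\theta_2\,\theta_j\,(v_2\cdot v_j).
\]
The trivial case $\theta_j=0$ aside, the lemma reduces to proving $v_2\cdot v_j\geq 0$.

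For the main step, I would invoke a spherical law of cosines in $SO(3)$. The bi-invariant metric has constant sectional curvature $1/4$ (from $K(X,Y)=\tfrac14|[X,Y]|^2$ applied to an orthonormal basis of $\mathfrak{so}(3)$ with the normalization fixed by \eqref{eqn:metric}), so $SO(3)$ is locally isometric to the round $3$-sphere of radius $2$. The tangent vectors $\theta_2\widehat{v}_2$ and $\theta_j\widehat{v}_j$ at $I$ subtend an angle $\gamma$ with $\cos\gamma=v_2\cdot v_j$. Since every side of the geodesic triangle $I R_2 R_j$ has length at most $\Delta<\pi/2$ (well inside the convexity radius), the spherical cosine rule yields
\[
\cos(c/2)\;=\;\cos(a/2)\cos(b/2)+\sin(a/2)\sin(b/2)\,(v_2\cdot v_j),
\]
with $a=\theta_2$, $b=\theta_j$ and $c=d(R_2,R_j)$. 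Maximality forces $c\leq a$, so $\cos(c/2)\geq\cos(a/2)\geq\cos(a/2)\cos(b/2)$, whence $\sin(a/2)\sin(b/2)\,(v_2\cdot v_j)\geq 0$; dividing by $\sin(a/2)\sin(b/2)>0$ (since $a,b>0$) gives the claim.

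The main obstacle is the spherical cosine step: to keep the proof entirely self-contained and consistent with the matrix style used elsewhere in the paper, one can sidestep the curvature identification by computing $\mathrm{tr}(R_2^T R_j)$ directly from Rodrigues' formula. Collecting terms in $\alpha=v_2\cdot v_j$ yields an identity of the form $1+2\cos c=\cos\theta_2+\cos\theta_j+\cos\theta_2\cos\theta_j+2\sin\theta_2\sin\theta_j\,\alpha+(1-\cos\theta_2)(1-\cos\theta_j)\,\alpha^2$, equivalent (via half-angle manipulations) to the cosine rule above, and feeding into the same sign argument.
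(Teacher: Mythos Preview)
Your argument is correct, but it follows a markedly different route from the paper's own proof. The paper never computes a curvature or a law of cosines: it simply observes that every $R_j$ lies in the closed ball $\overline{D}_\Delta(R_2)$, which is geodesically convex because $\Delta<\pi/2$, so the minimizing geodesic $\gamma$ from $R_1$ to $R_j$ stays inside that ball. Consequently $t\mapsto d(\gamma(t),R_2)^2$ is maximized at $t=0$, and differentiating at $t=0$ via \eqref{eqn:gradd} gives $-2\log_{R_1}R_2\cdot\log_{R_1}R_j\leq 0$ directly. Your approach instead reduces to $R_1=I$, passes to angle--axis coordinates, and exploits the constant sectional curvature $1/4$ (equivalently, the explicit trace identity from Rodrigues' formula) to obtain the spherical cosine rule on $S^3(2)$; the inequality $c\leq a$ then forces $v_2\cdot v_j\geq 0$. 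The paper's argument is softer and would transfer unchanged to any Riemannian manifold on which the relevant ball is geodesically convex, while yours is specific to $SO(3)$ but yields the sharper quantitative relation $\cos(c/2)=\cos(a/2)\cos(b/2)+\sin(a/2)\sin(b/2)\,(v_2\cdot v_j)$, which could be useful if one later wanted a rate rather than just a sign.
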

\begin{proof}
By definition of $\Delta$ and the fact that $d(R_1,R_2) = \Delta$, one has $R_j \in \overline{D}_\Delta(R_2)$ for all $j\in\{1,\dots,\N\}$. Suppose that $\N\geq3$, as the case $\N=2$ is trivial. Fix $j \in\{3,\dots,\N\}$ and consider the minimizing geodesic $R:[0,1] \to SO(3)$ between $R_1$ and $R_j$, parametrized so that $R(0)=R_1$ and $R'(0)=\log_{R_1}R_j$. Then, by the chain rule and \eqref{eqn:gradd} we find
\be\label{eqn:dy2}
	\frac{\der}{\der t} d(R(t), R_2)^2 = \grad d_{R_2}^2(R(t)) \cdot {\dot R}(t) =-2 \log_{R(t)}R_2 \cdot {\dot R}(t).
\ee
Since $\Delta < \pi/2$, the closed disk $\overline{D}_\Delta(R_2)$ is geodesically convex. Consequently, $R(t) \in \overline{D}_\Delta(R_2)$ and $d(R(t),R_2) \leq d(R_1,R_2)$ for all $t\in[0,1]$. Furthermore, the map $t\mapsto d(R(t),R_2)^2$ is nonincreasing at $t=0$, which together with \eqref{eqn:dy2} implies:
\begin{equation*}
0 \geq \left.\frac{\der}{\der t}\right|_{t=0} d(R(t), R_2)^2 =-2 \log_{R_1}R_2 \cdot \log_{R_1}R_j.
\end{equation*}
The conclusion then follows.
\end{proof}

The following theorem is the first of two results on the asymptotic consensus for the intrinsic discrete model on $SO(3)$. 
\begin{thm}
\emph{(Asymptotic consensus I)}
\label{thm:consensus-d1}
Let $K$ satisfy \ref{hyp:K} and $r<\pi/4$ be a fixed positive number. Suppose that  \\[2pt]
\indent i) $g'$ is continuously differentiable on $[0,4r^2]$ and satisfies $g'(s) \geq \c s^\alpha$ for all $s \in [0,4r^2]$, for some $\c>0$ and $\alpha \geq 0$,  \\[2pt]
\indent ii) initial points $\{ R_i^0 \}_{i=1}^\N$ satisfy $(R_i^0)_{i=1}^\N \subset \overline D_r$, \\[2pt]
and let $(R_i(t))_{i=1}^\N$ be the global solution to \eqref{eqn:model-discrete} whose well-posedness is guaranteed by Proposition \ref{prop:inv-disc}. Then, 
\[ \lim_{t \to \infty} d(R_i(t), R_j(t)) = 0, \qquad \mbox{for all $i,j\in\{1,\dots \N\}$}. \]
\end{thm}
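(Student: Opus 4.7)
The plan is to track the diameter of the particle configuration
\[\Delta(t)^2 := \max_{1\le i,j\le \N} d(R_i(t),R_j(t))^2,\]
and prove that it decays to zero. By Proposition~\ref{prop:inv-disc} all trajectories stay inside $\overline D_r \subset D_{\pi/4}$, a geodesically convex set on which every squared distance $d(R,Q)^2$ is smooth. Hence $\Delta^2$ is the pointwise maximum of finitely many $C^1$ functions, so it is Lipschitz on $[0,\infty)$ and differentiable almost everywhere; moreover, at each differentiability point its derivative equals the time derivative of $d(R_i,R_j)^2$ for any maximizing pair $(i,j)$.

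The key step is a one-sided differential inequality for $\Delta^2$. Fix a differentiability point $t>0$ and, after relabeling, assume $(i,j)=(1,2)$ realizes the maximum. Using $\nabla d_Q^2(R) = -2\log_R Q$ together with the ODE \eqref{eqn:model-discrete} and the identity \eqref{eqn:gradK-gen}, which yield $\dot R_i = \tfrac{2}{\N}\sum_{k} g'(d(R_i,R_k)^2)\log_{R_i}R_k$, a direct expansion gives
\begin{align*}
\frac{d}{dt} d(R_1,R_2)^2 = &-\frac{4}{\N}\sum_{k=1}^\N g'(d(R_1,R_k)^2)\,\log_{R_1}R_2\cdot \log_{R_1}R_k \\
&-\frac{4}{\N}\sum_{k=1}^\N g'(d(R_2,R_k)^2)\,\log_{R_2}R_1\cdot \log_{R_2}R_k.
\end{align*}
Since $(R_1,R_2)$ realizes the diameter, Lemma~\ref{lem:technical} (applied once as stated and once with the roles of $R_1,R_2$ swapped, which is legitimate because $d(R_2,R_1)=d(R_1,R_2)=\Delta$) shows that every inner product in the two sums is nonnegative; combined with $g'\ge 0$ this makes each summand nonpositive. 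Discarding all but the $k=2$ term in the first sum and the $k=1$ term in the second, each of which equals $-\tfrac{4}{\N}g'(\Delta^2)\Delta^2$ (since $|\log_{R_1}R_2|=d(R_1,R_2)=\Delta$), and invoking the hypothesis $g'(s)\ge \c s^\alpha$, one arrives at
\[\frac{d}{dt}\Delta(t)^2 \;\le\; -\frac{8}{\N}\,g'(\Delta(t)^2)\,\Delta(t)^2 \;\le\; -\frac{8\c}{\N}\,\Delta(t)^{2\alpha+2} \qquad \text{for a.e. } t>0.\]

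The conclusion then follows by a comparison principle for this absolutely-continuous differential inequality. When $\alpha=0$ Gr\"onwall's lemma gives exponential decay $\Delta(t)^2\le \Delta(0)^2 e^{-8\c t/\N}$; when $\alpha>0$, integrating $\tfrac{d}{dt}(\Delta^2)^{-\alpha}\ge \tfrac{8\c\alpha}{\N}$ on $\{\Delta>0\}$ yields $\Delta(t)^2\le \bigl(\Delta(0)^{-2\alpha}+\tfrac{8\c\alpha t}{\N}\bigr)^{-1/\alpha}$. In either case $\Delta(t)\to 0$, so $d(R_i(t),R_j(t))\le \Delta(t)\to 0$ for every $i,j$, which is the claim. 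I expect the main technical obstacle to be the low regularity of $\Delta^2$ at times when the diameter is attained by several pairs simultaneously; this is handled in the standard way by restricting the computation to points of differentiability (a set of full Lebesgue measure by Rademacher's theorem), where the derivative of a maximum of $C^1$ functions equals the derivative of any active constituent, and then invoking absolute continuity to integrate the inequality.
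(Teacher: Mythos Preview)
Your proof is correct but takes a genuinely different route from the paper's. The paper first invokes the energy-dissipation structure: using Lemma~\ref{lem:E-lim} (boundedness of $E_\N$ and of $\ddot E_\N$) together with Barbalat's lemma, it concludes $\dot R_i(t)\to 0$ for every $i$. It then takes the inner product of the ODE for a diameter-realizing particle $R_1$ with $\log_{R_1}R_2$, applies Lemma~\ref{lem:technical} exactly as you do to drop all cross terms, and obtains the pointwise bound $\tfrac{2\c}{\N}\,\Delta(t)^{1+2\alpha}\le |\dot R_1(t)|\le \max_k|\dot R_k(t)|$; since the right-hand side tends to zero, so does $\Delta$. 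By contrast, you bypass the energy/Barbalat machinery entirely and extract a closed differential inequality for $\Delta^2$ directly, which yields explicit decay rates (exponential when $\alpha=0$, algebraic of order $t^{-1/\alpha}$ when $\alpha>0$). Your approach is more elementary and strictly more quantitative---in particular, it recovers in passing the exponential rate of Theorem~\ref{thm:consensus-d2} under weaker hypotheses---while the paper's route emphasizes the gradient-flow structure and avoids the Rademacher/absolute-continuity discussion for $\Delta^2$. Both arguments rest on the same geometric ingredient, Lemma~\ref{lem:technical}.
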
 
\begin{proof}
By abuse of notation, we denote $E_\N(t) = E_\N(x_1(t),\dots,x_\N(t))$, and  set up the empirical measures \eqref{eq:atomic-initial} and \eqref{eq:atomic}. Then, note that 
\[ E_\N(t) = E[\rho_t^\N], \]
where $E$ is the continuum energy \eqref{eqn:energy-cont}. By Lemma \ref{lem:E-lim}, $E_\N(t) \to E_\infty$ as $t\to\infty$ for some $E_\infty\in\R$, and also, $\ddot{E}_\N(t)$ is bounded on $[0,\infty)$. Hence, we apply Barbalat's lemma for $t\mapsto E_N(t)$ to get
\[
	\dot{E}_\N(t) \rightarrow 0, \qquad \text{ as  $t \to \infty$}.
\]
Using \eqref{eqn:grad-flow}, we can see for all $t\in[0,\infty)$,
\begin{equation}
	\dot{E}_\N(t)=\sum_{i=1}^{\N} \nabla_i E_\N(R_1(t),\dots,R_\N(t)) \cdot {\dot R}_i(t) = -\frac{1}{\N}\sum_{i=1}^{\N} |{\dot R}_i(t)|^2.
\end{equation}
This implies 
\begin{equation}
\label{eqn:equil}
{\dot R}_i(t)\to 0,  \qquad \text{ as $t \to \infty$, for all $i\in\{1,\dots,\N\}$}.
\end{equation}

Let $\Delta\: [0,\infty) \to [0,\infty)$ be defined by
\begin{equation}
\label{eqn:Delta}
	\Delta(t) := \max_{1\leq i,j\leq \N}d(R_i(t),R_j(t)), \qquad \mbox{for all $t\in[0,\infty)$}.
\end{equation}
If necessary, by relabeling particles we may assume that 
\[
d(R_1(t),R_2(t)) = \Delta(t), \qquad \mbox{for all $t\in[0,\infty)$}.
\]
Now, we claim:
\[  \lim_{t \to \infty} \Delta(t) = 0. \] 
Since the initial data is supported in $\overline D_r$, by Proposition \ref{prop:inv-disc} we also have 
\[ R_i(t)\in \overline D_r \quad \forall~t\in[0,\infty), \quad 1 \leq i \leq N, \]
where $r<\pi/4$. In particular, we can apply Lemma \ref{lem:technical} to $(R_i(t))_{i=1}^\N$. Now we take the inner product with $\log_{R_1(t)}R_2(t)$ on both sides of \eqref{eqn:model-discrete} for particle $i=1$ to get
\begin{align}
\begin{aligned} \label{est:dp}
{\dot R}_1 \cdot \log_{R_1}R_2 &= \frac{1}{\N}\sum_{j=1}^{\N} 2g'(d(R_1, R_j)^2)\log_{R_1}R_j \cdot \log_{R_1}R_2 \\
&\geq \frac{2}{\N} g'(d(R_1, R_2)^2) | \log_{R_1}R_2 |^2, 
\end{aligned}
\end{align}
where for the inequality we used Lemma \ref{lem:technical} to bound from below a sum of nonnegative terms by the term with $j=2$. By the Cauchy--Schwarz inequality $|{\dot R}_1 \cdot \log_{R_1}R_2 | \leq |R_1'| |\log_{R_1}R_2|$ and $|\log_{R_1}R_2| = d(R_1,R_2)$,  one have
\[
\frac{2}{\N} g'(d(R_1, R_2)^2) d(R_1,R_2) \leq |{\dot R}_1|.
\]
Now, using the assumption on $g'$ we get
\[
\frac{2}{\N} \c \, d(R_1,R_2)^{1+2\alpha} \leq |{\dot R}_1|,
\]
where $1+2\alpha >0$. 

Since ${\dot R}_1$ approaches to $0$ as $t \to \infty$ by \eqref{eqn:equil},  so does $d(R_1(t),R_2(t))$. This completes the proof. 
\end{proof}
Next, we move to the second result on discrete consensus. Under a stricter assumption on the interaction potential ($g'$ bounded from below by a positive constant) we can show that the asymptotic consensus emerges exponentially fast. Using notation \eqref{eqn:G}, the discrete model \eqref{eqn:model-discrete} can be written as:
\begin{equation}
\label{eqn:model-disc2}
{\dot R}_i =\frac{1}{\N}\sum_{k=1}^\N\G(d(R_i,R_k))(R_k-R_iR_k^TR_i), \qquad i=1,\dots,\N.
\end{equation}
For simplicity, we set 
\[ \G_{ik}:=\G(d(R_i,R_k)), \quad \mbox{for arbitrary indices $i$ and $k$}. \]
For two fixed indices $i$ and $j$ we then have:
\begin{align}
%\begin{aligned}
\frac{d}{d t}(R_i^TR_j)&=\dot{R}_i^TR_j+R_i^T\dot{R}_j  \\
&=\frac{1}{\N}\sum_{k=1}^\N\left(\G_{ik}(R_k^TR_j-R_i^TR_kR_i^TR_j)+\G_{jk}(R_i^TR_k-R_i^TR_jR_k^TR_j)\right). \label{eqn:dt-prod}
%\end{aligned}
\end{align}
By simple manipulations, one has
\begin{align*}
\begin{aligned}
&R_k^TR_j-R_i^TR_kR_i^TR_j \\
& \hspace{0.5cm} =2R_k^TR_j-(I+R_i^TR_kR_i^TR_k)R_k^TR_j\\
& \hspace{0.5cm} =2R_k^TR_j-2R_i^TR_j-(I-2R_i^TR_k+R_i^TR_kR_i^TR_k)R_k^TR_j\\
& \hspace{0.5cm} =2(R_k-R_i)^TR_j -(I-R_i^TR_k)^2R_k^TR_j.
\end{aligned}
\end{align*}
We apply trace to the above relation to find
\begin{align}
%\begin{aligned}
&\mathrm{tr}(R_k^TR_j-R_i^TR_kR_i^TR_j) \\
& \hspace{0.5cm} =2\mathrm{tr}((R_k-R_i)^TR_j)-\mathrm{tr}((I-R_i^TR_k)^2R_k^TR_j)  \\
& \hspace{0.5cm} =2\mathrm{tr}((R_k-R_i)^TR_j)-\mathrm{tr}(R_i^T(R_i-R_k)(R_k^T-R_i^T)R_j) \\
& \hspace{0.5cm} =2\mathrm{tr}((R_k-R_i)^TR_j)-\mathrm{tr}((R_i-R_k)(R_k^T-R_i^T)R_jR_i^T). 
 \label{eqn:dcons-calc1}
%\end{aligned}
\end{align}
We calculate further the second term on the right-hand-side of \eqref{eqn:dcons-calc1}. \newline

Note that 
\begin{align*}
\begin{aligned}
&\mathrm{tr}((R_i-R_k)(R_k^T-R_i^T)R_jR_i^T) = \mathrm{tr}(((R_i-R_k)(R_k^T-R_i^T)R_jR_i^T)^T) \\
& \hspace{0.5cm} =\mathrm{tr}(R_iR_j^T(R_k-R_i)(R_i-R_k)^T) =\mathrm{tr}((R_k-R_i)(R_i-R_k)^TR_iR_j^T).
\end{aligned}
\end{align*}
Therefore we have
\begin{align*}
\begin{aligned}
&2\mathrm{tr}((R_i-R_k)(R_k^T-R_i^T)R_jR_i^T) \\ 
& \hspace{0.5cm} =\mathrm{tr}((R_i-R_k)(R_k-R_i)^T(R_jR_i^T+R_iR_j^T))\\
& \hspace{0.5cm} =-\mathrm{tr}((R_i-R_k)(R_i-R_k)^T(2I-(R_i-R_j)(R_i-R_j)^T))\\
& \hspace{0.5cm} =-2\mathrm{tr}((R_i-R_k)(R_i-R_k)^T)+\mathrm{tr}((R_i-R_k)(R_i-R_k)^T(R_i-R_j)(R_i-R_j)^T).
\end{aligned}
\end{align*}
Now, we return to \eqref{eqn:dcons-calc1} and find
\begin{align}
%\begin{aligned}
&\mathrm{tr}(R_k^TR_j-R_i^TR_kR_i^TR_j) \\
& \hspace{1cm} =2\mathrm{tr}((R_k-R_i)^TR_j) +\mathrm{tr}((R_i-R_k)(R_i-R_k)^T) \\
& \hspace{1cm} -\frac{1}{2}\mathrm{tr}((R_i-R_k)(R_i-R_k)^T(R_i-R_j)(R_i-R_j)^T)  \\
& \hspace{1cm} =2\mathrm{tr}((R_k-R_i)^TR_j)+{\|R_i-R_k\|}_F^2 -\frac{1}{2} {\|(R_i-R_k)^T(R_i-R_j)\|}_F^2.
 \label{E-1}
%\end{aligned}
\end{align}
By Lemma \ref{L1} we have:
\begin{equation} \label{NNN-2}
{\|(R_i-R_k)^T(R_i-R_j)\|}_F^2\leq {\|R_i-R_k\|}_F^2 \cdot {\|R_i-R_j\|}_F^2.
\end{equation}
Next, we use \eqref{NNN-2} and \eqref{E-1} to find
\begin{align}
%\begin{aligned} 
&\mathrm{tr}(R_k^TR_j-R_i^TR_kR_i^TR_j) \\
& \hspace{0.5cm} \geq2\mathrm{tr}((R_k-R_i)^TR_j)+{\|R_i-R_k\|}_F^2-\frac{1}{2}{\|R_i-R_k\|}_F^2 {\|R_i-R_j\|}_F^2 \\
&  \hspace{0.5cm} =2\left(\mathrm{tr}(R_k^TR_j)-\mathrm{tr}(R_i^TR_j)\right)+\|R_i-R_k\|_F^2\left(1-\frac{1}{2} {\|R_i-R_j\|}_F^2\right)  \\
& \hspace{0.5cm} =2\left(\mathrm{tr}(R_k^TR_j)-\mathrm{tr}(R_i^TR_j)\right)+{\|R_i-R_k\|}_F^2\left(\mathrm{tr}(R_i^TR_j)-2\right),
\label{dcons:ineq1}
%\end{aligned}
\end{align}
where for the last equality we used \eqref{eqn:Fdist-ip}. \newline

Similarly, by interchanging indices $i$ and $j$, one can also obtain
\begin{equation}
\label{dcons:ineq2}
\mathrm{tr}(R_i^TR_k-R_i^TR_jR_k^TR_j)\geq 2(\mathrm{tr}(R_k^TR_i)-\mathrm{tr}(R_i^TR_j))+\|R_j-R_k\|_F^2\left(\mathrm{tr}(R_i^TR_j)-2\right).
\end{equation}

Now apply trace to \eqref{eqn:dt-prod} and use \eqref{dcons:ineq1} and \eqref{dcons:ineq2} to get
\begin{align}
\begin{aligned}  \label{dcons:ineq-total}
\frac{d}{dt}\mathrm{tr}(R_i^TR_j) &\geq\frac{2}{\N}\sum_{k=1}^N\left( \G_{ik}(\mathrm{tr}(R_k^TR_j)-\mathrm{tr}(R_i^TR_j))+\G_{jk}(\mathrm{tr}(R_k^TR_i)-\mathrm{tr}(R_i^TR_j) \right)   \\
&\quad +\frac{1}{\N} \left(\mathrm{tr}(R_i^TR_j)-2 \right)\sum_{k=1}^N \left(\G_{ik}{\|R_i-R_k\|}_F^2+\G_{jk}{\|R_j-R_k\|}_F^2\right).
\end{aligned}
\end{align}

In the following theorem, we use the inequality \eqref{dcons:ineq-total} to derive exponential consensus.
\begin{thm}
\emph{(Asymptotic consensus II)}
\label{thm:consensus-d2} Let $K$ satisfy \ref{hyp:K} and $r<\pi/6$ be a fixed positive number. Suppose that  \\[2pt]
\indent i) $g'(s) \geq \c$ for all $s \in [0,4r^2]$, for some $\c>0$,  \\[2pt]
\indent ii) initial points $\{ R_i^0 \}_{i=1}^\N$ satisfy $(R_i^0)_{i=1}^\N \subset \overline D_r$,  \\[2pt]
and let $(R_i(t))_{i=1}^\N$ be the global solution to \eqref{eqn:model-discrete} (or equivalently, \eqref{eqn:model-disc2}) whose well-posedness is guaranteed by Proposition \ref{prop:inv-disc}. Then, one has
\[ \lim_{t \to \infty} d(R_i(t), R_j(t)) = 0, \quad \mbox{exponentially fast}~~\mbox{for all $i,j\in\{1,\dots \N\}$}. \]
\end{thm}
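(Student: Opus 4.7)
The plan is to exploit the key inequality \eqref{dcons:ineq-total} already derived above the statement, applied at the worst pair. I define $M(t):=\min_{1\le i,j\le \N}\mathrm{tr}(R_i(t)^{T}R_j(t))$; consensus is equivalent to $M(t)\to 3$, since $\mathrm{tr}(R^{T}R)=3$ for every rotation. By Proposition~\ref{prop:inv-disc} the trajectories stay in $\overline D_r$, so $d(R_i(t),R_j(t))\le 2r<\pi/3$ and hence $\mathrm{tr}(R_i^{T}R_j)=1+2\cos d(R_i,R_j)>2$ uniformly in $t$. This positivity of the factor $\mathrm{tr}(R_i^{T}R_j)-2$ makes the last term of \eqref{dcons:ineq-total} nonnegative, so it may be discarded. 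Moreover, the assumption $g'\ge \c$ on $[0,4r^{2}]$ together with $s/\sin s\ge 1$ on $[0,\pi/2)$ gives the uniform lower bound $\G_{ij}\ge \c$ along the dynamics.

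At any time $t$ where the minimum $M(t)$ is attained by some pair $(i^{*},j^{*})$, I substitute $i=i^{*}$, $j=j^{*}$ in \eqref{dcons:ineq-total} and retain only the $k=j^{*}$ term in the first sum and the $k=i^{*}$ term in the second sum. Each of these contributions equals $\G_{i^{*}j^{*}}(3-M)\ge \c(3-M)$, while all other terms in both sums are nonnegative by the minimality of $M$. This produces the pointwise bound
\[
\frac{d}{dt}\mathrm{tr}(R_{i^{*}}^{T}R_{j^{*}})\ \ge\ \frac{4\c}{\N}\bigl(3-M(t)\bigr).
\]

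The main technical obstacle is that $M(t)$ is generally only locally Lipschitz, not $C^{1}$, as a minimum of the finitely many smooth functions $f_{ij}(t):=\mathrm{tr}(R_i(t)^{T}R_j(t))$. I handle this by a standard Dini derivative argument: for $h>0$, taking $(i(h),j(h))$ minimizing at $t+h$, one has $M(t+h)-M(t)\ge f_{i(h)j(h)}(t+h)-f_{i(h)j(h)}(t)$; finiteness of the index set and continuity of $M$ allow one to extract a sequence $h_n\to 0^{+}$ along which $(i(h_n),j(h_n))$ is a fixed pair lying in the active set $I(t)$. Passing to the limit gives $D_{+}M(t)\ge\frac{4\c}{\N}(3-M(t))$. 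A standard comparison principle for scalar differential inequalities formulated in terms of Dini derivatives then yields
\[
3-M(t)\le\bigl(3-M(0)\bigr)e^{-4\c t/\N}, \qquad t\ge 0.
\]

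To convert this into a bound on the geodesic distance, I use \eqref{eqn:Fdist-ip} to write
\[
\|R_i(t)-R_j(t)\|_{F}^{2}=6-2\mathrm{tr}(R_i(t)^{T}R_j(t))\le 2\bigl(3-M(t)\bigr)\le 2\bigl(3-M(0)\bigr)e^{-4\c t/\N},
\]
and then apply the identity $\|R-Q\|_{F}=2\sqrt{2}\sin(d(R,Q)/2)$ combined with Jordan's inequality $\sin(\theta/2)\ge\theta/\pi$ on $[0,\pi]$ to obtain $d(R_i,R_j)\le\frac{\pi}{2\sqrt{2}}\|R_i-R_j\|_{F}$. This delivers the claimed exponential decay of $d(R_i(t),R_j(t))$ at rate $2\c/\N$.
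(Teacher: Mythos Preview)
Your proof is correct and shares the same starting point as the paper --- the key inequality \eqref{dcons:ineq-total} applied at the minimizing pair --- but the two arguments diverge from there in two respects.

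First, the paper discards the \emph{first} sum in \eqref{dcons:ineq-total} (which is nonnegative at the minimizing pair) and keeps only the $k=j_m$ and $k=i_m$ terms of the \emph{second} sum, obtaining the logistic-type inequality
\[
\frac{d}{dt}\mathcal{T}_m(t)\ \ge\ -\frac{4\c}{\N}\bigl(\mathcal{T}_m(t)-2\bigr)\bigl(\mathcal{T}_m(t)-3\bigr),
\]
which it integrates explicitly. You do the opposite: you discard the second sum (nonnegative because $r<\pi/6$ forces $\mathrm{tr}(R_i^TR_j)>2$) and keep the $k=j^{*}$ and $k=i^{*}$ terms of the first sum, obtaining the simpler linear inequality $D_+M\ge\frac{4\c}{\N}(3-M)$. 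Both routes give exponential decay of $3-M$ at the same rate $4\c/\N$; yours leads to a one-line Gronwall rather than a logistic integration.

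Second, to handle the non-smoothness of the pointwise minimum, the paper invokes analyticity of the trajectories to partition $[0,T]$ into finitely many subintervals on which the minimizing pair is constant, and argues on each piece. You instead use a lower Dini derivative together with the standard right-differentiability of a minimum of finitely many $C^1$ functions. Your approach is more elementary in that it avoids the analyticity claim (which the paper states but does not justify), at the cost of appealing to a comparison principle formulated for Dini derivatives; both are standard and valid.
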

\begin{proof} We set 
\[
\mathcal{T}_m(t)=\min_{i, j}\mathrm{tr}\left(R_i^T(t)R_j(t)\right).
\]
Since $\{R_i(t)\}_{i=1}^{\N}$ is an analytic solution, for fixed time $T>0$ there exists
\[
0=t_0<t_1<\cdots<t_M=T,
\]
and
\[
i_m, j_m\in\{1, 2, \cdots, \N\} \quad \text{ for all } m\in\{1, 2, \cdots, M\},
\]
such that
\[
\mathcal{T}_m(t)=\mathrm{tr}\left(R_{i_m}^T(t)R_{j_m}(t)\right) \quad \text{ for all }  t\in[t_{m-1},t_m].
\]
For fixed $m\in\{1, 2, \cdots, M\}$, it follows from \eqref{dcons:ineq-total} and the definition of $i_m$ and $j_m$ that we have the following inequality for  $t\in (t_{m-1},t_m)$:
\begin{align*}
\begin{aligned}
& \frac{d}{dt}\mathrm{tr}(R_{i_m}^TR_{j_m}) \\
& \hspace{0.5cm} \geq\frac{2}{\N}\sum_{k=1}^N\left( \G_{i_mk}(\mathrm{tr}(R_k^TR_{j_m})-\mathrm{tr}(R_{i_m}^TR_{j_m}))+\G_{j_mk}(\mathrm{tr}(R_k^TR_{i_m})-\mathrm{tr}(R_{i_m}^TR_{j_m}) \right)\\
& \hspace{0.7cm}+\frac{1}{\N}\left(\mathrm{tr}(R_{i_m}^TR_{j_m})-2\right)\sum_{k=1}^N \left(\G_{i_mk} {\|R_{i_m}-R_k\|}_F^2+\G_{j_mk}{\|R_{j_m}-R_k\|}_F^2 \right)\\
&  \hspace{0.5cm} \geq \frac{1}{\N}\left(\mathrm{tr}(R_{i_m}^TR_{j_m})-2\right)\sum_{k=1}^N \left(\G_{i_mk} {\|R_{i_m}-R_k\|}_F^2+\G_{j_mk}{\|R_{j_m}-R_k\|}_F^2 \right)\\
&  \hspace{0.5cm} \geq\frac{1}{\N} \left(\mathrm{tr}(R_{i_m}^TR_{j_m})-2\right) \left(\G_{i_m j_m}{\|R_{i_m}-R_{j_m}\|}_F^2+\G_{j_m i_m}
{\|R_{j_m}-R_{i_m}\|}_F^2 \right).
%&=\frac{2}{\N} \G_{i_mj_m} \left(\mathrm{tr}(R_{i_m}^TR_{j_m})-2 \right){\|R_{i_m}-R_{j_m}\|}_F^2.
\end{aligned}
\end{align*}
Finally, since by symmetry $\G_{i_m j_m} = \G_{j_m i_m}$, we have
\begin{align}
%\begin{aligned}
\frac{d}{dt}\mathrm{tr}(R_{i_m}^TR_{j_m}) &\geq \frac{2}{\N} \G_{i_mj_m} \left(\mathrm{tr}(R_{i_m}^TR_{j_m})-2 \right){\|R_{i_m}-R_{j_m}\|}_F^2 \\
& =\frac{2}{\N} \G_{i_mj_m} \left(\mathrm{tr}(R_{i_m}^TR_{j_m})-2\right) \left(6-2\mathrm{tr}(R_{i_m}^TR_{j_m}) \right),
 \label{E-2}
%\end{aligned}
\end{align}
for all $t\in (t_{m-1},t_m)$. \newline

Now note that at the initial time any two particles $R_i^0,R_j^0$ satisfy $d(R_i^0,R_j^0) \leq 2r <\frac{\pi}{3}$, which by \eqref{eqn:geod-dist} it implies that $\mathrm{tr}((R_i^0)^TR_j^0)>2$. Also, by the Cauchy-Schwartz inequality, 
\[ \mathrm{tr}((R_i^0)^TR_j^0) \leq {\| R_i^0 \|}_F {\| R_j^0 \|}_F = 3. \]
Consequently, $\mathcal{T}_m(0) \in (2,3]$.  By the assumption on $g'$, one has 
\[ \G_{i_m j_m} \geq \c, \]
and hence, it follows from \eqref{E-2} that
\[
\frac{d}{dt}\mathcal{T}_m(t)\geq -\frac{4 c}{\N}(\mathcal{T}_m(t)-2)(\mathcal{T}_m(t)-3),
\]
for as long as $\mathcal{T}_m(t) \in (2,3]$. The differential inequality can be integrated to get:
\begin{equation}
\label{ineq:E-3}
\mathcal{T}_m(t)\geq 3+\frac{(\mathcal{T}_m(0)-3)\exp\left(-\frac{4 \c}{\N}t\right)}{(\mathcal{T}_m(0)-2)-(\mathcal{T}_m(0)-3)\exp\left(-\frac{4 \c}{\N}t\right)}.
\end{equation}
Finally, we combine the relation $\mathcal{T}_m(t)\leq 3$ (by the Cauchy-Schwartz inequality) and \eqref{ineq:E-3}, one has 
\[ \mathcal{T}_m(t) \in (2,3] \quad \mbox{for all $t \in [0,t_1)$.} \] 
The procedure can be continued to find that \eqref{ineq:E-3} holds for all $t \in [0,T]$, for arbitrary $T>0$. In particular, we derive that $\mathcal{T}_m(t)$ converges exponentially fast to $3$ as $t\to \infty$, which by \eqref{eqn:geod-dist}, it is equivalent to $\min_{i,j} \cos(d(R_i(t),R_j(t)))\to 0$ exponentially fast as $t \to \infty$. Consequently,  $\max_{i,j} d(R_i(t),R_j(t))\to 0$ exponentially fast as $t \to \infty$.
\end{proof}

\vspace{0.5cm}

Next, we present some examples of interaction potentials  satisfying the assumptions in Theorems \ref{thm:consensus-cont}, \ref{thm:consensus-d1} and \ref{thm:consensus-d2}. 

\paragraph{Examples.} 
\begin{enumerate}
\item {\em (Power-law potentials).} Consider the purely attractive power-law potential with exponent $q$:
\[
K(R, Q)=\frac{1}{q}d(R, Q)^{q}, \qquad \text { for } g(s)=s^{\frac{q}{2}}.
\]
This potential satisfies the assumptions of Theorem \ref{thm:consensus-cont} (see Remark \ref{rem:gG}) and Theorem  \ref{thm:consensus-d1} for $q=2$ (quadratic potential) and $q\geq 4$. For example, in Theorem \ref{thm:consensus-d1} one has indeed that $g$ is of class $C^2$ and that $g'(s) \geq \c s^\alpha$ for all $s\in[0,\infty)$, with $\c=\frac{q}{2}$ and $\alpha = \frac{q}{2}-1$. The assumptions of Theorem  \ref{thm:consensus-d2} are satisfied only for $q=2$, the quadratic potential. 

In Section \ref{sect:numerics}, we will present some numerical experiments with power-law potentials for the aggregation model on $SO(3)$.

\item {\em (Potential in the Lohe sphere model).} The following potential
\[
K(R,Q)=2\sin^2\left(\frac{{d(R,Q)}}{2}\right), \qquad \text{ for } g(s)=2 \sin^2\left(\frac{\sqrt{s}}{2}\right),
\]
was considered in recent works \cite{HaKoRy2018,HaKiLeNo2019} for the Lohe sphere model. By a direct calculation,
\[
g'(s)=\frac{1}{2} \frac{\sin\frac{\sqrt{s}}{2}}{\frac{\sqrt{s}}{2}}\cos\frac{\sqrt{s}}{2},
\]
and hence, $\G(s)=\frac{1}{2}$ for all $s>0$. One can also check that $g$ is of class $C^2$.

The assumptions of Theorem \ref{thm:consensus-cont} are trivially satisfied, since $\G$ is a constant function. Also, for $0<r<\pi/4$, it holds that
\[
g'(s)\geq \frac{\cos r}{2}, \qquad \mbox{for all $s \in [0,4r^2]$},
\]
so $g'$ satisfies the lower bound assumptions in both Theorems \ref{thm:consensus-d1} and \ref{thm:consensus-d2} (for the former take $\c= \cos(r) /2$ and $\alpha =0$).
\end{enumerate}

%%%%%%%%%%

\section{Numerical results}
\label{sect:numerics}

We present some numerical experiments for the discrete model \eqref{eqn:model-discrete}, which we solve numerically using the angle-axis representation. Specifically, we write \eqref{eqn:model-discrete}  as an ODE system for the angle-axis pairs $(\theta_i,\bfv_i )$, where $\theta_i \hat \bfv_i = \log R_i$, $i=1, \dots, N$, and solve it numerically with the 4th order Runge-Kutta method. In all simulations we have initialized $\theta_i$ randomly in the interval $(0,\pi/4)$, while the unit vectors $\bfv_i$ were generated in spherical coordinates, with the polar and azimuthal angles drawn randomly in the intervals $(0,\pi)$ and $(0,2 \pi)$, respectively. Consequently, all rotation matrices $R_i$ at time $t=0$ are within distance $\pi/4$ from the identity matrix and hence, satisfy the assumptions of Theorem \ref{thm:consensus-d1}.

For plotting, we identify $SO(3)$ with a ball in $\R^3$ of radius $\pi$ centred at the origin. The identity matrix $I$ corresponds to the centre of the ball, while an arbitrary point within this ball represents a rotation matrix, with rotation angle given by the distance from the point to the centre, and axis given by the ray from the centre to the point. For a correct representation, antipodal points on the surface of the ball have to be identified, as they represent the same rotation matrix (rotation by $\pi$ about a ray gives the same result as rotation by $\pi$ about the opposite ray).

We present numerical experiments with two types of potentials, power-law and Morse-type, both considered in the context of intrinsic interactions.  A general power-law potential reads:
\begin{equation}
\label{eqn:K-pl}
 K(R,Q) = -\frac{1}{p}d(R,Q)^p + \frac{1}{q}d(R,Q)^q,
\end{equation}
where the exponents $p$ and $q$ (with $p<q$) correspond to repulsive and attractive interactions, respectively. The case of purely attractive power-law potentials was discussed in Section \ref{subsect:consensus-disc} (see Example 1). A (generalized) Morse-type potentials \cite{CaHuMa2014} is given by:
\begin{equation}
\label{eqn:K-gM}
K(R,Q)  = V(d(R,Q)) - CV (d(R,Q)/l),
\end{equation}
where
\begin{equation}
\label{eqn:Morse}
V(r) = -e^{-\frac{r^s}{s}}, \qquad \text{ with } s > 0,
\end{equation}
and $C$, $l$ are positive constants, which control the relative size and range of the repulsive interactions. 

Both power-law and Morse-type potentials have been widely used in the aggregation literature for Euclidean spaces \cite{Balague_etalARMA, BaCaLaRa2013, ChFeTo2015, FeHuKo11, FeHu13}. In particular, Morse-type potentials can enable explicit calculations of equilibrium solutions \cite{BeTo2011,CaHuMa2014}. It was shown that such potentials can lead, through a delicate balance between attraction and repulsion, to a diverse set of equilibria, such as aggregations on disks, annuli, rings and delta concentrations \cite{KoSuUmBe2011,FeHuKo11,CaHuMa2014}. Potentials of form \eqref{eqn:K-gM}-\eqref{eqn:Morse} have been also used in other models for swarming and flocking \cite{Chuang_etal}. 

In Figure \ref{fig:consensus} we show results for several simulations using $\N=20$ particles and purely attractive power-law potentials (potential in the form \eqref{eqn:K-pl}, but with no repulsion term). The plots in Figure \ref{fig:consensus}(a) and (b) correspond to an attractive quadratic potential ($q=2$). Initial particles located at black dots in Figure \ref{fig:consensus}(a) achieve asymptotic consensus at the point indicated by red diamond. The initial values of $\theta_i$ and their asymptotic state are shown in Figure \ref{fig:consensus}(b) by black and red circles, respectively. Note that for visualization purposes we do not show the full ball of radius $\pi$ in Figure \ref{fig:consensus}(a), as we set the axis limits to $[-1,1]$.  Figure \ref{fig:consensus}(c) illustrates the speed of convergence to consensus. It shows the evolution in time of the diameter $\Delta$ of the discrete set $\{R_i\}_{i=1}^N$ (see \eqref{eqn:Delta}) for different exponents $q$ of the attractive potential. 
%All these potentials satisfy the assumptions of Theorem \ref{thm:consensus-d1}. 
Note that the larger the value of the exponent is, the slower the convergence to consensus. We also point out the exponential convergence for $q=2$, see Theorem \ref{thm:consensus-d2}. 

Figure \ref{fig:patterns} shows two equilibria of the discrete model obtained by running simulations with $\N=40$ particles to steady state.  Figure \ref{fig:patterns}(a), corresponding to a power-law potential with $p=2$ and $q=10$, shows an aggregation at four points. The distances to the identity matrix of these points are $0.5505$, $0.6002$, $0.6580$, $0.6678$, respectively. For Figure \ref{fig:patterns}(b) we used a Morse-type potential with $C=0.5$, $l=0.25$ $s=2$. The equilibrium locations (indicated by black dots) appear to lie on a geodesic sphere centred at the point indicated by red diamond. To find the centre we calculated numerically the Riemannian centre of mass of the equilibrium configuration by the intrinsic gradient descent algorithm investigated in \cite{AfsariTronVidal2013} (recall that the Riemannian centre of mass of a set of points on a manifold minimizes the sum of squares of the geodesic distances to the data points). We found the Riemannian centre of mass located at $\theta_{_C}=0.0958$, $\hat{v}_{_C}=(-0.8654,0.0701,0.4961)$.
%$\theta_{_C}=0.0270$, $\hat{v}_{_C}=(0.2758,0.5116,-0.8138)$. 
The average distance of the equilibrium points to the centre of mass is $R =  0.3619$, with a standard deviation of $1.9 \times 10^{-4}$. Note that axis limits in Figure \ref{fig:patterns}(b) are set at $[-1,1]$, so we do not show the entire ball of radius $\pi$ there. Qualitatively similar equilibria were obtained with other parameter values as well.

\begin{figure}[!htbp]
 \begin{center}
 \begin{tabular}{ccc}
 \includegraphics[width=0.35\textwidth]{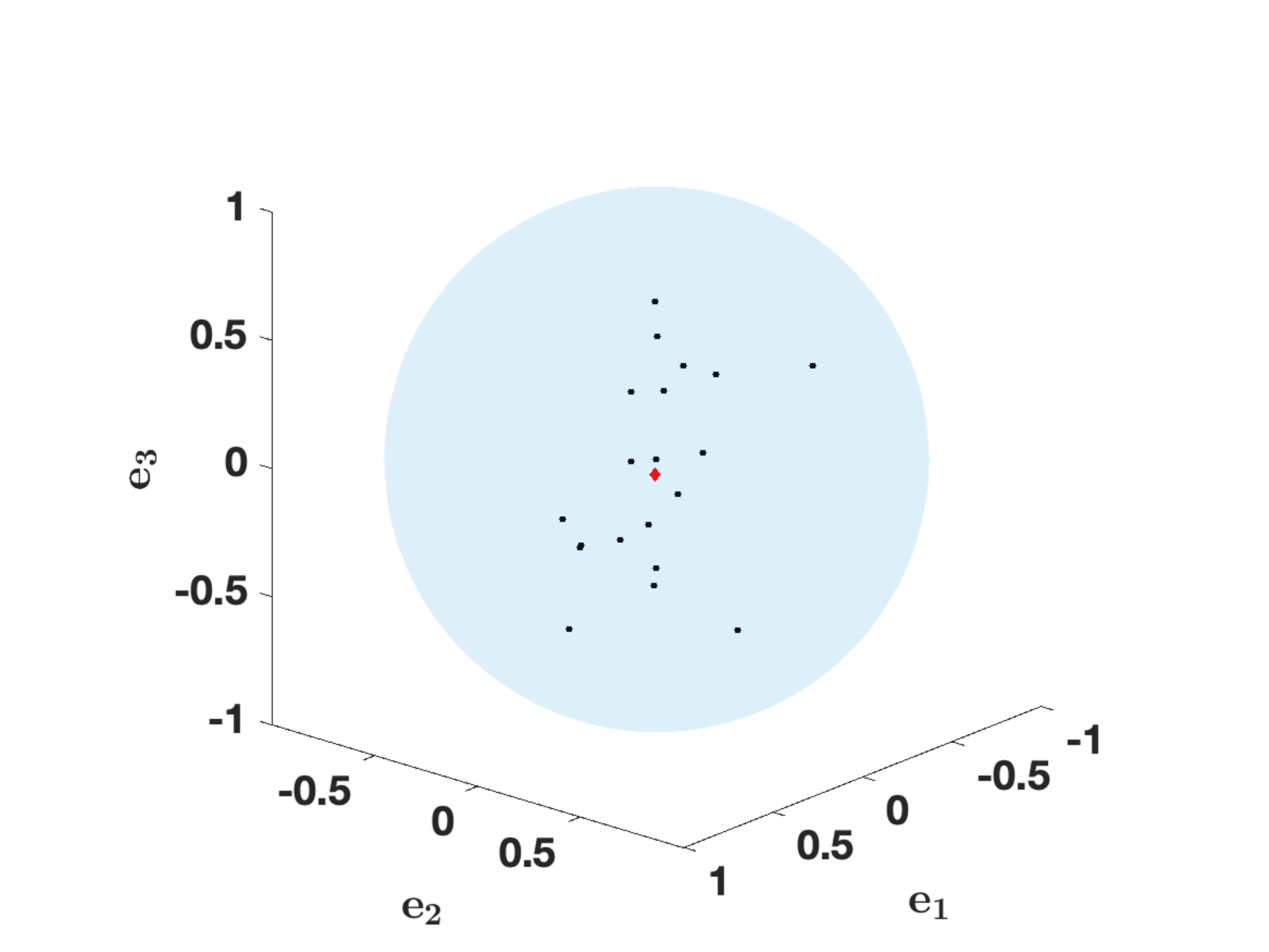} & 
 \includegraphics[width=0.3\textwidth]{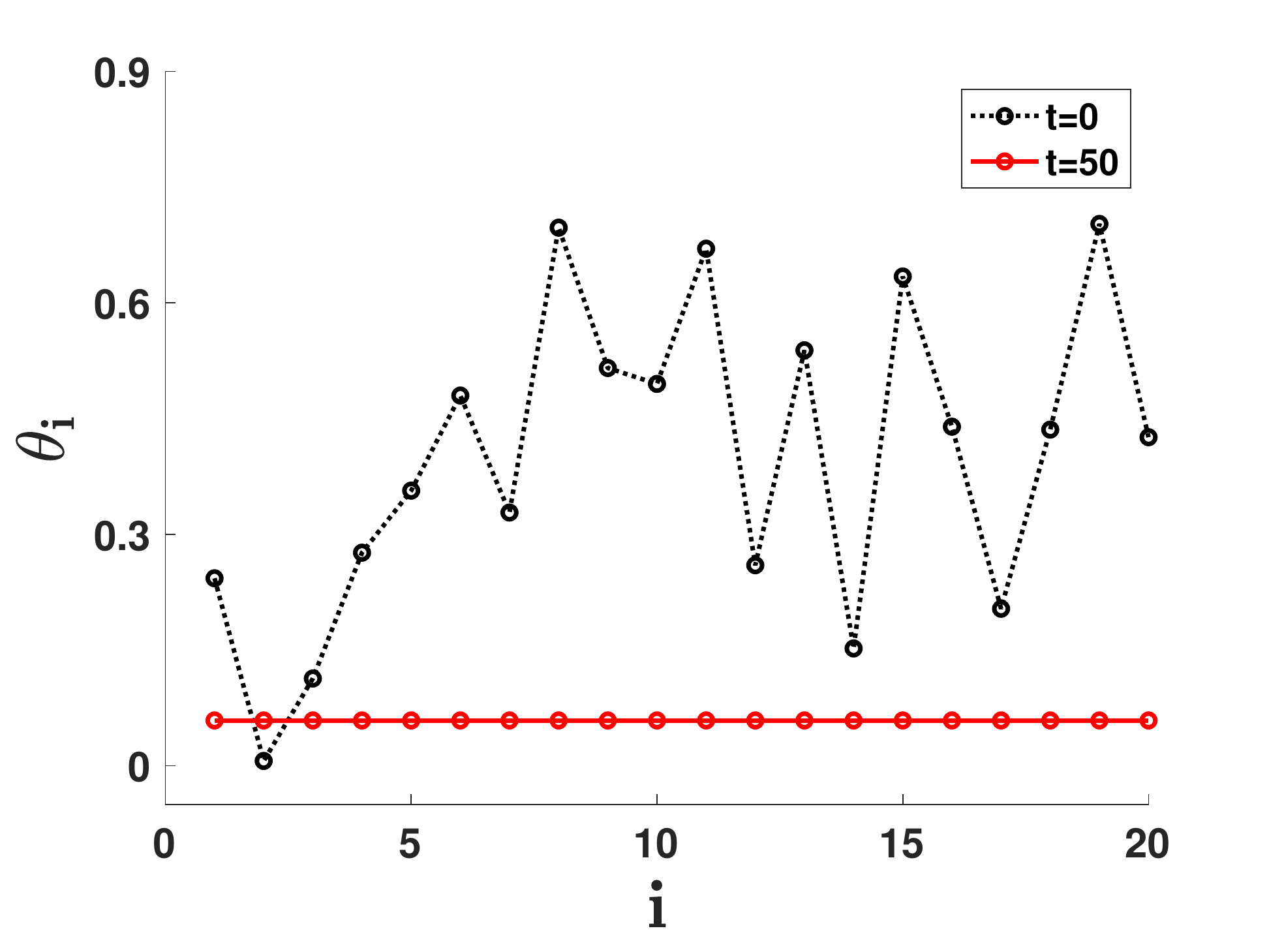}  &
 \includegraphics[width=0.3\textwidth]{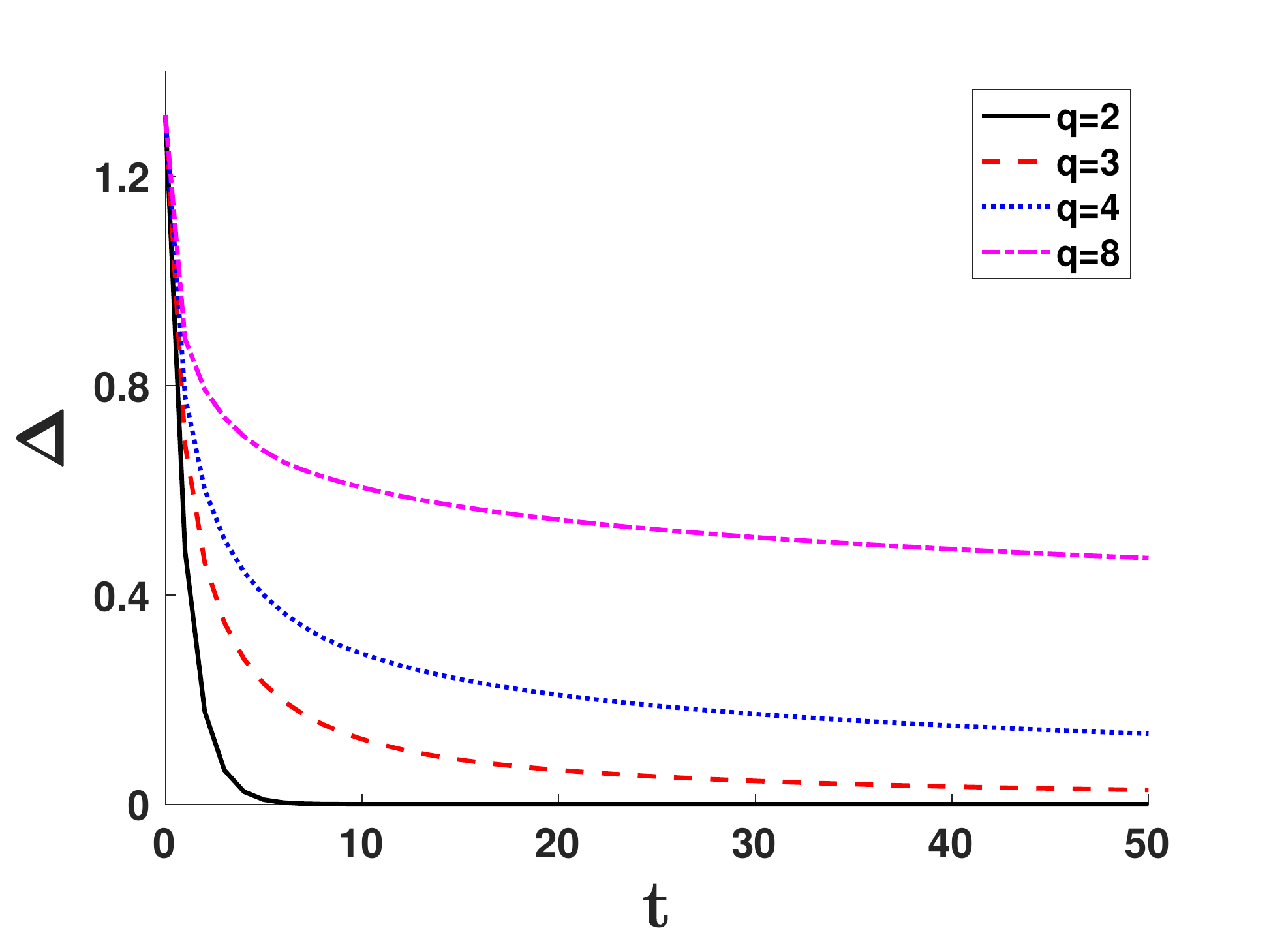} \\
 (a) & (b) & (c)  \end{tabular}
 \begin{center}
 \end{center}
\caption{Asymptotic consensus of $\N=20$ particles with attractive power-law potentials. (a) Quadratic potential ($q=2$). Initial particles are indicated by black dots, and the consensus location is shown with red diamond. (b) Same simulation as in (a), showing the initial values of $\theta_i$ and their asymptotic state (black and red circles, respectively). (c) Convergence to consensus for different values of $q$. Note the slower convergence with increasing $q$. The convergence is exponential for $q=2$ -- see Theorem \ref{thm:consensus-d2}.}
\label{fig:consensus}
\end{center}
\end{figure}

\begin{figure}[!htbp]
 \begin{center}
 \begin{tabular}{cc}
 \includegraphics[width=0.49\textwidth]{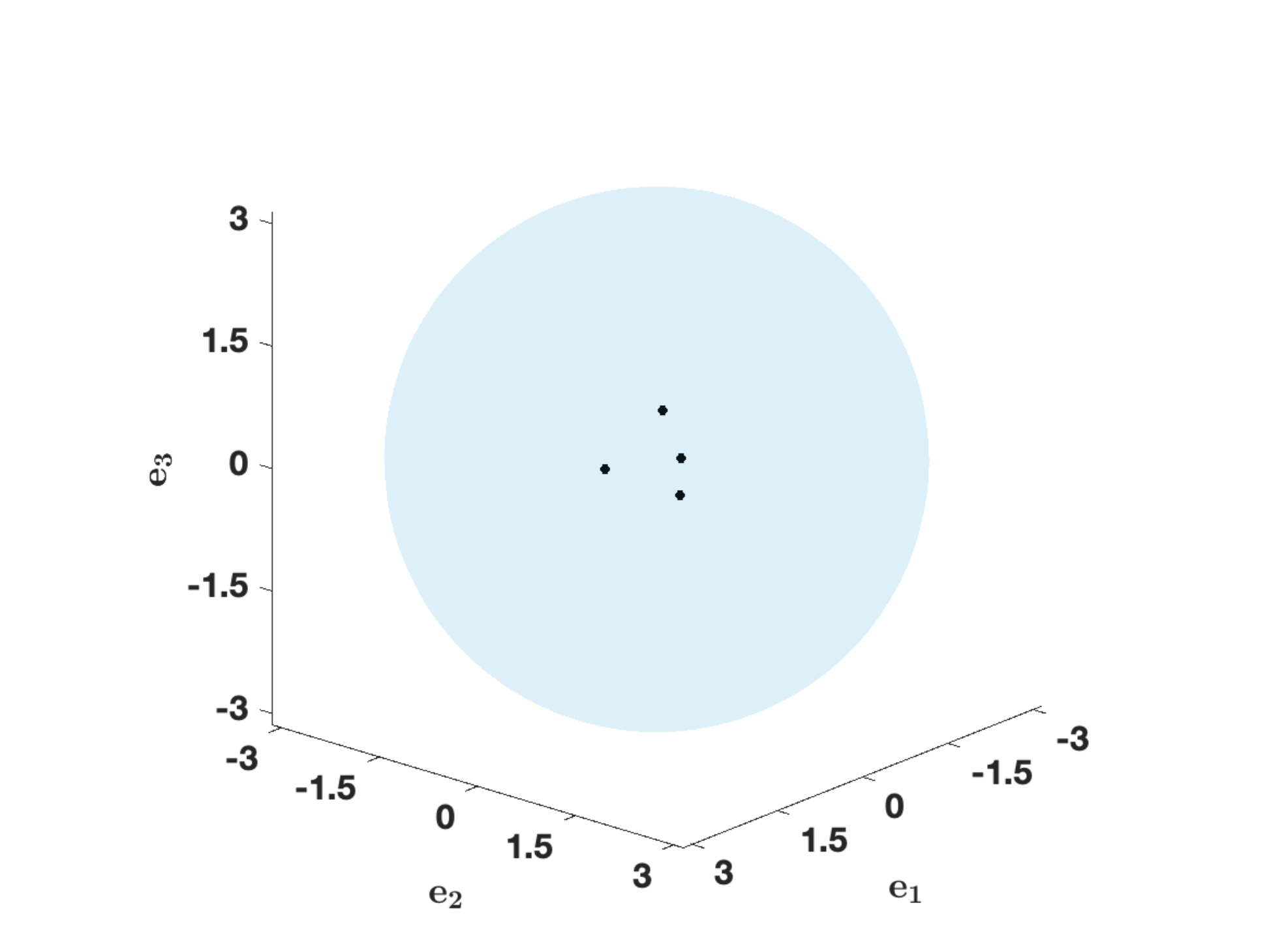} & 
 \includegraphics[width=0.45\textwidth]{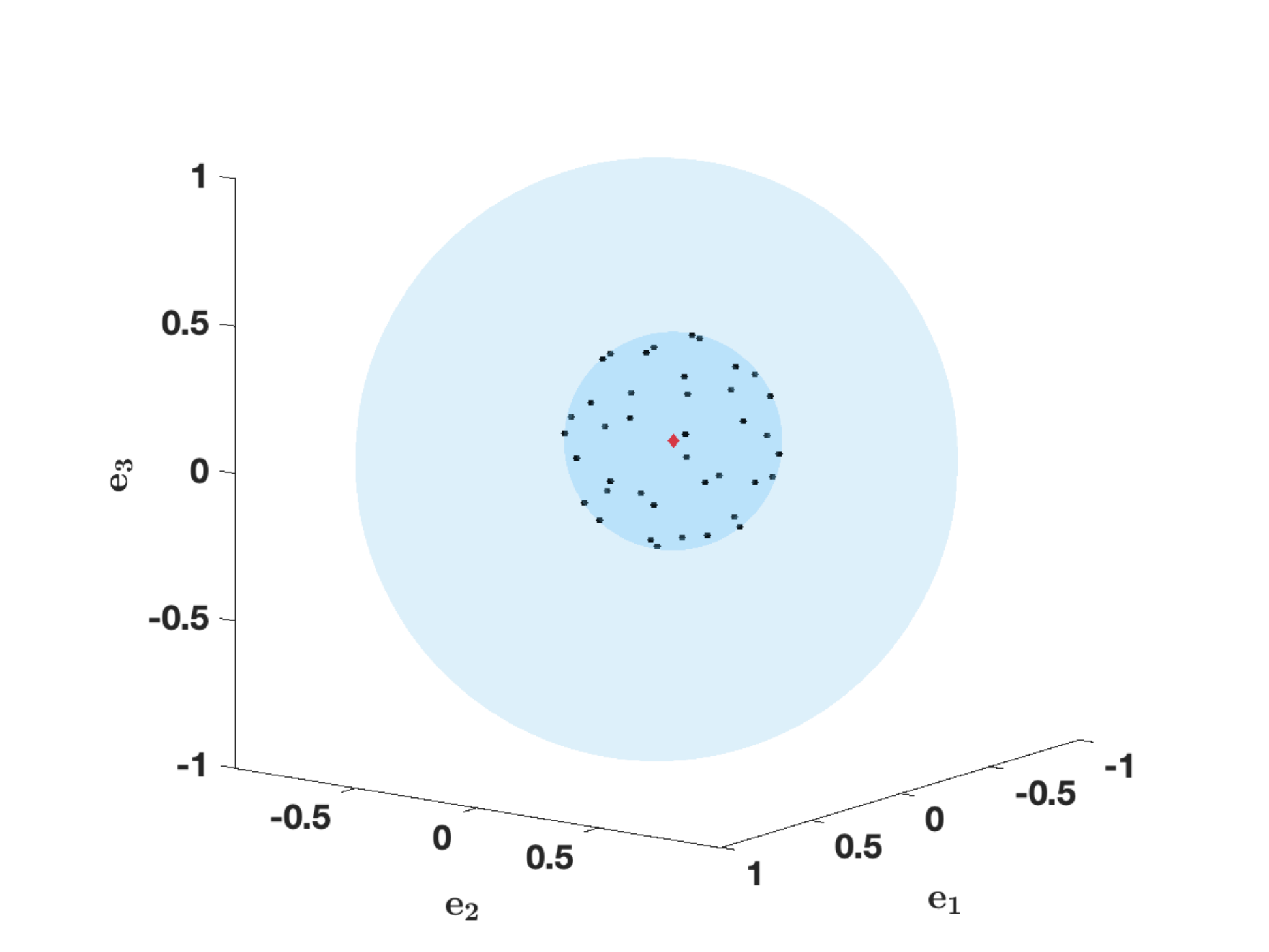}  \\
 (a) & (b) 
 \end{tabular}
 \begin{center}
 \end{center}
\caption{Equilibrium solutions of the discrete model. (a) Equilibrium aggregation at four points. The simulation corresponds to a power-law potential with $p=2$ and $q=10$. (b) Equilibrium solution (black dots) lying on a geodesic sphere in $SO(3)$. The simulation used a Morse-type potential with $C=0.1$, $l=0.2$, and $s=2$. The centre of the sphere (indicated by a red diamond) is the numerical Riemannian centre of mass of the equilibrium configuration. The average distance of the equilibrium points to the centre of mass is $R =  0.3619$, with a standard deviation of $1.9 \times 10^{-4}$.}
\label{fig:patterns}
\end{center}
\end{figure}

The numerical experiments presented here offer only a glimpse on the possible equilibria that can be obtained with the intrinsic model investigated in this paper. We expect the model to capture a rich set of pattern formations, motivating further research and developments on intrinsic self-organization on manifolds, and rotation group in particular.

%%%%%%%%%%

\begin{appendix}
\section{Appendix}
In this appendix, we briefly present basic materials on flows on manifold, interaction velocity field and linear algebra which have been used in the proceeding sections of the paper.

\subsection{Flows on manifolds}
\label{subsect:A-wp}
Consider a smooth, complete and connected $\dim$-dimensional Riemannian manifold $\M$ with the intrinsic distance $d$. Denote the Euclidean distance in $\R^\dim$ by $\| \cdot \|_{\R^\dim}$, and let $T\in(0,\infty]$ denote a generic final time and $\U$ a generic open subset of $\M$. \smallskip

{\em Well-posedness of flow maps.}  Local well-posedness of the flow map equation \eqref{eq:characteristics-general} on an arbitrary manifold $\M$ can be established in local charts using standard ODE theory. The notions of Lipschitz continuity and boundedness on charts of a vector field on $\U$ are defined as follows.
\begin{defn}[Lipschitz continuity and boundedness on charts]\label{defn:Lip}
	Let $X$ be a vector field on $\U$. We say that $X$ is \emph{locally Lipschitz continuous on charts} if for every chart $(U,\varphi)$ of $\M$ and compact set $\Q\subset U \cap \U$, there exists $L_{\varphi,Q}>0$ such that
	\be\label{eq:Lip-vf}
		\norm{\varphi_*X(x) - \varphi_*X(y)}_{\R^\dim} \leq L_{\varphi,\Q} \norm{\varphi(x)-\varphi(y)}_{\R^\dim}, \quad \mbox{for all $x,y\in \Q$},
	\ee
where $\varphi_*$ stands for the push-forward of $\varphi$. We denote by $\norm{X}_{\mathrm{Lip}(\varphi,\Q)}$ the smallest such constant. 

We say that $X$ is \emph{locally bounded on charts} if for every chart $(U,\varphi)$ of $\M$ and compact set $\Q\subset U\cap \U$, there exists $C_{\varphi,\Q}>0$ such that
	\bes
		\norm{\varphi_*X(x)}_{\R^\dim} \leq C_{\varphi,\Q}, \qquad \mbox{for all $x\in \Q$}.
	\ees
We denote by $\norm{X}_{L^\infty(\varphi,\Q)}$ the smallest such constant.
\end{defn}

The local well-posedness of flows generated by locally Lipschitz continuous on charts vector fields is given by the following Cauchy--Lipschitz theorem.
\begin{thm}[Cauchy--Lipschitz]\label{thm:Cauchy-Lip}
	Let $\a\in(0,\infty]$ and let $X$ be a time-dependent vector field on $\U\times [0,\a)$. Suppose that the vector fields in $\{X_t\}_{t\in [0,T)}$ are locally Lipschitz continuous on charts and satisfy, for any chart $(U,\varphi)$ of $\M$ and compact sets $\Q\subset U\cap \U$ and $\calS \subset [0,\a)$,
	\be\label{eq:bdd-Lip}
		\int_\calS \left( \norm{X_t}_{L^\infty(\varphi,\Q)} + \norm{X_t}_{\mathrm{Lip}(\varphi,\Q)} \right) \d t < \infty.
	\ee
Then, for every compact subset $\Sigma$ of $\U$, there exists a unique maximal flow map generated by $(X,\Sigma)$.
\end{thm}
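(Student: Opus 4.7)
The plan is to pass to local charts and invoke the Carathéodory form of the Picard--Lindelöf theorem in $\R^\dim$. Since $\Sigma$ is compact and $\U$ is open, I would first cover $\Sigma$ with finitely many chart domains $(U_j,\varphi_j)$ of $\M$ together with compact sets $K_j \subset U_j \cap \U$ whose interiors still cover $\Sigma$. Within one such chart, the pushed-forward vector field $Y_t := (\varphi_j)_* X_t$ is defined on the compact set $\varphi_j(K_j) \subset \R^\dim$. Hypothesis \eqref{eq:bdd-Lip}, together with Definition \ref{defn:Lip}, translates into $\|Y_t\|_{L^\infty(\varphi_j(K_j))}$ and the Lipschitz seminorm of $Y_t$ on $\varphi_j(K_j)$ being locally integrable in $t$, which is exactly the Carathéodory setting.

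For each $R_0 \in K_j$, the classical Picard iteration on a small closed ball around $\varphi_j(R_0)$ then furnishes a unique absolutely continuous solution of the initial value problem on some short interval, and the standard cylinder argument using the $L^1_t L^\infty_x$ bound on $Y_t$ provides a common lower bound $\tau_j > 0$ for the life span of any trajectory starting inside $K_j$. Setting $\tau_0 := \min_j \tau_j > 0$ and pulling back via the $\varphi_j^{-1}$'s yields local solutions on a neighborhood of each $R \in \Sigma$ on the common interval $[0,\tau_0)$, taking values in $\U$. Uniqueness within a single chart follows from Gronwall's inequality applied to the Euclidean distance of two solutions, and across overlapping charts it propagates by passing to a common chart. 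The local solutions can therefore be patched unambiguously into a continuous flow $\Psi^t_X \colon \Sigma \times [0,\tau_0) \to \U$.

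To obtain a maximal flow I would iterate: at time $\tau_0$, if $\tau_0 < \alpha$ and $\Psi^{\tau_0}_X(\Sigma)$ is still a compact subset of $\U$, it can be covered by a new finite collection of chart-compact pairs, and restarting the argument there extends the flow by a further positive increment. Defining
\[
\tau^{\ast} := \sup\bigl\{\tau \in (0,\alpha] : \text{a flow map } \Sigma \times [0,\tau) \to \U \text{ exists}\bigr\}
\]
and piecing the extensions together using chart-wise uniqueness produces the desired maximal flow on $\Sigma \times [0,\tau^{\ast})$. The main technical obstacle is securing the \emph{uniform-in-initial-data} lower bound on the life span, without which iteration could fail to extend the flow beyond $\tau_0$; this uniformity rests on the compactness of $\Sigma$ and on the chart-based uniform bounds that \eqref{eq:bdd-Lip} supplies via Definition \ref{defn:Lip}.
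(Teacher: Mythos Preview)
Your proposal is correct and follows the standard textbook route; the paper itself does not supply a proof but merely cites \cite[Chapter~12]{Lee2013}, \cite[Chapter~4]{AMR1988}, and \cite[Theorem~A.4]{FePaPa2020}, all of which proceed exactly as you outline---pass to charts, apply Carath\'eodory/Picard--Lindel\"of in $\R^\dim$, use compactness of $\Sigma$ for a uniform lifespan, and extend by iteration.
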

\begin{proof}
The proof can be found in a standard textbook material; see \cite[Chapter~12]{Lee2013} or \cite[Chapter~4]{AMR1988}. A proof is also included with Theorem A.4 in \cite{FePaPa2020}.
\end{proof}

In our context we will apply Theorem \ref{thm:Cauchy-Lip} to $\M = SO(3)$, which is a compact set. The Escape Lemma \cite[Chapter~12]{Lee2013} states that if an integral curve of a Lipschitz continuous vector field on a manifold is not global (i.e., not defined for all $t \in \R$), then the image of that curve cannot lie in any compact subset of the manifold. Consequently, Lipschitz continuous vector fields on compact manifolds that are defined at all times (i.e., $a=\infty$ above) generate global flows. 

For our purposes we will need to restrict the dynamics to certain subsets of $SO(3)$ (e.g., a suitable geodesic disk). For this reason, a global well-posedness result will also need to guarantee that the dynamics remains confined within such a subset. 
Use the notation
\bes
	D_r(p) = \{x\in\M \st d(x,p) < r\}, \quad \mbox{for any $p\in\M$ and $r>0$},
\ees
to denote the open disk in $\M$ of centre $p$ and radius $r$. Also, $\log_x$ below denotes the Riemannian logarithm map at $x \in \M$ (see \cite{Petersen2006}).

The following global version of the Cauchy--Lipschitz theorem will be needed in our study.
\begin{thm}[Global Cauchy--Lipschitz]\label{thm:global-Cauchy-Lip}
	Suppose that $\U$ is geodesically convex and  $\Sigma \subset \U$ is compact. Assume the same hypotheses as in Theorem \ref{thm:Cauchy-Lip}, and in addition,  that $\a=\infty$ and there exist $p\in\U$, and $r_\ast>r>0$ such that $\Sigma\subset \overline{D_r(p)} \subset D_{r_\ast}(p)\subset \U$ and
	\be\label{eq:attractive-general}
		\ap{-\log_x p,X(x,t)}_x \leq 0, \quad \mbox{for all $x\in D_{r_\ast}(p)\setminus D_r(p)$ and $t\in[0,\infty)$}.
	\ee
	Then, there exists a unique flow map $\Psi$ generated by $(X,\Sigma)$ defined on $\Sigma\times [0,\infty)$; furthermore, $\Psi(x,t)\in \overline{D_r(p)}$ for all $(x,t)\in\Sigma\times[0,\infty)$.
\end{thm}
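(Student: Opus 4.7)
My plan is to first apply the local Cauchy--Lipschitz result (Theorem \ref{thm:Cauchy-Lip}) to obtain a unique maximal flow map $\Psi$ generated by $(X,\Sigma)$ and defined on $\Sigma \times [0,\tau)$ for some $\tau \in (0,\infty]$. It then suffices to establish two things: (i) the invariance $\Psi(x,t) \in \overline{D_r(p)}$ for all $(x,t) \in \Sigma \times [0,\tau)$, and (ii) the global existence $\tau = \infty$. Condition \eqref{eq:attractive-general} expresses that $X$ points ``inward'' toward $p$ on the annular shell $D_{r_\ast}(p)\setminus D_r(p)$, and this will drive both steps; the ``buffer'' $r < r_\ast$ is essential, as it gives room for the argument to detect the first exit before the trajectory can reach the boundary of $\U$.

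\textbf{Invariance by a first-exit argument.} Fix $x\in\Sigma$ and study $\phi(t):=d(\Psi(x,t),p)^2$. Since $\U$ is geodesically convex and $p\in\U$, for each $y\in D_{r_\ast}(p)\subset\U$ there is a unique minimizing geodesic from $y$ to $p$ inside $\U$, so $\log_y p$ is well-defined and the squared-distance function to $p$ is smooth on $D_{r_\ast}(p)$ with gradient $\nabla d_p^2(y) = -2\log_y p$ (cf.\ \eqref{eqn:gradd}). The chain rule then yields
\begin{equation*}
\phi'(t) \;=\; 2\,\langle -\log_{\Psi(x,t)} p,\; X(\Psi(x,t),t)\rangle_{\Psi(x,t)},\qquad \text{whenever } \Psi(x,t)\in D_{r_\ast}(p),
\end{equation*}
which by hypothesis \eqref{eq:attractive-general} is $\leq 0$ on $D_{r_\ast}(p)\setminus D_r(p)$ (and this set includes $\partial D_r(p)$). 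Suppose toward a contradiction that $d(\Psi(x,t_0),p) > r$ for some $t_0 \in [0,\tau)$. The set $E := \{t\in[0,t_0] : \Psi(x,t)\in\overline{D_r(p)}\}$ is closed, contains $0$, and does not contain $t_0$; let $a:=\sup E$. Then $a < t_0$, $\Psi(x,a)\in\overline{D_r(p)}$ by closedness, $\phi(a) = r^2$ by continuity, and $\Psi(x,s)\notin \overline{D_r(p)}$ for all $s\in(a,t_0]$. By continuity again, there exists $\delta>0$ with $d(\Psi(x,s),p)<r_\ast$ on $[a,a+\delta)$, so $\Psi(x,s)\in D_{r_\ast}(p)\setminus D_r(p)$ there and $\phi'(s)\leq 0$. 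Integrating gives $\phi(s)\leq\phi(a)=r^2$ for $s\in[a,a+\delta)$, contradicting $d(\Psi(x,s),p)>r$ on $(a,a+\delta)$. This yields the invariance.

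\textbf{Global existence and the main obstacle.} Completeness of $\M$ together with Hopf--Rinow implies that the closed geodesic ball $\overline{D_r(p)}$ is compact. If $\tau<\infty$, then every trajectory $t\mapsto \Psi(x,t)$ with $x\in\Sigma$ remains in the fixed compact set $\overline{D_r(p)}\subset\U$ for $t\in[0,\tau)$. Covering $\overline{D_r(p)}$ by finitely many charts $(U_i,\varphi_i)$ with compact neighborhoods $\Q_i\subset U_i\cap\U$, the integrability condition \eqref{eq:bdd-Lip} applied on $\calS=[0,\tau]$ together with a standard Cauchy-criterion/Gronwall argument in each chart shows that $\Psi(x,\cdot)$ admits a limit as $t\to\tau^-$ lying in $\overline{D_r(p)}$; re-applying Theorem \ref{thm:Cauchy-Lip} at time $\tau$ then extends $\Psi$ past $\tau$, contradicting its maximality. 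Hence $\tau=\infty$. The delicate step is the invariance argument rather than the global extension: one must control the possibility that the trajectory touches $\partial D_r(p)$ tangentially or returns to it repeatedly, and one must justify differentiating $\phi$ on an open set where $d_p^2$ is smooth. Working with the first-exit time $a$ and exploiting the strict buffer $r<r_\ast$ (which keeps $\phi'(s)\leq 0$ on a full right-neighborhood of $a$) resolves both issues simultaneously.
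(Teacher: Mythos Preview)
Your proof is correct and follows essentially the same route as the paper: the paper's proof consists only of a reference to \cite{FePaPa2020} together with the informal remark that condition \eqref{eq:attractive-general} makes $X$ point inward on $\partial D_r(p)$ and a neighborhood outside it, so trajectories stay in $\overline{D_r(p)}$ and the Escape Lemma yields global existence. Your first-exit argument on $\phi(t)=d(\Psi(x,t),p)^2$ is exactly the standard way to make the invariance step rigorous, and your extension-past-$\tau$ argument is precisely the content of the Escape Lemma (which the paper invokes by name rather than reproving); so the two are the same in substance, with yours simply more detailed.
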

\begin{proof} We refer to \cite[Theorem A.4]{FePaPa2020} for the proof. In informal terms, condition \eqref{eq:attractive-general} states that the vector field $X$ is pointing ``inside'' the disk $D_r(p)$ at points on the boundary $\p D_r(p)$ and some of its outside vicinity. Hence, the dynamics remains contained in $\overline{D_r(p)}$ and also, by the Escape Lemma, the flow is global in time.
\end{proof}

%%%%%

\subsection{Flows for the interaction velocity field}
\label{subsect:A-cont}

We focus exclusively on the velocity field $\V[\rho]$ associated with the interaction equation (see equation \eqref{eqn:v-field}) set up on the rotation group. We fix a curve $\rho \in \Cont([0,T),\P(\U))$ and show that for an interaction potential that satisfies Hypothesis \ref{hyp:K}, $\V[\rho]$ satisfies the assumptions of Theorem \ref{thm:Cauchy-Lip}, and hence it generates a local flow map. This legitimates the definition of the map $\Gamma$ used in Theorem \ref{thm:well-posedness}. We also show that one can apply Theorem \ref{thm:global-Cauchy-Lip} to $\V[\rho]$ when the interaction potential is purely attractive; this will be used in Proposition \ref{prop:inv-cont} to establish the global well-posedness of solutions.

For simplicity, we assume that $\U$ is geodesically convex. In particular this implies that $\U$  can be covered by a single chart, which we will denote by $(\U,\psi)$; such a chart can be given by a normal chart for instance. Note that in our setup $\U = \S$, so this assumption is satisfied. Our first result establishes that the Lipschitz theory given in Theorem \ref{thm:Cauchy-Lip} applies to the interaction velocity field $\V[\rho]$.

\begin{lem}\label{lem:interaction-complete}
	Let $K$ satisfy \ref{hyp:K}, and let $\rho\in \Cont([0,T);\P(\U))$. Then the velocity field $\{\V[\rho](\cdot,t)\}_{t\in [0,T)}$ given by \eqref{eqn:v-field} satisfies the assumptions of the Cauchy--Lipschitz theorem \ref{thm:Cauchy-Lip}.
\end{lem}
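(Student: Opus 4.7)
The plan is to verify the two hypotheses of Theorem \ref{thm:Cauchy-Lip} applied to $v[\rho]$: local Lipschitz continuity on charts in the sense of Definition \ref{defn:Lip}, and the time-integrability condition \eqref{eq:bdd-Lip}. The observation that makes the lemma essentially immediate is that the hard analytic work has already been done in Lemma \ref{lem:dist-flow-maps-K}: part (i) yields a uniform bound on $v[\rho]$ in the Riemannian norm on $\S \times [0,T)$ (hence also in the ambient Frobenius norm, via \eqref{eqn:length}), while part (ii) gives the ambient Lipschitz estimate ${\|v[\rho](R,t) - v[\rho](Q,t)\|}_F \leq L\, d(R,Q)$ for all $R,Q \in \S$ and $t \in [0,T)$, with the constants depending only on $\e$ and on the potential $K$ -- crucially, neither on $t$ nor on $\rho$.

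The first step is then to transfer these intrinsic/ambient bounds to bounds in an arbitrary chart. Fix a chart $(U, \varphi)$ of $SO(3)$ and a compact set $\Q \subset U \cap \U$. Since $\varphi$ is a smooth diffeomorphism on the compact set $\Q$, the differential $d\varphi$ is bounded and Lipschitz continuous on $\Q$ with constants depending only on $(\varphi, \Q)$, and the three distances at play -- the chart distance $\norm{\varphi(R) - \varphi(Q)}_{\R^\dim}$, the ambient Frobenius distance ${\|R - Q\|}_F$, and the geodesic distance $d(R, Q)$ -- are pairwise comparable on $\Q$ (one direction of the Frobenius--geodesic comparison is \eqref{eqn:ineq-dist}, and the reverse follows from smoothness of the exponential map on the compact subset $\Q \subset \S$, which lies strictly inside the injectivity radius). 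Splitting
\bes
	\varphi_* v[\rho](\tilde R, t) - \varphi_* v[\rho](\tilde Q, t) = d\varphi_R\bigl( v[\rho](R,t) - v[\rho](Q,t)\bigr) + \bigl(d\varphi_R - d\varphi_Q\bigr)\bigl(v[\rho](Q,t)\bigr),
\ees
and estimating the first term by Lemma \ref{lem:dist-flow-maps-K}(ii) and the second by Lemma \ref{lem:dist-flow-maps-K}(i) together with the Lipschitz property of $d\varphi$, one obtains constants $C_{\varphi, \Q}$ and $L_{\varphi, \Q}$, independent of $t \in [0,T)$ and of $\rho$, such that
\bes
\norm{v[\rho](\cdot, t)}_{L^\infty(\varphi, \Q)} \leq C_{\varphi, \Q}, \qquad \norm{v[\rho](\cdot, t)}_{\mathrm{Lip}(\varphi, \Q)} \leq L_{\varphi, \Q}, \qquad \mbox{for all $t \in [0,T)$}.
\ees
This proves that $\{v[\rho](\cdot, t)\}_{t \in [0,T)}$ is locally Lipschitz continuous on charts.

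The integrability requirement \eqref{eq:bdd-Lip} is then immediate: for any compact $\calS \subset [0,T)$,
\bes
\int_\calS \left( \norm{v[\rho](\cdot, t)}_{L^\infty(\varphi, \Q)} + \norm{v[\rho](\cdot, t)}_{\mathrm{Lip}(\varphi, \Q)} \right) \d t \leq |\calS| \bigl( C_{\varphi, \Q} + L_{\varphi, \Q}\bigr) < \infty.
\ees
There is no genuine obstacle in this argument; the only point worth emphasising at the end of the proof -- and one that is used repeatedly later (in Theorem \ref{thm:well-posedness}, where the maximal time of existence of the flow is required to be independent of $\sigma$, and again in the stability analysis of Theorem \ref{thm:stability}) -- is that the bounds $C_{\varphi, \Q}$ and $L_{\varphi, \Q}$ depend on neither the time variable nor on the curve $\rho$.
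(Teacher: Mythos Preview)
Your argument is correct and takes a somewhat different route from the paper. The paper works directly in a single chart $(\U,\psi)$ (using geodesic convexity of $\U$): it first establishes that $V\mapsto \|\nabla K_V\|_{L^\infty(\psi,\Q)}$ and $V\mapsto \|\nabla K_V\|_{\mathrm{Lip}(\psi,\Q)}$ are locally bounded on $\U$ via the smoothness of $(R,V)\mapsto \nabla d_V^2(R)$ and the assumed regularity of $g'$, and only then integrates against $\rho_t$ to obtain the chart bounds on $v[\rho]$. Your approach instead recycles the ambient Frobenius estimates already proved in Lemma \ref{lem:dist-flow-maps-K} and transfers them to an arbitrary chart via smoothness of $\varphi$. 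This is economical --- it avoids redoing the $\nabla K$ estimates in chart coordinates --- at the mild cost of making the Appendix depend on Section \ref{subsect:well-posedness}. Both approaches land on the same important punchline (Remark \ref{rem:indep-max-time}): the chart constants are independent of $t$ and of $\rho$.

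One small technical point: in your splitting $d\varphi_R\bigl(v[\rho](R,t)-v[\rho](Q,t)\bigr) + (d\varphi_R-d\varphi_Q)\bigl(v[\rho](Q,t)\bigr)$, the vector $v[\rho](Q,t)$ lives in $T_Q SO(3)$, not $T_R SO(3)$, so applying $d\varphi_R$ to it requires a word of justification. Since $SO(3)\subset \R^{3\times 3}$, one can extend $\varphi$ smoothly to an open neighbourhood in the ambient space (e.g.\ via a tubular neighbourhood), after which $d\varphi_R$ acts on $\R^{3\times 3}$ and the decomposition is legitimate; alternatively, bound $\|d\varphi_R(v[\rho](R,t)) - d\varphi_Q(v[\rho](Q,t))\|_{\R^\dim}$ directly using Lipschitz continuity of the bundle map $(R,V)\mapsto d\varphi_R(V)$ on the compact $\Q$.
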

\begin{proof}
Let $\Q\subset \U$ be compact.  We first show that the maps $V\mapsto {\|\grad K_V\|}_{L^\infty(\psi,\Q)}$ and $V\mapsto {\|\grad K_V\|}_{\mathrm{Lip}(\psi,\Q)}$ are locally bounded on $\U$. To do this, we will use the fact that the map $(R,Q)\mapsto \grad d^2_Q(R)$ is smooth on $\U\times\U$. Indeed, for all $R\in \Q$ and $Q\in \U$ we get
	\bes
		\norm{\psi_*\grad K_Q(R)}_{\R^\dim} \leq |g'(d(R,Q)^2)| \|\psi_* \grad d^2_Q(R)\|_{\R^\dim} \leq |g'(d(R,Q)^2)| \|\grad d^2_Q\|_{L^\infty(\psi,\Q)},
	\ees
and by the local boundedness of $g'$ and of $Q \mapsto {\|\grad d^2_Q\|}_{L^\infty(\psi,\Q)}$,  we get
\[ Q\mapsto \norm{\grad K_Q}_{L^\infty(\psi,\Q)} \quad \mbox{is locally bounded}. \] 
Also, for all $R,Q \in \Q$ and $V\in\U$ we have
	\begin{align*}
	\begin{aligned}
		& \norm{\psi_*\grad K_V(R) - \psi_*\grad K_V(Q)}_{\R^\dim} \\
		& \hspace{1cm} = \norm{g'(d(R,V)^2) \psi_*\grad d^2_V(R)  -  g'(d(Q,V)^2) \psi_*\grad d^2_V(Q)}_{\R^\dim}\\
		& \hspace{1cm} \leq |g'(d(R,V)^2)| {\|\psi_*\grad d^2_V(R) - \psi_*\grad d^2_V(Q)\|}_{\R^\dim} \\
		& \hspace{1.2cm}+ {\|\psi_*\grad d^2_V(Q)\|}_{\R^\dim} |g'(d(R,V)^2) - g'(d(Q,V)^2)| \\
		& \hspace{1cm} \leq  |g'(d(R,V)^2)| {\|\grad d^2_V\|}_{\mathrm{Lip}(\psi,\Q)}  {\|\psi(R)-\psi(Q)\|}_{\R^\dim}  \\
		& \hspace{1.2cm}+ {\|\grad d^2_V\|}_{L^\infty(\psi,\Q)} |g'(d(R,V)^2) - g'(d(Q,V)^2)|.
	\end{aligned}
	\end{align*}
Using the local Lipschitz continuity and the local boundedness of $g'$ (in fact of $r\mapsto g'(r^2)$), and the local boundedness of the maps $V \mapsto {\|\grad d^2_V\|}_{\mathrm{Lip}(\psi,\Q)}$ and $V \mapsto {\|\grad d^2_V\|}_{L^\infty(\psi,\Q)}$, we conclude 
\[ V \mapsto {\|\grad K_V\|}_{\mathrm{Lip}(\psi,\Q)} \quad \mbox{is locally bounded}. \]

Now let $\Q_t \subset \U$ be a compact set containing $\supp(\rho_t)$ such that $t\mapsto \mathrm{diam}(\Q_t)$ is nondecreasing.  Then, for all $R,Q\in \Q$ and $t\in \calS$, where $\calS\subset [0,T)$ is compact, we have
\be\label{eq:bdd-int-vel}
	\norm{\psi_* \V[\rho](R,t)}_{\R^\dim} \leq \int_{\Q_t} \norm{\psi_*\grad K_Q(R)}_{\R^\dim} \d \rho_t(Q) \leq \sup_{\bar Q\in \Q_s} \norm{\grad K_{\bar Q}}_{L^\infty(\psi,\Q)},
\ee
where $s=\sup(\calS)$, and
\begin{align} 
\begin{aligned}\label{eq:Lip-int-vel}
& \norm{\psi_*\V[\rho](R,t) - \psi_*\V[\rho](Q,t)}_{\R^\dim} \\
& \hspace{1cm} \leq \int_{\Q_t} \norm{\psi_*\grad K_V(R) - \psi_*\grad K_V(Q))}_{\R^\dim} \d \rho_t(V)\\
& \hspace{1cm} \leq \sup_{\bar V \in \Q_s} \norm{\grad K_{\bar V}}_{\text{Lip}(\psi,\Q)} {\|\psi(R)-\psi(Q)\|}_{\R^\dim}.
\end{aligned}
\end{align}
Now, the proof follows from \eqref{eq:bdd-int-vel} and \eqref{eq:Lip-int-vel}.
\end{proof}

\begin{rem}\label{rem:indep-max-time} We make a key observation that since the $L^\infty$ and Lipschitz bounds in \eqref{eq:bdd-int-vel} and \eqref{eq:Lip-int-vel} do not depend on $\rho$, the maximal time of existence of the flow map generated by $v[\rho]$ does not depend on the curve $\rho$. 
\end{rem}

The global version of the Cauchy-Lipschitz theory, Theorem \ref{thm:global-Cauchy-Lip}, also applies to the interaction velocity field when the potential $K$ is purely attractive ($g'\geq 0$).  The results is given by the following lemma.

\begin{lem}\label{lem:interaction-complete-global}
	 Let $K$ satisfy \ref{hyp:K} with $g'\geq 0$, and let $\rho\in \Cont([0,\infty);\P(\U))$ fixed. Let $\Sigma\subset \U$ be compact and such that $\Sigma\subset \overline{D_r(R)} \subset D_{r_\ast}(R) \subset \U$ for some $R\in\U$ and $0<r<r_\ast<\pi/2$. Then, the pair $(\V[\rho],\Sigma)$ satisfies the assumptions of the global Cauchy--Lipschitz theorem \ref{thm:global-Cauchy-Lip} provided $\supp(\rho_t)\subset \overline{D_r(R)}$ for all $t\in[0,\infty)$.
\end{lem}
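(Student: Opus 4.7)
The plan is to verify the three requirements of Theorem \ref{thm:global-Cauchy-Lip} for the pair $(\V[\rho],\Sigma)$ with distinguished point $p=R$ and radii $r<r_*$. The Lipschitz-on-charts and boundedness conditions inherited from Theorem \ref{thm:Cauchy-Lip} are already supplied by Lemma \ref{lem:interaction-complete}, whose bounds are uniform on any compact time window and hence integrable on every compact $\calS\subset[0,\infty)$. The containment $\Sigma\subset\overline{D_r(R)}\subset D_{r_*}(R)\subset\U$ is part of the hypothesis. Thus the only genuine work lies in establishing the inward-pointing condition
\[
\langle -\log_x R,\V[\rho](x,t)\rangle_x\leq 0,\qquad\mbox{for all $x\in D_{r_*}(R)\setminus D_r(R)$ and $t\in[0,\infty)$.}
\]

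Substituting \eqref{eqn:gradK-gen} into \eqref{eqn:v-field} produces the representation
\[
\V[\rho](x,t) = 2\int_\U g'(d(x,Q)^2)\log_x Q\,\d\rho_t(Q),
\]
so, since $g'\geq 0$, it is enough to prove the pointwise inequality
\[
\langle \log_x R,\log_x Q\rangle_x\geq 0
\]
for every $Q\in\supp(\rho_t)\subset\overline{D_r(R)}$ and every $x\in D_{r_*}(R)\setminus D_r(R)$; pairing with $-\log_x R$ then makes the integrand nonpositive at every $Q$.

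The pointwise inequality I would establish by recycling the geometric argument in the proof of Lemma \ref{lem:technical}. Fix $x$ and $Q$ as above and set $\Delta:=d(x,R)\in(r,r_*)\subset(0,\pi/2)$. Since $d(Q,R)\leq r<\Delta$, both $x$ and $Q$ lie inside the closed disk $\overline{D_\Delta(R)}$, which is geodesically convex because its radius is strictly less than the convexity radius $\pi/2$ of $SO(3)$. Let $\gamma\:[0,1]\to SO(3)$ be the unique minimizing geodesic from $x$ to $Q$, normalized so that $\gamma(0)=x$ and $\gamma'(0)=\log_x Q$; by convexity $\gamma(s)\in\overline{D_\Delta(R)}$ for every $s\in[0,1]$. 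Hence the map $s\mapsto d(\gamma(s),R)^2$ attains its maximum on $[0,1]$ at the endpoint $s=0$, and differentiating via the chain rule and \eqref{eqn:gradd} gives
\[
0\geq\left.\frac{\der}{\der s}\right|_{s=0}d(\gamma(s),R)^2 = \nabla d_R^2(x)\cdot \log_x Q = -2\langle \log_x R,\log_x Q\rangle_x,
\]
which delivers the desired sign.

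The main subtlety, rather than a real obstacle, is keeping the auxiliary disk $\overline{D_\Delta(R)}$ inside the convexity radius of $SO(3)$; this is ensured precisely by the hypothesis $r_*<\pi/2$. With the inward-pointing condition in hand, Theorem \ref{thm:global-Cauchy-Lip} then produces both the global flow map generated by $(\V[\rho],\Sigma)$ and the invariance $\Psi_{\V[\rho]}^t(x)\in\overline{D_r(R)}$, which is exactly what is needed for the fixed-point construction in Proposition \ref{prop:inv-cont}.
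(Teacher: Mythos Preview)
Your proof is correct and follows essentially the same approach as the paper: both reduce the inward-pointing condition to the pointwise inequality $\langle \log_x R,\log_x Q\rangle_x\geq 0$ via the sign of $g'$, and both establish this by exploiting the geodesic convexity of the disk $\overline{D_{d(x,R)}(R)}$ (radius less than $\pi/2$) to show that $s\mapsto d(\gamma(s),R)^2$ is nonincreasing at $s=0$ along the geodesic from $x$ to $Q$. Your explicit remark that the Lipschitz/boundedness hypotheses are inherited from Lemma \ref{lem:interaction-complete} is a helpful addition that the paper leaves implicit.
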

\begin{proof} We need to check that $\V[\rho]$ verifies \eqref{eq:attractive-general}. Suppose that $\supp(\rho_t)\subset \overline{D_r(R)}$ for all $t\in[0,\infty)$ and let $Q\in D_{r_\ast}(R)\setminus D_r(R)$. Then, for all $t\in[0,\infty)$ one has
\begin{align}
\begin{aligned} \label{eq:integral-global-Lipschitz-velocity}
& -\log_Q R \cdot \V[\rho](Q,t)  \\
& \hspace{1cm} = \log_Q R \cdot \grad K*\rho_t(Q) = \int_{\overline {D_r(R)}} g'(d(Q,V)^2) \log_Q R \cdot \grad d_V^2(Q) \d \rho_t(V)  \\
&\hspace{1cm}  = -2\int_{\overline {D_r(R)}} g'(d(Q,V)^2) \log_Q R \cdot \log_Q V \d \rho_t(V).
\end{aligned}
\end{align}
Let $V \in \overline{D_r(R)} \subset D_{d(Q,R)}(R)$ be fixed. Then, since $d(Q,R)<r_\ast<\pi/2$, the closed disk $\overline{D_{d(Q,R)}(R)}$ is geodesically convex. Let $\gamma:[0,1]\to \overline{D_{d(Q,R)}(R)}$ be the unique minimizing geodesic connecting $Q$ to $V$. Then, 
\bes
	\left. \frac{\der}{\der t}\right|_{t = 0} d(\gamma(t),R)^2 = \grad d_R^2(\gamma(0)) \cdot \gamma'(0) = -2 \log_Q R \cdot \log_Q V.
\ees
Note that $\left. \frac{\der}{\der t}\right|_{t = 0} d(\gamma(t),R)^2 \leq 0$. Indeed, otherwise there would exist $\tau\in(0,1)$ such that $d(\gamma(\tau),R)>d(\gamma(0),R)=d(Q,R)$, which contradicts $\gamma([0,1])\subset \overline{D_{d(Q,R)}(R)}$ and thus the geodesic convexity of $\overline{D_{d(Q,R)}(R)}$.

We use these considerations in \eqref{eq:integral-global-Lipschitz-velocity}, we get that for attractive potentials ($g' \geq 0$),
\[
-\log_Q R \cdot \V[\rho](Q,t) \leq 0, \quad \text{ for all } Q\in D_{r_\ast}(R)\setminus D_r(R) \text{ and } t\in[0,\infty),
\] 
which is the required condition in Theorem \ref{thm:global-Cauchy-Lip}.
\end{proof}

%%%%%
\subsection{Some linear algebra results}
\label{subsect:A-matrices}
The following technical lemmas are used in the proofs of some of the main results.

\begin{lem}\label{L1}
For any two matrices $A,B \in \R^{\dim \times \dim}$, one has
\[
{\|AB\|}_F\leq {\|A\|}_F {\|B\|}_F.
\]
\end{lem}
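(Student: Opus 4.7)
The plan is to exploit the entrywise definition of the Frobenius norm together with the Cauchy--Schwarz inequality applied row-by-column. Concretely, I would write $\|AB\|_F^2 = \sum_{i,j} (AB)_{ij}^2$ and expand each entry as $(AB)_{ij} = \sum_k A_{ik} B_{kj}$. Viewing this sum as the inner product of the $i$-th row of $A$ with the $j$-th column of $B$, I would apply Cauchy--Schwarz in $\R^n$ to obtain
\[
(AB)_{ij}^2 \leq \Bigl(\sum_k A_{ik}^2\Bigr)\Bigl(\sum_k B_{kj}^2\Bigr).
\]

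The next step is to sum this bound over all pairs $(i,j)$ and observe that the right-hand side factors:
\[
\sum_{i,j} \Bigl(\sum_k A_{ik}^2\Bigr)\Bigl(\sum_k B_{kj}^2\Bigr) = \Bigl(\sum_{i,k} A_{ik}^2\Bigr)\Bigl(\sum_{j,k} B_{kj}^2\Bigr) = \|A\|_F^2 \, \|B\|_F^2.
\]
Taking square roots yields the submultiplicativity $\|AB\|_F \leq \|A\|_F \|B\|_F$.

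An equivalent and arguably cleaner route would use the trace characterization $\|A\|_F^2 = \mt{tr}(A^T A)$ together with the fact that $\mt{tr}(M^T M) = \sum_i \|M e_i\|_2^2$, reducing the inequality to the operator-norm bound $\|A x\|_2 \leq \|A\|_F \|x\|_2$ applied column-wise. There is no real obstacle here; the result is standard and the only subtlety is to make sure indices are summed in the correct order when factoring the double sum, so that the Cauchy--Schwarz estimate decouples the rows of $A$ from the columns of $B$ cleanly.
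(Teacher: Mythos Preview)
Your proposal is correct and follows essentially the same approach as the paper: expand $\|AB\|_F^2$ entrywise, apply Cauchy--Schwarz to each entry $(AB)_{ij} = \sum_k A_{ik}B_{kj}$, and then factor the resulting double sum into $\|A\|_F^2\|B\|_F^2$. The trace-based alternative you mention is a minor cosmetic variant of the same argument.
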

\begin{proof}
We use the definition of Frobenius nrom and the Cauchy-Schwarz inequality to find
\begin{align*}
{\|AB\|}_F^2&=\sum_{i, j=1}^n[AB]_{ij}^2=\sum_{i, j=1}^n\left(\sum_{k=1}^n[A]_{ik}[B]_{kj}\right)^2\\
&\leq\sum_{i,j=1}^n\left(\sum_{k=1}^n[A]_{ik}^2\right)\left(\sum_{k=1}^n[B]_{kj}^2\right)={\|A\|}_F^2 {\|B\|}_F^2.
\end{align*}
\end{proof}

\begin{lem}
\label{lem:l-bound}
Let $R \in SO(3)$ be a rotation matrix with $0\leq d(I,R)<\pi/2$ and $V \in \R^{3 \times 3}$. Then the following inequality holds:
\[
\mathrm{tr}(RVV^T)\geq \cos d(I,R)\cdot\|V\|_{F}^2.
\]
\end{lem}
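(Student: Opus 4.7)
The plan is to reduce the inequality to a statement about the symmetric part of $R$ and then exploit the spectral decomposition of a rotation matrix.

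First, I would split $R$ into its symmetric and skew-symmetric parts, $R = \tfrac{1}{2}(R+R^T) + \tfrac{1}{2}(R-R^T)$. Since $VV^T$ is symmetric and the trace of the product of a skew-symmetric matrix with a symmetric matrix vanishes (by the cyclic property of the trace together with transpose), one obtains
\[
\mathrm{tr}(RVV^T) = \mathrm{tr}\!\left( \tfrac{1}{2}(R+R^T)\, VV^T \right).
\]

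Next, I would use the standard spectral structure of a rotation in $SO(3)$: the eigenvalues of $R$ are $1$, $e^{i\theta}$, $e^{-i\theta}$ with $\theta = d(I,R)$, so the symmetric part $\tfrac{1}{2}(R+R^T)$ is orthogonally diagonalizable with eigenvalues $1$, $\cos\theta$, $\cos\theta$. Since $0 \le \theta < \pi/2$, its smallest eigenvalue is $\cos\theta \ge 0$.

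Finally, for any symmetric matrix $N$ with smallest eigenvalue $\lambda_{\min}$ and any positive semidefinite matrix $M$, one has $\mathrm{tr}(NM) \ge \lambda_{\min}\, \mathrm{tr}(M)$. This follows by writing $N = \sum_k \lambda_k q_k q_k^T$ in an orthonormal eigenbasis, noting $q_k^T M q_k \ge 0$, and bounding
\[
\mathrm{tr}(NM) = \sum_k \lambda_k\, q_k^T M q_k \;\ge\; \lambda_{\min} \sum_k q_k^T M q_k = \lambda_{\min}\, \mathrm{tr}(M).
\]
Applying this with $N = \tfrac{1}{2}(R+R^T)$ and $M = VV^T$, and recalling $\mathrm{tr}(VV^T) = \|V\|_F^2$, yields the claimed inequality. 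No step should be a serious obstacle; the only subtlety worth emphasizing is the hypothesis $d(I,R) < \pi/2$, which guarantees $\cos\theta \ge 0$ so that applying the minimum-eigenvalue bound to the positive semidefinite matrix $VV^T$ produces a meaningful (non-vacuous) lower bound.
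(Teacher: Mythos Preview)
Your proof is correct and follows essentially the same approach as the paper's: both eliminate the skew-symmetric part of $R$ (which pairs to zero against the symmetric matrix $VV^T$) and then use that the symmetric part $\tfrac12(R+R^T)$ has minimum eigenvalue $\cos\theta$. The paper carries this out by writing $R$ in its canonical block form and decomposing the block as $\cos\theta\, I + (1-\cos\theta)E_{11} + \text{(skew)}$, while you package the same observation via the general inequality $\mathrm{tr}(NM)\ge \lambda_{\min}(N)\,\mathrm{tr}(M)$ for $M$ positive semidefinite---a slightly cleaner formulation of the identical idea.
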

\begin{proof}
Denote $d(I,R)=\theta$. One can find a basis and a matrix $Q$ such that
\[
R=Q\begin{bmatrix}
1&0&0\\
0&\cos\theta&-\sin\theta\\
0&\sin\theta&\cos\theta
\end{bmatrix}Q^T.
\]
Then we have
\begin{align}
\label{eqn:tr-3m}
\mathrm{tr}(RVV^T)&=\mathrm{tr}\left(
\begin{bmatrix}
1&0&0\\
0&\cos\theta&-\sin\theta\\
0&\sin\theta&\cos\theta
\end{bmatrix}Q^TVV^TQ
\right)   \\
&=\mathrm{tr}\left(
\begin{bmatrix}
1&0&0\\
0&\cos\theta&-\sin\theta\\
0&\sin\theta&\cos\theta
\end{bmatrix}(Q^TV)(Q^TV)^T
\right).
\end{align}
On the other hand, the first term inside the trace can be rewritten as 
\begin{equation}
\label{eqn:decomp}
\begin{bmatrix}
1&0&0\\
0&\cos\theta&-\sin\theta\\
0&\sin\theta&\cos\theta
\end{bmatrix} = \cos \theta
\begin{bmatrix}
1&0&0\\
0&1&0\\
0&0&1
\end{bmatrix} + (1-\cos \theta)
\begin{bmatrix}
1&0&0\\
0&0&0\\
0&0&0
\end{bmatrix} +
\begin{bmatrix}
0&0&0\\
0&0&-\sin\theta\\
0&\sin\theta&0
\end{bmatrix}.
\end{equation}
First note that the third matrix on the right-hand-side above is skew-symmetric and its contribution in \eqref{eqn:tr-3m} is zero. Indeed, for any skew-symmetric matrix $A$ and any matrix $M$, $\mathrm{tr}(AMM^T) =0$, as
\begin{align*}
\mathrm{tr}(AMM^T)=\mathrm{tr}((AMM^T)^T)=\mathrm{tr}(MM^TA^T)=\mathrm{tr}(A^TMM^T)=-\mathrm{tr}(AMM^T).
\end{align*}

The contribution of the second term in the right-hand-side of \eqref{eqn:decomp} is non-negative. Indeed, let $[Q^TV]_{ij}=b_{ij}$, and compute:
\[
\mathrm{tr}\left(\begin{bmatrix}
1&0&0\\
0&0&0\\
0&0&0
\end{bmatrix}(Q^TV)(Q^TV)^T\right)=\sum_{i, j, k}\left(\begin{bmatrix}
1&0&0\\
0&0&0\\
0&0&0
\end{bmatrix}_{ij}[Q^TV]_{jk}[(Q^TV)^T]_{ki}\right)=\sum_{k}b_{1k}^2\geq0.
\]
Hence, it follows from \eqref{eqn:decomp} that
\[
\mathrm{tr}(RVV^T) \geq  \cos \theta \, \mathrm{tr} \left( (Q^TV)(Q^TV)^T\right) \geq\cos\theta\|Q^TV\|_F^2=\cos\theta\|V\|_F^2.
\]
This proves the claim.
\end{proof}
\end{appendix}

%%%%%%%%%%

{\thanks{\textbf{Acknowledgments.}  R.F. was supported by NSERC Discovery Grant PIN-341834 during this research. The work of S.-Y. Ha was supported by National Research Foundation of Korea (NRF-2020R1A2C3A01003881), and the work of H. Park was supported by Basic Science Research Program through the National Research Foundation of Korea, funded by the Ministry of Education (2019R1I1A1A01059585). The initial part of the work in this paper was done while H. Park visited Simon Fraser University, a visit supported through a combination of the above mentioned grants.}

%%%%%%%%%%

\def\cprime{$'$}

\end{document}